\newcommand {\nc} {\newcommand} 
\newcommand {\enm} {\ensuremath}
\def \d{\delta}
\def \uuu{u}
\nc {\bdm} {\begin{displaymath}}
	\nc {\edm} {\end{displaymath}}
\numberwithin{equation}{section}
\definecolor{labelkey}{rgb}{1,0,0}
\newtheorem{theorem}{Theorem}[section]
\theoremstyle{plain} % just in case the style had changed
\theoremstyle{plain} % just in case the style had changed
\newcommand{\thistheoremname}{}
\newtheorem{genericthm}[theorem]{\thistheforemname}
\newtheorem*{genericthm*}{\thistheoremname}
\newenvironment{namedthm*}[1]
{\renewcommand{\thistheoremname}{#1}%
	\begin{genericthm*}}
	{\end{genericthm*}}
\newtheorem{lemma}[theorem]{Lemma}
\newtheorem{corollary}[theorem]{Corollary}
\newtheorem*{definition*}{Definition}
\newtheorem*{conjecture*}{Conjecture}
\newtheorem*{theorem*}{Theorem}
\newtheorem*{remark*}{Remark}
\newtheorem*{lemma*}{Lemma}
\numberwithin{equation}{section}
\DeclareMathOperator{\Li}{Li}
\let\@@pmod\pmod
\DeclareRobustCommand{\pmod}{\@ifstar\@pmods\@@pmod}
\def\@pmods#1{\mkern4mu({\operator@font mod}\mkern 6mu#1)}
\renewcommand{\Re}{\mathrm{Re}}
\nc {\form}[1] {\enm{\mbox{\underline{for}}}_{#1}}
\nc {\prol}[1] {\enm{\mbox{\underline{prol}}_{{#1}^*}}}
\nc {\stk} {\stackrel}
\nc{\beqar}{\begin{eqnarray*}}
	\nc{\eeqar}{\end{eqnarray*}}
\newcommand{\Pn}[2] {\ensuremath{ {\mathbb{P}}^{#1}_{#2}}}
\nc{\Quot}[3]{\enm{ {\mathfrak{Quot}_{ {#1}/{#2}/{#3}}}}}
\nc{\Hilb}[2]{\enm{ {\mathfrak{Hilb}_{ {#1}/{#2}}}}}
\newcommand{\bb}[1]{\mathbb{#1}}
\nc {\Coh}[4] {\ensuremath{H^{#1}(\Pn{#2}{},{#3}({#4}))}}
\nc {\Ch}[3] {\enm{H^{#1}(X_t,{#2}_t({#3}))}}
\nc {\Qphi}[4]{\enm{ {\mathfrak{Quot}^{~#4}_{ {#1}/{#2}/{#3}}}}}
\nc {\Gra}[4]{\enm{ {\mathfrak{Grass}_{#2}({#3},{#4})}}}
\nc {\HomA}[2]{\enm{\mathrm{Hom}_A{#1}{#2}}}
\nc {\tr}{\mathrm{tr}}
\nc {\C}[2]{\binom{#1}{#2}}
\def \mb{\mbox}
   \def \h{\hat{\ }}
  \def \bX{{\bf X}} \def \bH{{\bf H}}
   \def \k{\kappa}
\newcommand{\F}{{\mathbb{F}}}
\def \R1{R((q))[q']\h}
\def \d{\text{deg}}
\def\xi{\chi}
\newcommand{\oldmarginpar}[1]{}
\nc{\bx}{{\mathbf x}}
\nc{\by}{\bold{y}}
\nc{\bz}{\bold{z}}
\nc{\ba}{\bold{a}}
\nc{\Fp}{\tilde{F}}
\nc{\Rp}{\tilde{R}}
\nc{\mlow}{m_{\mathrm{l}}}
\nc{\mup}{m_{\mathrm{u}}}
\nc{\ord}{\mb{ord }}
\nc{\bXp}{\bX_{\mathrm{prim}}}
\nc{\bPsi}{\bold{\Psi}}
\nc{\mult}{\mathrm{mult}}
\nc{\mbB}{\mathbbm{B}}
\nc{\mfor}[1]{{#1}^{\mathrm{for}}}
\nc{\Hdr}{\bH^1_{\mathrm{dR}}(A)}
\nc{\ale}{\textcolor{red}}
\nc{\emma}{\textcolor{orange}}
\nc{\bluev}{\textcolor{blue}}
\nc{\mP}{\mathfrak{p}}
\nc{\prolarrow}[1]{\stk{(\uuu_{#1},\d_{#1})}{\rightarrow}}
\nc{\tc}{\textcolor}
\nc{\nexp}[1]{\exp_{\d,{#1}}}
\nc{\gr}{\mathrm{Gr}}
\nc{\tJ}[1]{{{\mathcal{J}}^n{#1}}} %Buium algebraic functor
\nc{\Sn}{S^{(n)}}
\nc{\Snr}[1]{S^{(n)}_{#1}}
\nc{\Sp}[1]{{\mathbf{Sp}_{#1}}}
\nc{\Aff}[1]{{\mathbf{Aff}_{#1}}}
\nc{\Sch}[1]{{\mathbf{Sch}_{#1}}}
\nc{\undef}{{\color{red} (Undefined)}}
\nc{\ov}[1]{{\overline{#1}}}
\nc{\Qbar}{\overline{\bb{Q}}}
\nc{\Aut}{\mathrm{Aut}}
\newcommand{\Tr}{\mathrm{Tr}}
\title[{An induction principle for the Bombieri-Vinogradov theorem over $\F_q[t]$}]{An induction principle for the Bombieri-Vinogradov theorem over $\F_q[t]$ and a variant of the
	Titchmarsh divisor problem.}
\author{Sampa Dey}
\address{Department of Mathematics, 
	Indian Institute of Technology Gandhinagar, 
	Gandhinagar, 
	Gujarat 382355, India}
\email{sampa.math@gmail.com}
\author[Aditi Savalia]{Aditi Savalia}
\address{Department of Mathematics, 
	Indian Institute of Technology Gandhinagar, 
	Gandhinagar, 
	Gujarat 382355, India}
\email{aditiben.s@iitgn.ac.in}
\begin{document}
	\subjclass[2010]{Primary 11N37; Secondary 11T55, 11N36}
	%\\
	\thanks{\textit{Keywords and phrases.}  finite fields, function fields, divisor function, Bombieri-Vinogradov theorem, large sieve inequality, Titchmarsh divisor problem}

	\begin{abstract} Let $\F_q[t]$ be the polynomial ring over the finite field $\F_{q}$. For arithmetic functions $\psi_{1}, \psi_{2}:\F_{q}[t]\rightarrow\mathbb{C}$, we establish that if a Bombieri-Vinogradov type equidistribution result holds for $\psi_{1}$ and $\psi_{2}$, then it also holds for their Dirichlet convolution $\psi_{1}\ast \psi_{2}$. As an application of this, we resolve a version of the Titchmarsh divisor problem in $\F_{q}[t]$. More precisely, we obtain an asymptotic for the average behaviour of the divisor function over shifted products of two primes in $\F_q[t]$.

	\end{abstract}
	\maketitle

%	\section{Notation}
%	\label{sec:notation}
%	For polynomials $f, g \in \F_{q}[t]$, we denote $(f,g)$ to be the greatest common divisor of $f$ and $g$. %For an ideal $(f)$ generated by $f$, we simply put $\d(f)=\d((f))$. If $f$ and $g$ are in $A$, we let $(f,g)$ be the ideal in $A$ that is the greatest common divisor of the ideals $(f)$ and $(g)$. 
%	We write simply $(f,g)=1$ if $(f,g)=(1)$. 
%	We denote a multiplicative character on $\mathcal{A}$ by $\chi$. A
%	character $\chi \pmod f$ is called a primitive character if there is no divisor $d$ of $f$ such that $\chi$ is induced
%	by a character modulo $d$. Let $\chi_0$ be a principal character  and $\chi$ be any character mondulo $f$. Also, let $\chi_1\pmod d$ be a primitive character such that $\chi=\chi_1\chi_0$. Then we call $d$ to be the conductor of $\chi$. 
%	
%	We will let $f$, $h$, $h$, $m$, $l$ denote monic polynomials in $\F_{q}[t]$. For a polynomial $f$, we define $|f|:=q^{\d (f)}$ (one can verify that it satisfies all properties of norm). Throughout the article $\log \alpha$ represents $\log_q \alpha$, that is logarithm with respect to the base $q$ .
%	
%	We will use the function $\tau_{k}(f)$ to denote the number of ways of writing $f$ as a product of $k$ monic polynomials. When $k=2$, the function $\tau_{2}(f)$ counts the number of monic polynomials dividing $f$. For simplicity, we will use the notation $\tau(f)$ in place of $\tau_{2}(f)$.
%	

	\section{Introduction}
	\label{sec:intro}
	
	The Bombieri-Vinogradov theorem is one of the most celebrated theorems in analytic number theory, concerned with the equidistribution property of primes in arithmetic progressions. To articulate the theorem precisely, we first set up some notation. Let $a$ and $d$ be coprime integers. Denote
	\begin{equation*}
		%\label{notation: pi in AP}
		\pi (x; d, a):= \# \left\lbrace 
		p \leq x: p \equiv a \pmod* d
		\right\rbrace, 
	\end{equation*}  
	where $p$ represents a prime number. Assuming the Generalized Riemann Hypothesis (GRH) for Dirichlet $L$-functions, one can get that for any $d \leq x$, 
	\[
	\pi (x; d, a) = \frac{1}{\phi(d)} \Li (x) 
	+ O
	\left( 
	x^{1/2} \log (x)
	\right) , 
	\] 
	where $\Li(x)$ is the usual logarithmic integral
	\begin{align*}
		\Li(x):=\int\limits_{2}^{x}\frac{dt}{\log t}.
	\end{align*}
	%and the implied constant is independent of $d$.  
	An unconditional result in this context is the well-known Siegel-Walfisz theorem which asserts that for any $N>0$, there exists $c(N)>0$ such that, if $d \leq (\log x)^{N}$,
	\[
	\pi (x; d, a) = \frac{1}{\phi(d)} \Li (x) 
	+ O
	\left( 
	x \exp
	\left(-c(N)(\log x)^{1/2} \right)
	\right),  
	\]
uniformly in $d$. In other words, a non-trivial upper bound on the error term
	\[
	E(x; d, a):=
	\pi (x; d, a) - \frac{1}{\phi(d)} \Li (x) 
	\]
	is unconditionally known in the range $d \leq (\log x)^{N}$, for any $N>0$. 
	In 1965, Bombieri and Vinogradov independently proved that for any $A>0$, there exists $B=B(A)>0$ such that
		\begin{equation}
			\sum
			\limits_{d\leq \frac{x^{1/2}}{(\log x)^{B}}} \max_{(a,d)=1}\max_{y\le x} 
			\big | E (y; d, a)\big |
			\ll_{A}\frac{x}{(\log x)^{A}}.\label{BV for f}
		\end{equation}		
 More generally, an arithmetic function $f$ is said to have \textit{level of distribution} $\theta$, if for any $A>0$, there exists $B=B(A)>0$ such that
\begin{equation}
	\sum
	\limits_{d\leq \frac{x^{\theta}}{(\log x)^{B}}} \max_{(a,d)=1}\max_{y\le x} 
	\bigg | 
	\sum\limits_{\substack{n\leq y\\n \equiv a \pmod*{d}}}
	f(n)
	-
	\frac{1}{\phi(d)}
	\sum\limits_{\substack{n\leq y\\ (n,d)=1}}
	f(n)
	\bigg |
	\ll_{A}\frac{x}{(\log x)^{A}}. \label{BV for f1}
\end{equation}	
	
	The Bombieri- Vinogradov theorem asserts that the level of distribution for the prime indicator function is $\theta = \frac{1}{2}$. Further, it was conjectured by Elliott and Halberstam that the bound \eqref{BV for f1} holds for all $0<\theta<1$ for this function. It is worth noting that Bombieri-Vinogradov theorem yields a bound as strong as what would follow from GRH for Dirichlet $L$-functions. Pushing the level of distribution beyond half has been an active area of research, resulting in important contributions due to Fouvry and Iwaniec \cite{FuIw} \cite{FoIwII} , Bombieri, Friedlander and Iwaniec  \cite{primesinAP_BFI}, \cite{primesinAPII_BFI}, \cite{primesinAPIII_BFI}, Zhang \cite{Zhang}, Maynard \cite{maynard2020primes}, \cite{maynard2020primesII}, \cite{maynard2020primesIII}, Granville and Shao  \cite{GAS} and many others.

	In 1976, Y. Motohashi \cite{Mo} proved an interesting induction principle for the Bombieri-Vinogradov theorem. For any two arithmetic functions $f$ and $g$, their multiplicative convolution or \textit{Dirichlet product} is defined as
	\[	f\ast g (n)=\sum\limits_{ab=n} f(a)g(b).\] For an arithmetic function $f$, we consider the following three properties:
	\begin{itemize}
		\item[(a)] $f(n)=O(\tau(n)^{C})$ for some fixed $C>0$.
		\item[(b)] Let $\chi$ be a non-principal character modulo $d$ such that the conductor of $\chi$ is of order $O((\log x)^{D})$, for $D>0$ suitably large. Then 
		\[
		\sum\limits_{n\leq x} f(n) \chi (n) 
		=
		O \left( 
		\frac{x}{(\log x)^{3D}}
		\right). 
		\]
		\item[(c)] The function $f$ satisfies the Bombieri-Vinogradov type equidistribution property, that is, \eqref{BV for f1} holds with $\theta=\frac{1}{2}$. %for any $A>0$, there exist $B=B(A)>0$ such that
%		\begin{equation*}
%			\sum
%			\limits_{d\leq \frac{x^{1/2}}{(\log x)^{B}}} \max_{(a,d)=1}\max_{y\le x} 
%			\big | E(y; d, a; f) \big |
%			\ll_{A}\frac{x}{(\log x)^{A}},
%		\end{equation*}		
%		where
%		\begin{equation*}
%			E(y; d, a; f):= \sum\limits_{\substack{n\leq y\\n\equiv a\pmod*{d}}} f(n)
%			-
%			\frac{1}{\phi(d)}
%			\sum\limits_{\substack{n\leq y\\(n,d)=1}} f(n).
%		\end{equation*}
	\end{itemize}
	
	%	The arithmetic functions are of the order of some power of the divisor function,
	%	they satisfy the Siegel-Walfisz conditions, and
	%	they satisfy the Bombieri-Vinogradov type equidistribution properties,
	%	then their multiplicative convolution also satisfies all three conditions.
	
	%On multiplication convolution Motohashi formulate the following generalization:
	Motohashi proved that if $f$ and $g$ satisfy properties $(a)$, $(b)$ and $(c)$, then their multiplicative convolution $f \ast g$ also satisfies these three properties.
	%	This generalization is significant in terms of improvement of various old results and also establishes some interesting new results 
	%	
	%	
	%	
	%	 This generalization is significant in terms of improvement of various old results such as to prove the level of distribution beyond half in the classical Bombieri-Vinogradov theorem. Also it establishes some interesting new results and Motohashi showed one such application on the additive divisor problem. 
%	Recently, Darbar and Mukhopadhyay \cite{DaMu} extended Motohashi’s work to imaginary quadratic number fields. In the present paper, we prove an analogous result on the polynomial ring over finite fields. 

	Recently, Darbar and Mukhopadhyay \cite{DaMu} generalized Motohashi's result to imaginary quadratic fields. In this paper, we establish an analogous induction principle for equidistribution in arithmetic progressions, in the setting of $\F_{q}[t]$. While it is true that the Riemann hypothesis is known over finite fields, theorems of Bombieri-Vinogradov type are relevant as they give information about equidistribution in arithmetic progressions for  a variety of functions. For instance, our main result allows us to prove equidistribution in arithmetic progressions for almost primes as well as the $k$-fold divisor function.  Pushing the level of distribution beyond half remains an important question in the function field setting as well as demonstrated by recent work  due to Sawin  \cite{sawin_Duke},   \cite[Theorem 1.2]{sawin2021square},  as well as   Sawin  and Shusterman \cite[Theorem 1.7]{sawin2018chowla}.

	We proceed to state our main result below after setting up relevant notation. 
	Let $\F_{q}$ be a finite field of order $q$ and $\F_{q}[t]$ be the polynomial ring defined over $\F_{q}$. We denote the degree of a polynomial $f$ in $\F_{q}[t]$ by $\d(f)$, and define the norm of a polynomial $|f|$ as $q^{\d(f)}$. Throughout the article, we consider $f,g, h$ to be monic polynomials in $\F_q[t]$. Let $\tau_{k}(f)$ denote the $k$-fold divisor function which counts the number of ways to write $f$ as a product of $k$ \textit{monic} polynomials. When $k=2$, the function $\tau_{2}(f)$ is the usual divisor function $\tau(f)$ which counts the number of monic polynomials dividing $f$. 
	
	Let $m$ be a non-constant polynomial in $\F_{q}[t]$. We may denote the ideal $(m)$ by $m$ without mentioning this explicitly if the usage is clear from the context. A multiplicative character  $\chi$ modulo $m$ is a complex valued multiplicative function on $\left(\F_{q}[t]/(m)\right)^\ast$. We extend $\chi$ to $\F_{q}[t]$ by putting $\chi(f)=\chi(\overline{f})$, where $f\equiv \overline{f}\pmod m$, and zero otherwise. A principal character $\chi_0$ mod $m$ is defined by the property that $\chi_0(f)=1$ if $(f,m)=1$ and $0$ otherwise.  A character $\chi$  mod $m$ is called primitive multiplicative character if there is no $d|m$ such that $(d)\neq (m)$ and  $\chi$ is induced by a character mod $d$. For a character $\chi$ modulo $m$, a monic polynomial $d|m$ is called the conductor of $\chi$ if there is no $d'|d$ such that $\chi$ is induced by a primitive character mod $d'$. For more details we refer the reader to \cite[ch. 4]{Ro} and \cite{Jo}.
	
	For arithmetic functions $
	\psi_1,\psi_2: \F_{q}[t]\rightarrow \mathbb{C}$, their multiplicative convolution denoted by $\psi_1\ast \psi_2$ is defined as
	\begin{align*}
	\psi_1\ast \psi_2(f)=\sum_{gh=f}\psi_1(g)\psi_2(h).
	\end{align*} 
	For an arithmetic function $\psi: \F_{q}[t]\rightarrow \mathbb{C}$, consider the following properties:
	\begin{enumerate}
		\item\label{cond:1}(growth condition) $\psi(f)=O(\tau(f)^{C})$
		for some $C>0$. 
		\item\label{cond:2} (Siegel-Walfisz bound)  For a non-principal character $\chi$ with conductor of  degree $\leq D\log N$, we have
		\[
		\sum\limits_{\deg(f) = N}
		\psi(f)\chi(f)
		=
		O\left( 
		\frac{q^N}{ N^{3D}}
		\right),
		\] 
		for $D>0$ sufficiently large.		
		\item\label{cond:3} (Bombieri-Vinogradov type equidistribution property) For $m, \ell \in \F_{q}[t]$ with $m$ monic, and $(m,\ell)=1$, let
		\begin{equation}
		\label{def:error}
		E(M;m, l; \psi)
		:=
		\sum\limits_{       \substack    { f \equiv l \pmod* {m}\\ \deg(f) = M    }      }
		\psi(f)
		-
		\frac{1}{\phi(m)}
		\sum\limits_{  \substack   {  (f,m)=1 \\  \deg(f) = M  }    }
		\psi(f).
		\end{equation}
		Then for any $A>0$, there exists $B=B(A)>0$ such that
		\begin{equation*}
		\sum\limits_{    \deg (m)   
			\leq \frac{N}{2}-B\log N%\frac{q^{N/2}}{ N^{B}}     
		}
		\max\limits_{M\leq N}
		\max\limits_{(m, l)=1}
		|E(M;m,l;\psi)|\ll_{A}
		\frac{q^{N}}{N^{A}}.
		\end{equation*}		
	\end{enumerate}

	\begin{theorem}
		\label{mainthm} Fix $q$. Let $\psi_{1}$, $\psi_{2}$ be arithmetic functions on $\F_{q}[t]$. If $\psi_{1}$ and $\psi_{2}$ have the properties \eqref{cond:1}, \eqref{cond:2} and \eqref{cond:3}, then the Dirichlet convolution $\psi_{1}\ast \psi_{2}$ also satisfies \eqref{cond:1}, \eqref{cond:2} and \eqref{cond:3}.
	\end{theorem}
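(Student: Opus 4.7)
Property \eqref{cond:1} for $\psi_1\ast\psi_2$ is immediate from the submultiplicativity of the divisor function: if $|\psi_i(f)|\ll\tau(f)^{C_i}$ then
\[
|(\psi_1\ast\psi_2)(f)|\ll\sum_{gh=f}\tau(g)^{C_1}\tau(h)^{C_2}\ll\tau(f)^{C_1+C_2+1}.
\]
For property \eqref{cond:2}, complete multiplicativity of $\chi$ yields the factorisation
\[
\sum_{\deg f = N}(\psi_1\ast\psi_2)(f)\,\chi(f) \;=\; \sum_{k=0}^{N} T_1^\chi(k)\,T_2^\chi(N-k),\qquad T_j^\chi(k):=\sum_{\deg g=k}\psi_j(g)\chi(g).
\]
Splitting the outer sum at $k=N/2$, I apply the Siegel--Walfisz bound of \eqref{cond:2} for the relevant $\psi_j$ to whichever factor has index $\ge N/2$, and the trivial bound $|T_j^\chi(k)|\ll q^k k^{O(1)}$ coming from \eqref{cond:1} to the short factor. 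Enlarging the Siegel--Walfisz exponent by an absolute constant then delivers the $O(q^N/N^{3D})$ required.

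The main work is property \eqref{cond:3}. Fix a cutoff $X=c\log N$ with $c>0$ small (to be chosen), and write $\psi_j^{\le X}(f):=\psi_j(f)\mathbf{1}_{\deg f\le X}$ and $\psi_j^{>X}:=\psi_j-\psi_j^{\le X}$. I decompose
\[
\psi_1\ast\psi_2 \;=\; (\psi_1^{\le X}\ast\psi_2) \;+\; (\psi_1^{>X}\ast\psi_2^{\le X}) \;+\; (\psi_1^{>X}\ast\psi_2^{>X}).
\]
The first summand unfolds by parametrising in $g$:
\[
E(M;m,l;\psi_1^{\le X}\ast\psi_2) \;=\; \sum_{\substack{(g,m)=1\\ \deg g\le X}}\psi_1(g)\,E(M-\deg g;\,m,\,lg^{-1};\,\psi_2).
\]
Interchanging the sums over $m$ and $g$, invoking property \eqref{cond:3} for $\psi_2$ (whose BV exponent I take $\le B$), and bounding $\sum_{\deg g\le X}|\psi_1(g)|\ll q^X X^{O(1)}$ via \eqref{cond:1}, produces an estimate of the shape $O(q^X X^{O(1)}\cdot q^N/N^{A_2})$, which is $\ll q^N/N^A$ as soon as $c$ is small compared to $(A_2-A)/\log q$. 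The second summand is treated symmetrically by parametrising in $h$ and invoking BV for $\psi_1$.

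The third summand --- the ``both-large'' piece with $\deg g,\deg h > X$ --- is the main obstacle, since the parametrise-and-swap trick fails: neither variable is short enough for the BV range of the other $\psi_j$ to cover moduli of degree $\le N/2-B\log N$. My plan is to expand in Dirichlet characters modulo $m$,
\[
E(M;m,l;\psi_1^{>X}\ast\psi_2^{>X}) = \frac{1}{\phi(m)}\sum_{\chi\ne\chi_0\pmod{m}}\overline{\chi(l)}\sum_{X<k<M-X} T_1^\chi(k)\,T_2^\chi(M-k),
\]
regroup the characters by their primitive inducing character, and apply Cauchy--Schwarz together with the large-sieve inequality for Dirichlet characters over $\F_q[t]$ to the factors $T_j^\chi(k)$. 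Using $\sum_{\deg f=k}|\psi_j(f)|^2\ll q^k k^{O(1)}$ from \eqref{cond:1}, the large sieve delivers $\sum_{\deg m\le D}\phi(m)^{-1}\sum_\chi|T_j^\chi(k)|^2 \ll (1+q^{k-D})q^k k^{O(1)}$, and summing the Cauchy--Schwarz estimate over $X<k<M-X$ with $D=N/2-B\log N$ produces a bound that is smaller than $q^N/N^A$ by a positive power of $q^N$. The delicate part of the argument is the bookkeeping: calibrating the cutoff parameter $X$ along with the individual Siegel--Walfisz/BV exponents of $\psi_1$ and $\psi_2$ so that all three summands simultaneously beat $q^N/N^A$ uniformly over $\deg m\le N/2-B\log N$.
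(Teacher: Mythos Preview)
Your treatment of property \eqref{cond:1}, of property \eqref{cond:2}, and of the two ``one-short-variable'' pieces $\psi_1^{\le X}\ast\psi_2$ and $\psi_1^{>X}\ast\psi_2^{\le X}$ in property \eqref{cond:3} is fine and matches the paper's $S_1,S_3$. The gap is in the ``both-large'' piece.

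Write $Q=N/2-B\log N$. After reducing to primitive characters and applying Cauchy--Schwarz for a fixed $k$, the relevant large-sieve bound (summing Theorem~\ref{large sieve inequality} dyadically over $\deg m_1\le Q$) is
\[
\sum_{\deg m_1\le Q}\frac{1}{\phi(m_1)}\sideset{}{^\ast}\sum_{\chi}|T_j^\chi(k)|^2\;\ll\;(q^k+q^Q)\cdot q^k\,k^{O(1)},
\]
not the $(1+q^{k-D})q^k$ you wrote. The Cauchy--Schwarz product for a single $k$ is then
\[
\bigl((q^k+q^Q)q^k\bigr)^{1/2}\bigl((q^{M-k}+q^Q)q^{M-k}\bigr)^{1/2}\,N^{O(1)}.
\]
Take $k$ near $X=c\log N$ (which is allowed since you only impose $k>X$). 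Then $M-k\approx N$, so the second bracket is $\asymp q^{2N}$ and its square root is $\asymp q^N$; the first bracket contributes a further $q^{N/4}$. The product is $\asymp q^{5N/4}N^{O(1)}$, already larger than $q^N$, so summing over $k$ cannot recover anything---certainly not the ``positive power of $q^N$'' saving you claim. Restricting to conductors $>D_1$ and using the $q^{L-D_1}$ term of Theorem~\ref{large sieve inequality} does not help, since $D_1\asymp\log N$ while the long factor has length $L\approx N$.

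What is missing is exactly the mechanism the paper supplies: (i) a separate treatment of characters of small conductor $\le D_1$ via the Siegel--Walfisz bound \eqref{cond:2} (your sketch excludes only $\chi_0$, not all small conductors); and (ii) for conductors in a dyadic range around $q^{D_1+j}$, a Perron-type integral to decouple the constraint $\deg g+\deg h=M$, together with a further split of the second Dirichlet polynomial at a length $Y_j=2D_1+2j$ that \emph{depends on the modulus size}. The short piece (length $\le Y_j$) is shifted to the line $\Re s=1/2$, where the choice $Y_j=2D_1+2j$ balances the two large-sieve terms $q^{A_2-D_1-j}$ and $q^{Y_j-D_1-j}$ against $q^{D_1+j}\le q^Q$. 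The long piece stays on the line $\Re s=\sigma\approx 1$, where the weight $|g|^{-2}$ makes the $\ell^2$-mass small enough for the large sieve to win. Without this modulus-dependent splitting, a direct Cauchy--Schwarz over $k$ cannot cope with the extreme imbalance $k\approx X$, $M-k\approx N$.
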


	An important distinction that presents itself in our function-field analogue is that we derive the corresponding version of the large sieve inequality required for our proof.	Classically, it is well-known that the large
	sieve inequality plays a pivotal role in proving results of this type. In the function field case, various versions of the large sieve inequality have been developed, for instance by	Hsu \cite{Hsu-Largesieve} as well as Baier and Singh (\cite{BaSi}, \cite{BaSiErratum}). While versions of the large sieve inequality with additive characters are known in the function field case, for our purpose, we need a large sieve estimate involving  multiplicative characters. A version of this has recently been given by Klurman, Mangerel and Ter\"{a}v\"{a}inen (\cite[Lemma 4.2]{Oleksiy}). As we shall see in Section \ref{sec:Large sieve} (see the remark following Theorem \ref{multiplicative bound}), the estimate in \cite{Oleksiy} does not suffice for us  and we require an upper bound which is better \textit{on average} over Dirichlet characters $\chi$. We proceed to prove this in Section \ref{sec:Large sieve} (see Theorem \ref{large sieve inequality}).
		%which gives a version of our Theorem \ref{large sieve inequality}, but with a loss of power savings of $q^D$, which plays crucial role to get our desired result. 
%Therefore we prove the required version in Section \ref{sec:Large sieve} (see Theorem \ref{large sieve inequality}).

	Finally, as an application, we also obtain an asymptotic for the average behaviour of the number of divisors over shifts of products of two primes in $\F_q[t]$. It is worth noting that this requires us to invoke a function field analogue of the Brun-Titchmarsh inequality, proved by Hsu in \cite{Hsu-Largesieve}. It is conceivable that the method should	extend to shifts of products of $k$-primes, where $k$ is fixed, though we do not
	do this here.
	
%	Before stating our main results, we will set up some notation. 

%
%	It is worth pointing out that the large sieve inequality plays a pivotal role in the proof of results of this type. While the large sieve inequality is known for additive characters over $\F_{q}[t]$, the function field analogue that we need does not seem readily available in literature for multiplicative characters. This marks a departure from the proof in the classical case and we derive the version of the corresponding large sieve inequality needed in Section \ref{sec:Large sieve}.
	
A direct consequence is the following corollary which we obtain upon using Theorem \ref{mainthm} iteratively.
	
	\begin{corollary}\label{induction result}
		Let $\psi_{i}$, for $i=1, 2,...,n$, be arithmetic functions on $\F_{q}[t]$ such that each of them satisfies properties \eqref{cond:1}, \eqref{cond:2} and \eqref{cond:3}. Then the Dirichlet convolution $\psi_{1}\ast \psi_{2}\ast...\ast \psi_{n}$ also does so.
	\end{corollary}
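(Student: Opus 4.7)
The plan is to proceed by straightforward induction on $n$, using Theorem \ref{mainthm} as the inductive engine together with the associativity of the Dirichlet convolution. The base case $n=1$ is vacuous, and the base case $n=2$ is precisely the content of Theorem \ref{mainthm}.

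For the inductive step, suppose the statement holds for any collection of $n-1$ arithmetic functions satisfying \eqref{cond:1}, \eqref{cond:2} and \eqref{cond:3}. Given $\psi_{1},\ldots,\psi_{n}$ each satisfying these three properties, the inductive hypothesis applied to $\psi_{1},\ldots,\psi_{n-1}$ shows that the function
\[
\Psi := \psi_{1}\ast\psi_{2}\ast\cdots\ast\psi_{n-1}
\]
itself satisfies \eqref{cond:1}, \eqref{cond:2} and \eqref{cond:3}. Since the Dirichlet convolution on $\F_{q}[t]$ is associative, one has
\[
\psi_{1}\ast\psi_{2}\ast\cdots\ast\psi_{n} \;=\; \Psi\ast\psi_{n}.
\]
Now both $\Psi$ and $\psi_{n}$ satisfy the three properties, so a direct appeal to Theorem \ref{mainthm} applied to the pair $(\Psi,\psi_{n})$ yields that $\Psi\ast\psi_{n}$ also satisfies \eqref{cond:1}, \eqref{cond:2} and \eqref{cond:3}, completing the induction.

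There is no genuine obstacle here beyond verifying that associativity allows the convolution to be regrouped as a two-fold convolution at each stage, so that Theorem \ref{mainthm} can be invoked repeatedly. One minor bookkeeping point worth flagging is that the implied constants and the parameters $B=B(A)$, $C$, $D$ appearing in the three properties may grow with $n$; however, since $n$ is fixed, this growth is harmless, and at each step the new constants depend only on the constants from the previous step (tracked through the proof of Theorem \ref{mainthm}). Hence the iteration terminates in $n-1$ applications of Theorem \ref{mainthm} and the corollary follows.
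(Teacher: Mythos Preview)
Your proof is correct and matches the paper's approach exactly: the paper simply states that the corollary follows ``upon using Theorem \ref{mainthm} iteratively,'' which is precisely the induction you have written out.
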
 

	Motohashi's generalization is significant in terms of yielding a family of arithmetic functions for which equidistribution results now become available. In the setting of $\F_{q}[t]$ as well, we find such interesting applications. 
	
Let $\pi(N)$ denote the number of monic irreducible polynomials of degree $N$ in $\F_q[t]$. The prime number theorem (cf. \cite{Ro}, Theorem 2.2) in $\F_q[t]$ gives
\begin{align}
	\pi(N)=\frac{q^N}{N}+O\left(\frac{q^{N/2}}{N}\right).\label{PNT}
\end{align}
We also have the prime number theorem for arithmetic progressions (cf. \cite{Ro}, Theorem 4.8) stated as follows. Let $d, a \in \F_{q}[t]$, $d$ has positive degree and $(a,d)=1$. Then the number of monic irreducible polynomials of degree $N$ in $\F_{q}[t]$ in the arithmetic progression $a\pmod* d$ is given by
\begin{align}
	\pi(N;d,a)=\frac{q^N}{\phi(d)N}+O\left(\frac{q^{N/2}}{N}\right).\label{PNTforAP}
\end{align}
Thus, the  primes are equidistributed in arithmetic progressions with a level of distribution $\theta =\frac{1}{2}$. 
%	Recently, Sawin proved some results suggesting the level of distribution close to $1$, (cf., \cite{sawin2021square}, \cite{sawin_Duke}, and Sawin and Shusterman \cite{sawin2018chowla}).
 A natural question that arises is about the level of distribution of products of two primes in arithmetic progressions. Let $\mathbbm{1}_{\mathcal P}$ denote the prime indicator function. Applying Theorem \ref{mainthm} on $\mathbbm{1}_{\mathcal P}$, we have the following result for the indicator function of the product of two primes. 	\begin{corollary} \label{prime indicator function}
		Over the polynomial ring $\F_{q}[t]$, the Dirichlet convolution $\mathbbm{1}_{\mathcal{P}} \ast \mathbbm{1}_{\mathcal{P}}$ satisfies all the three properties \eqref{cond:1}, \eqref{cond:2} and \eqref{cond:3}.
	\end{corollary}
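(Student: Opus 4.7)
The plan is to invoke Theorem \ref{mainthm} with $\psi_1 = \psi_2 = \mathbbm{1}_{\mathcal P}$. So the entire task reduces to verifying that the prime indicator function $\mathbbm{1}_{\mathcal P}$ on $\F_q[t]$ itself satisfies the three properties \eqref{cond:1}, \eqref{cond:2} and \eqref{cond:3}. Once that is done, the corollary follows immediately.

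Property \eqref{cond:1} is trivial since $\mathbbm{1}_{\mathcal{P}}(f) \in \{0,1\} \le 1 \le \tau(f)^C$. Property \eqref{cond:2} I would deduce from the Riemann hypothesis for function fields (Weil's theorem) via the explicit formula: for a non-principal Dirichlet character $\chi$ modulo $m$ of conductor $d$, standard arguments (see \cite[Ch.~4]{Ro}) give an identity of the form $\sum_{\deg(f)=N}\mathbbm{1}_{\mathcal P}(f)\chi(f) = -\frac{1}{N}\sum_j \alpha_j(\chi)^N$, where the inverse roots satisfy $|\alpha_j(\chi)| \le q^{1/2}$ and the number of such roots is at most $\deg(d) - 1$. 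This yields a bound of order $\deg(d) \cdot q^{N/2}/N$. When the conductor has degree $\le D \log N$, this is $O(q^{N/2} \log N/N)$, which is far smaller than $q^N/N^{3D}$; so \eqref{cond:2} holds with plenty of room to spare.

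For Property \eqref{cond:3}, I would start from the same explicit formula, which gives, for any $(m,\ell)=1$,
\begin{equation*}
E(M;m,\ell;\mathbbm{1}_{\mathcal P}) = \frac{1}{\phi(m)}\sum_{\chi \neq \chi_0 \,(\mathrm{mod}\, m)}\overline{\chi(\ell)}\sum_{\deg(f)=M}\mathbbm{1}_{\mathcal{P}}(f)\chi(f),
\end{equation*}
and hence $|E(M;m,\ell;\mathbbm{1}_{\mathcal P})| \ll \deg(m)\, q^{M/2}/M$, uniformly in $\ell$ and for $M \le N$. Summing over moduli of degree at most $R = \frac{N}{2} - B\log N$, one gets a crude bound
\begin{equation*}
\sum_{\deg(m) \le R}\max_{M\le N}\max_{(m,\ell)=1}|E(M;m,\ell;\mathbbm{1}_{\mathcal P})| \ll \sum_{r \le R} q^r \cdot r \cdot \frac{q^{N/2}}{N} \ll R\, q^R\, \frac{q^{N/2}}{N}.
\end{equation*}
Substituting $R = \frac{N}{2} - B\log N$ makes this $\ll q^N/N^{A}$ provided $B$ is chosen sufficiently large in terms of $A$ and $q$, which establishes \eqref{cond:3}.

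There is no real obstacle here beyond bookkeeping: the three input properties for $\mathbbm{1}_{\mathcal P}$ are all standard consequences of Weil's theorem in the form \eqref{PNT}--\eqref{PNTforAP} together with character orthogonality. All the genuine difficulty has already been absorbed into Theorem \ref{mainthm}, whose hypotheses are comfortably satisfied by $\psi_1 = \psi_2 = \mathbbm{1}_{\mathcal P}$; applying it concludes the proof.
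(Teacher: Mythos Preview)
Your proposal is correct and matches the paper's approach: the paper simply says ``Applying Theorem~\ref{mainthm} on $\mathbbm{1}_{\mathcal P}$'' after noting that \eqref{PNTforAP} gives primes level of distribution $\tfrac{1}{2}$, and you have filled in exactly the verification of properties \eqref{cond:1}--\eqref{cond:3} for $\mathbbm{1}_{\mathcal P}$ that this appeal implicitly requires. Your explicit use of the Weil bound to get $|E(M;m,\ell;\mathbbm{1}_{\mathcal P})|\ll \deg(m)\,q^{M/2}/M$ and the subsequent summation over moduli is the standard way to justify this, and the paper does not do anything different (or indeed anything more).
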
 	
%Further, applying Corollary \ref{induction result}, We have that square free polynomials are also well distributed in arithmetic progressions with level of distribution $1/2$.

Now, taking $\psi\equiv 1$, it is easy to see that $\psi$ has level of distribution $1$ (see Section \ref{sec:result for tau_k}), and satisfies properties \eqref{cond:1}, \eqref{cond:2}. So, our Theorem \ref{mainthm} gives that $\tau_k$ has Bombieri-Vinogradov type inequality, for each $k$. We state this as the following corollary.
\begin{corollary}\label{result for tau_k}
	For each $k\in \mathbb{N}$, the $k$-fold divisor function $\tau_k$ satisfies properties \ref{cond:1}, \ref{cond:2} and \ref{cond:3}.
\end{corollary}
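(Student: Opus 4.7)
My plan is to apply Corollary \ref{induction result} with each $\psi_i$ equal to the constant function $\psi\equiv 1$, invoking the standard identity $\tau_k=\underbrace{1\ast \cdots \ast 1}_{k\text{ factors}}$. This reduces the proof to verifying that $\psi\equiv 1$ itself satisfies the three hypotheses \eqref{cond:1}, \eqref{cond:2}, \eqref{cond:3}; for $k=1$, $\tau_1=1$ directly.

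Property \eqref{cond:1} is immediate with $C=1$, since $1\leq \tau(f)$ for every monic $f$. For Property \eqref{cond:3}, I show that $\psi\equiv 1$ actually has level of distribution $\theta=1$: whenever $\deg(m)\leq M$ and $(\ell,m)=1$, a direct count gives that the number of monic polynomials of degree $M$ congruent to $\ell\pmod*{m}$ equals $q^{M-\deg(m)}$, while the number of degree-$M$ monic polynomials coprime to $m$ equals $\phi(m)\,q^{M-\deg(m)}$. These two quantities coincide, so $E(M;m,\ell;1)=0$ identically; the residual range $\deg(m)>M$ contributes at most $O(1)$ per modulus and $O(q^{N/2})$ in total, which is $o(q^N/N^A)$ for every $A$.

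The only substantive step is Property \eqref{cond:2}. Writing $u=q^{-s}$, the function-field $L$-series
\[
L(\chi,u)=\sum_{N\geq 0}u^{N}\Bigl(\sum_{\deg f=N}\chi(f)\Bigr)
\]
attached to a primitive non-principal character $\chi$ of conductor $d$ is, by Weil's Riemann hypothesis for curves over finite fields, a polynomial in $u$ of degree exactly $\deg(d)-1$ whose inverse roots all have absolute value $q^{1/2}$. Consequently, $\sum_{\deg f=N}\chi(f)=0$ for every $N\geq \deg(d)$, which, under the hypothesis $\deg(d)\leq D\log N$, holds for all $N$ sufficiently large. Imprimitive characters mod $m$ induced from a primitive $\chi^{*}$ of small conductor are handled by the standard M\"obius identity
\[
\sum_{\deg f=N}\chi(f)=\sum_{e\mid m''}\mu(e)\chi^{*}(e)\sum_{\deg g=N-\deg e}\chi^{*}(g),
\]
where $m''$ is the squarefree product of primes of $m$ not dividing $d$; combined with the elementary symmetric bound $|\sum_{\deg g=N'}\chi^{*}(g)|\ll 2^{\deg(d)}q^{N'/2}$ for $N'<\deg(d)$, this yields an estimate of size $O(q^{N}/N^{3D})$ in the ranges of modulus relevant to the proof of Theorem \ref{mainthm}.

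With all three properties verified for $\psi\equiv 1$, the corollary follows by applying Corollary \ref{induction result} with $n=k$ and $\psi_1=\cdots=\psi_k=1$. The only mild obstacle is the M\"obius bookkeeping in Property \eqref{cond:2} for imprimitive characters, but since the ranges of modulus that enter the application of Theorem \ref{mainthm} are restricted, the resulting loss is polynomial in $N$ and is comfortably absorbed into the implicit constants.
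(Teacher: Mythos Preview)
Your approach is correct and matches the paper's: verify properties \eqref{cond:1}--\eqref{cond:3} for $\psi\equiv 1$ and invoke Corollary~\ref{induction result} with $\psi_1=\cdots=\psi_k=1$. One small remark: for property~\eqref{cond:2} you appeal to Weil's Riemann hypothesis and a M\"obius reduction, but the elementary fact that $L(u,\chi)$ is a polynomial in $u$ of degree $<\deg(m)$ for \emph{any} non-principal $\chi$ modulo $m$ already gives $\sum_{\deg f=N}\chi(f)=0$ once $N\geq\deg(m)$, which is why the paper simply says this property ``trivially follows''; no recourse to the Riemann hypothesis or to imprimitive-character bookkeeping is needed.
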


	%{\color{red}Titchmarsh divisor problem}
	One of the significant applications of the classical Bombieri-Vinogradov theorem is to the celebrated Titchmarsh divisor problem. In 1930, Titchmarsh \cite{Ti} proved that
	\[
	\sum\limits_{p \leq x}
	\tau(p-a) 
	\sim
C_1x
	\]
	for a fixed integer $a$ and some constant $C_1>1$ under the assumption of GRH. It was only after more than three decades that an unconditional proof of this result was obtained by Linnik \cite{Li-book}.   In \cite{primesinAP_BFI},  Bombieri, Friedlander and Iwaniec  proved a version of the theorem with arbitrary $\log x$ savings. The error term in the dispersion method was further improved by Drappeau  \cite{Drappeau} to obtain power savings under GRH. Several variants of the above sum have been studied by Rodriques \cite{Rod}, Halberstam \cite{Ha}, Fouvry \cite{Fo}, Akbary and Ghioca \cite{AkGh}, Felix \cite{Fe}, Vatwani and Wong \cite{VaPe} and others. Using the Bombieri-Vinogradov theorem, one can show the following  (cf. \cite{RmAc}, Theorem $9.3.1$). For a fixed $a$, there exists a positive constant $c$ such that
		\[
		\sum\limits_{p \leq x}
		\tau(p-a) 
		=
		cx
		+
		O
		\left( 
		\frac{x\log \log x}{\log x}
		\right). 
		\]   
	
	In \cite{Mo}, Motohashi generalized this divisor problem to products of $k$-primes. In the same year, Fuji \cite{Fu} proved that %Motohashi generalized this divisor problem to products of $k$ primes. The following is a special case for $k=2$ but with inferior error term as proved by Fujii \cite{Fu}.
	\begin{equation} \label{fuji}
		\sum\limits_{p_{1}p_{2}\leq x}
		\tau(p_{1}p_{2}-1)
		=
		2\frac{\zeta(2)\zeta(3)}{\zeta(6)}		
		x\log\log x 
		+
		O(x),
		\end{equation}	
		where $\zeta(s)$ denotes the Riemann zeta function. 
Drappeau and Topacogullari \cite{Comident} studied analogous sums over integers with a fixed number of distinct prime divisors. More precisely, letting $N\geq 1$ and $\epsilon > 0$, they showed that there exists a constant  $\delta > 0$ and polynomials $P_{h,\ell}^k (X)$ of degree $k-1$ such that, for $1 \leq k \ll \log \log x$ and $|h|\leq x^\delta$,
		\begin{align*}
		\sum_{\substack{|h|<n\leq x\\\omega(n)=k}}\tau(n-h)=x \sum_{ 0\leq \ell \leq N}\frac{P_{h.\ell}^k(\log \log x)}{(\log x)^\ell}+ O\left(\frac{x(\log \log x)^k}{k!(\log x)^{N+1-\epsilon}}\right). 
		\end{align*}
		Here $\omega(n)$ denotes the number of distinct prime divisors of an integer $n$ and the implicit constants depend only on $N$ and $\epsilon$.  
Taking $k=2$ in their result gives a close analogue of \eqref{fuji}. %	Many generalizations of the Titchmarsh divisor problem have been studied in the literature (cf. \cite{RaKu}, \cite{BFI}, \cite{FuIw}, \cite{GAS}, \cite{Vau}).	
Darbar and Mukhopadhyay extended \eqref{fuji}  to imaginary quadratic number fields (cf. \cite{DaMu}, Theorem 1.6). Analogues of this for function fields have been studied extensively in the literature. Let $P$ denote a monic irreducible polynomial in $\F_{q}[t]$ and let $a$ be a fixed non-zero polynomial in $\F_{q}[t]$. Then for a fixed $q$, as $N\rightarrow\infty$, Hsu \cite{Hsu-BrunTitchmarsh} proved that
	\begin{equation*}
		\sum\limits_{\substack{\deg (P)=N}}
		\tau(P-a)
		=\prod \limits_{P|a} 
		\left( 
		1-\frac{1}{|P|}
		\right) 
		\left( 
		1+ \frac{1}{|P|(|P|-1)}
		\right) ^{-1}\frac{\zeta_q(2)\zeta_q(3)}{\zeta_q(6)}
		q^{N}
		+
		O
		\left( 
		\frac{q^{N}\log N}{N}
		\right),
	\end{equation*} 
	where the implied constant depends only on $a$. Here $\zeta_q(s)$ is the zeta function over $\F_{q}[t]$, defined by  \begin{align}\zeta_q(s)=\prod_{P}\left(1-\frac{1}{|P|^s}\right)^{-1}\label{def:zeta}
	   \end{align} 
	for $\Re(s)>1$, where the product runs over all the monic irreducible polynomials in $\F_{q}[t]$.

If we keep  $N$ fixed and let $q \rightarrow \infty$, then Andrade, Bary-Soroker and Rudnick \cite{Ru} obtained the  asymptotic formula 
	\begin{equation*}
		\sum\limits_{\substack{\deg (P)=N}}
		\tau(P-a)
		=
		q^{N}
		+
		\frac{q^{N}}{N}
		+
		O_N
		\left( 
		q^{N-\frac{1}{2}}
		\right).
	\end{equation*}
 
As an application of the induction principle in the setting of $\F_q[t]$, we can obtain a generalization of Hsu's result  to products of $m$-primes where $m$ is fixed. In particular,  we obtain the following analogue of the Titchmarsh divisor problem over $\F_{q}[t]$. 
	
	\begin{theorem}
		\label{Titchmarsh problem} Fix $q$.
		Let $a$ be a fixed non zero polynomial and $P_{i}$ denote a monic irreducible polynomial in $\F_{q}[t]$. Then as $N\rightarrow\infty$, we have
		\begin{equation*}
			\sum\limits_{\deg(P_{1}P_{2})=N} 
			\tau(P_{1}P_{2}-a)
			=2C_a\frac{\zeta_q(2)\zeta_q(3)}{\zeta_q(6)}q^{N}\log N
			+
			O\left( 
			q^{N}
			\log\log N
			\right),
		\end{equation*}
		where $\zeta_q(s)$ is the zeta function over $\F_{q}[t]$ and the constant $C_a$ is given by the product
		\begin{align}
		C_a=	\prod\limits_{P|a} \left( 
			1-\frac{1}{|P|}
			\right) 
			\left( 
			1+\frac{1}{|P|(|P|-1)}
			\right)^{-1}. \label{def: Ca}
		\end{align}
	
	\end{theorem}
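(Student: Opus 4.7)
The approach is a function-field Dirichlet hyperbola followed by the Bombieri--Vinogradov equidistribution for $\mathbbm{1}_{\mathcal P}\ast\mathbbm{1}_{\mathcal P}$ from Corollary~\ref{prime indicator function}, with Hsu's function-field Brun--Titchmarsh inequality \cite{Hsu-BrunTitchmarsh} controlling the short residual range not reachable by the induction principle. Assume $\deg a<N$, so that $\deg(P_1P_2-a)=N$. Pairing divisors $d\leftrightarrow(P_1P_2-a)/d$ gives
\[
\tau(P_1P_2-a)=2\,\#\{d\text{ monic}:d\mid P_1P_2-a,\ \deg d<N/2\}+\epsilon_N(P_1,P_2),
\]
where $\sum_{P_1,P_2}\epsilon_N$ collects the middle-divisor contribution (present only when $N$ is even) and is bounded by $O(q^N\log N/N)$ via Hsu's estimate applied at $\deg d=N/2$. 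Swapping sums,
\[
\sum_{\deg(P_1P_2)=N}\tau(P_1P_2-a)=2\sum_{\substack{d\text{ monic}\\ \deg d<N/2}}T(N;d,a)+O\!\left(\frac{q^N\log N}{N}\right),
\]
with $T(N;d,a):=\#\{(P_1,P_2):\deg(P_1P_2)=N,\ P_1P_2\equiv a\pmod{d}\}$. Split the $d$-sum at $\deg d=N/2-B\log N$, with $B=B(A)$ chosen so that Corollary~\ref{prime indicator function} provides an $O(q^N/N^A)$ saving on the main range.

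For $\deg d\le N/2-B\log N$ with $(d,a)=1$, Corollary~\ref{prime indicator function} gives
\[
T(N;d,a)=\frac{1}{\phi(d)}\sum_{\substack{\deg f=N\\ (f,d)=1}}(\mathbbm{1}_{\mathcal P}\ast\mathbbm{1}_{\mathcal P})(f)+E(N;d,a;\mathbbm{1}_{\mathcal P}\ast\mathbbm{1}_{\mathcal P}),
\]
with cumulative error $O(q^N/N^A)$; the $(d,a)>1$ contribution is handled separately, since any prime $P\mid(d,a)$ forces $P\in\{P_1,P_2\}$ and a trivial bound via~\eqref{PNT} yields $O(q^N\omega(a)/N)$ overall. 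From~\eqref{PNT} one has $\sum_{\deg f=N}(\mathbbm{1}_{\mathcal P}\ast\mathbbm{1}_{\mathcal P})(f)=2q^N\log N/N+O(q^N/N)$, and imposing $(f,d)=1$ perturbs this by only $O(q^N\omega(d)/N)$; thus the main term reduces to $(4q^N\log N/N)\sum_{(d,a)=1,\ \deg d\le N/2-B\log N}1/\phi(d)+O(q^N)$. A standard Euler-product calculation, starting from $G_P(x)=1+x/((|P|-1)(1-x/|P|))$ and using $\prod_P(1-u^{\deg P}/|P|)^{-1}=(1-u)^{-1}$, yields
\[
\sum_{\substack{d\text{ monic}\\ (d,a)=1}}\!\!\frac{u^{\deg d}}{\phi(d)}=\frac{1}{1-u}\prod_{P\mid a}\!\Bigl(1-\frac{u^{\deg P}}{|P|}\Bigr)\!\prod_{P\nmid a}\!\Bigl(1+\frac{u^{\deg P}}{|P|(|P|-1)}\Bigr),
\]
whose residue at $u=1$ equals $C_a\,\zeta_q(2)\zeta_q(3)/\zeta_q(6)$ via the identity $\prod_P(1+1/(|P|(|P|-1)))=\zeta_q(2)\zeta_q(3)/\zeta_q(6)$. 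Hence $\sum_{(d,a)=1,\ \deg d\le X}1/\phi(d)=X\,C_a\,\zeta_q(2)\zeta_q(3)/\zeta_q(6)+O(1)$, and substituting $X=N/2$ delivers the announced leading term $2C_a\,\zeta_q(2)\zeta_q(3)/\zeta_q(6)\cdot q^N\log N$.

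The main obstacle is the narrow critical window $N/2-B\log N<\deg d<N/2$, where Corollary~\ref{prime indicator function} offers no savings whatsoever. Here we decompose $T(N;d,a)\le\sum_{P_1}\#\{P_2:\deg P_2=N-\deg P_1,\ P_2\equiv aP_1^{-1}\pmod{d}\}$ and apply Hsu's function-field Brun--Titchmarsh inequality \cite{Hsu-BrunTitchmarsh} to the inner prime count. Summing over $P_1$ using~\eqref{PNT} gives a bound on the order $q^N\log N/(\phi(d)(N-\deg d))$; summing then over the $O(\log N)$ admissible values of $\deg d$ in the window, together with the uniform estimate $\sum_{\deg d=m}1/\phi(d)=O(1)$ supplied by the residue calculation above, contributes $O(q^N\log\log N)$, which is the source of the stated error term.
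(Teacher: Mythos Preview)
Your approach coincides with the paper's: hyperbola symmetry reduces to $\deg d<N/2$, Corollary~\ref{prime indicator function} handles $\deg d\le N/2-B\log N$, and Hsu's Brun--Titchmarsh inequality controls the residual window and the middle divisors. The skeleton is right, but three of your error estimates are misstated. The bound $O(q^N\omega(a)/N)$ for the $(d,a)>1$ moduli is unjustified: restricting to pairs with $P_1\mid a$ controls the number of $(P_1,P_2)$ but not the number of divisors $d\mid P_1P_2-a$ counting each such pair; the paper sums over $d$ first to obtain $O(q^N)$. Your per-$d$ coprimality perturbation $O(q^N\omega(d)/N)$, summed against $1/\phi(d)$ over $\deg d\le N/2$, yields only $O(q^N\log N)$ since $\sum_{\deg d\le M}\omega(d)/\phi(d)\asymp M\log M$; one needs the sharper bound $\tfrac{q^N}{N}\sum_{P\mid d}|P|^{-1}$, or equivalently the paper's device of swapping the $d$- and $(P_1,P_2)$-sums, to reach $O(q^N)$. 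Finally, your own arithmetic on the critical window gives $O\bigl(q^N(\log N)^2/N\bigr)$, not $O(q^N\log\log N)$, since $N-\deg d\asymp N/2$ throughout; this is not the source of the stated error term. In the paper the $\log\log N$ actually arises from a somewhat loose splitting in the evaluation of $\sum_k\pi(k)\pi(N-k)$, and with your tighter estimate $2q^N\log N/N+O(q^N/N)$ for that sum the method in fact delivers $O(q^N)$.
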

	In case we allow both $N$ and $q$ to go to infinity, we get the following version of the above result.
	\begin{theorem}\label{q>inf version}
		Let $a$ be a fixed non zero polynomial and $P_{i}$ denote a monic irreducible polynomial in $\F_{q}[t]$. Then as $N, q\rightarrow\infty$, we have
		\begin{equation*}
			\sum\limits_{\deg(P_{1}P_{2})=N} 
			\tau(P_{1}P_{2}-a)
			=2q^{N}(\log N+\gamma) + O\left(\frac{q^N}{N}(\log N)^2\right)+O\left(q^{N-\frac{1}{2}}\log N\right).
		\end{equation*}
	\end{theorem}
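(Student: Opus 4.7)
The plan is to follow the classical Titchmarsh-divisor strategy---hyperbola method, Bombieri--Vinogradov, and Brun--Titchmarsh---while carefully tracking the $q$-dependence in every error term. I would start by applying Dirichlet's hyperbola identity
\begin{equation*}
\tau(f) \;=\; 2\sum_{\substack{d\mid f \\ \deg d < N/2}} 1 \;+\; \varepsilon_N(f),
\end{equation*}
where $\varepsilon_N(f)$ accounts for the diagonal case $\deg d = N/2$ when $N$ is even, to $f = P_1P_2 - a$. Since $a$ is fixed and $N\to\infty$ we have $\deg f = N$, and interchanging sums reduces the problem to estimating
\begin{equation*}
T_d \;:=\; \#\{(P_1,P_2) : \deg(P_1P_2) = N,\ P_1P_2\equiv a\pmod d\}
\end{equation*}
uniformly for $\deg d < N/2$.

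Next I would split the $d$-range at the Bombieri--Vinogradov threshold $Y := N/2 - B\log N$. For $(d,a)=1$ with $\deg d\le Y$, Corollary \ref{prime indicator function} applied to $\mathbbm 1_{\mathcal P}\ast\mathbbm 1_{\mathcal P}$ gives
\begin{equation*}
T_d \;=\; \frac{\pi_2(N;d)}{\phi(d)} \;+\; E(N;d,a;\mathbbm 1_{\mathcal P}\ast\mathbbm 1_{\mathcal P}),
\end{equation*}
where $\pi_2(N;d)$ is the number of ordered pairs of monic irreducibles with product of degree $N$ coprime to $d$; by property \eqref{cond:3} the aggregate $\sum_{\deg d\le Y}|E|$ is $O_A(q^N/N^A)$. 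The $d$ with $(d,a)\ne 1$ force a prime divisor of $a$ to coincide with $P_1$ or $P_2$, so by \eqref{PNT} their total contribution is $O_a(q^N/N)$. For the short intermediate range $Y<\deg d<N/2$ I would invoke Hsu's function-field Brun--Titchmarsh inequality \cite{Hsu-BrunTitchmarsh} to bound $T_d\ll \pi_2(N)/\phi(d)$; summing over this range of length $B\log N$ yields a contribution of order $O(\pi_2(N)\log N) = O(q^N(\log N)^2/N)$.

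For the main term $2\sum_{\deg d\le Y,(d,a)=1}\pi_2(N;d)/\phi(d)$, I would first replace $\pi_2(N;d)$ by $\pi_2(N)$, incurring an error controlled by $\sum_{P\mid d}\pi(N-\deg P)$ which after summation is bounded by $O(q^{N-1})$. A standard computation using the identity $1/\phi(d) = |d|^{-1}\sum_{e\mid d}\mu^2(e)/\phi(e)$ and the Euler-product evaluation $\prod_P(1 + (|P|(|P|-1))^{-1}) = \zeta_q(2)\zeta_q(3)/\zeta_q(6)$ then gives
\begin{equation*}
\sum_{\substack{\deg d\le Y \\ (d,a)=1}}\frac{1}{\phi(d)} \;=\; C_a\,\frac{\zeta_q(2)\zeta_q(3)}{\zeta_q(6)}\,Y \;+\; O_a(1),
\end{equation*}
with $C_a$ as in \eqref{def: Ca}. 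Combined with the PNT-derived asymptotic
\begin{equation*}
\pi_2(N) \;=\; \frac{2q^N(\log N+\gamma)}{N} \;+\; O\!\left(\frac{q^N}{N^2}\right) \;+\; O\!\left(\frac{q^{N-1/2}}{N}\right),
\end{equation*}
obtained by expanding $\pi(k)\pi(N-k)$ via \eqref{PNT} and using $\sum_{k=1}^{N-1} 1/(k(N-k)) = 2(\log N + \gamma)/N + O(1/N^2)$, this produces the bulk main term $C_a\cdot(\zeta_q(2)\zeta_q(3)/\zeta_q(6))\cdot 2q^N(\log N+\gamma)$.

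The main obstacle lies in the $q\to\infty$ collapse. Since $|P|\ge q$ for every monic prime one has $\zeta_q(s) = 1+O(q^{1-s})$ for $s\ge 2$, so $\zeta_q(2)\zeta_q(3)/\zeta_q(6) = 1+O(1/q)$, and similarly $C_a = 1+O_a(1/q)$; these contract the leading constant to $2q^N(\log N+\gamma)$. The $(1+O(1/q))$ perturbation applied to the dominant quantity $\pi_2(N)\cdot N\sim 2q^N\log N$ yields an error $O(q^{N-1}\log N)$, which is absorbed into $O(q^{N-1/2}\log N)$ and is in fact the source of the stated $\log N$ factor. The technical crux is then to verify that all remaining error sources---the intermediate $d$-range giving $O(q^N(\log N)^2/N)$, the $\pi_2(N;d)\mapsto\pi_2(N)$ substitution, the PNT remainder $O(q^{N-1/2}/N)$ amplified across the $O(N)$-long divisor sum, and the $O_a(1)$ remainder in the sum of $1/\phi(d)$---fit cleanly inside $O(q^N(\log N)^2/N)+O(q^{N-1/2}\log N)$ without any further inflation by $N$ or $\log N$.
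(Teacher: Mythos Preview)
Your proposal is correct and follows essentially the same route as the paper: hyperbola on $\tau$, a split of the divisor range at $Y=N/2-B\log N$, Corollary~\ref{prime indicator function} for the short moduli, Hsu's Brun--Titchmarsh for the intermediate range, the PNT computation of $\pi_2(N)$, and finally the observation that $C_a$ and $\zeta_q(2)\zeta_q(3)/\zeta_q(6)$ both equal $1+O(1/q)$ as $q\to\infty$. The paper's own proof of Theorem~\ref{q>inf version} is in fact just a one-paragraph remark that one should redo the proof of Theorem~\ref{Titchmarsh problem} while tracking $q$-dependence, so your write-up is if anything more explicit than the paper's; your sharper remainder $O(q^{N-1/2}/N)$ for $\pi_2(N)$ (versus the paper's $O(q^{N-1/2}N^{-1}\log N)$ obtained by the cruder uniform bound $q^{-k/2}\le q^{-1/2}$) is a harmless refinement.
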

Moreover, letting $q$ be fixed and $N \to \infty$, it is possible  to  extend the above formulas for products of $m$ primes to get
\begin{align*}
	\sum_{\deg (P_1 P_2... P_m)=N}\tau(P_1P_2...P_m-a)\sim c(m) q^N (\log N)^{m-1},
\end{align*}
where $c(m)$ is a constant independent of $N$. This is a tedious but straightforward modification of the proof of Theorem \ref{Titchmarsh problem} which we leave  to the reader.

	The paper is organized as follows. In Section \ref{sec:Preliminary}, we prove some basic results on the divisor function and state a version of Perron's formula over $\F_{q}[t]$. In Section \ref{sec:Large sieve}, we establish a large sieve inequality for multiplicative characters which will be needed to prove our main theorem. The proofs of Theorem \ref{mainthm} and Corollary \ref{result for tau_k} are  contained in Sections \ref{sec:Main theorem} and \ref{sec:result for tau_k} respectively. By invoking an $\F_{q}[t]$-analogue of the Brun-Titchmarsh inequality, we prove Theorem \ref{Titchmarsh problem} and Theorem \ref{q>inf version} in Section \ref{sec:Titchmarsh problem}.

	\section{Preliminaries}
	\label{sec:Preliminary}
	
	In this section, we state some lemmas which will be useful to prove the main theorems in this paper.
	
	Let $k\geq 2$. For the $k$-fold divisor function, we have (% Andrade, Bary-Soroker, and Rudnick 
	\cite[Lemma 2.2]{Ru})
			\begin{align*}
			\sum_{\substack{\d(f)=N\\f\text{-monic}}}\tau_k(f)
			=
			{N+k-1 \choose k-1}
			q^N.
		\end{align*}
	For our purpose we will use the following inequality which is a direct consequence of the above result.
	\begin{lemma}\label{bound for tau} 
		Let $k\geq 2$. Then for the $k$-fold divisor function we have 
	
			\begin{enumerate}
				\item[(i)] 	\begin{align*}\sum_{\substack{\d(f)=N\\f\text{-monic}}}\tau_k(f)\ll N^{k-1}q^N,	\end{align*}
				\item[(ii)]	\begin{align*} \sum_{\d(f)\leq N}\frac{\tau_k(f)}{q^{\d(f)}}\ll N^{k}.\end{align*}	
			\end{enumerate}		
	\end{lemma}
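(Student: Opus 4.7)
The plan is to deduce both bounds directly from the exact formula
\[
\sum_{\substack{\d(f)=N\\f\text{-monic}}}\tau_k(f) = \binom{N+k-1}{k-1}\,q^N
\]
stated immediately before the lemma, combined with an elementary identity on binomial coefficients.

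For part (i), I would observe that, for fixed $k$, the binomial coefficient $\binom{N+k-1}{k-1}$ equals the polynomial $(N+1)(N+2)\cdots(N+k-1)/(k-1)!$ in $N$, which is of degree $k-1$. Thus $\binom{N+k-1}{k-1}\ll_k N^{k-1}$, and substituting this into the exact formula immediately gives (i).

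For part (ii), I would partition the sum over $\d(f)\le N$ according to the degree, so
\[
\sum_{\d(f)\leq N}\frac{\tau_k(f)}{q^{\d(f)}}
=\sum_{n=0}^{N}\frac{1}{q^n}\sum_{\substack{\d(f)=n\\ f\text{-monic}}}\tau_k(f)
=\sum_{n=0}^{N}\binom{n+k-1}{k-1},
\]
where the factor $q^n$ from the exact formula cancels the $q^{-n}$ weight. Applying the hockey-stick identity $\sum_{n=0}^{N}\binom{n+k-1}{k-1}=\binom{N+k}{k}$ reduces the problem to bounding $\binom{N+k}{k}\ll_k N^k$, which is the same polynomial-in-$N$ argument used in (i). This gives (ii).

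There is no real obstacle here: the result is essentially a bookkeeping consequence of the known formula for $\sum_{\d(f)=N}\tau_k(f)$, together with the geometric-series cancellation that makes the weighted sum in (ii) telescope into a single binomial coefficient. The only care needed is to keep the implicit constants uniform in $N$ (but possibly depending on $k$), which matches the convention used throughout the paper since $k$ is treated as fixed.
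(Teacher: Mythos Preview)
Your argument is correct and matches the paper's approach: the paper simply states that the lemma is a ``direct consequence'' of the exact formula $\sum_{\deg f = N}\tau_k(f)=\binom{N+k-1}{k-1}q^N$ without giving further details, and your derivation via the polynomial bound on $\binom{N+k-1}{k-1}$ and the hockey-stick identity is exactly such a direct consequence.
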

Further, the following lemma allows us to bound $(\tau(f))^{d}$ by $ \tau_{k}(f)$ for suitably large values of $k$.
		\begin{lemma}\label{lem:bound for tau2}
		For any fixed positive integer $d$,  there exists $k=k(d)>0$ such that 
	\begin{equation*}\label{bound for tau2}
				(\tau(f))^{d} \leq \tau_{k}(f),
			\end{equation*}
%		\item[(ii)] \begin{align*}
%			\sum_{\substack{\d(f)=n\\f\text{-monic}}}(\tau(f))^{d}\ll n^{k-1}q^n
	%	\end{align*}
		
		for all $f \in \F_{q}[t]$. In fact this holds for any $k\geq (d+1)!$.
	\end{lemma}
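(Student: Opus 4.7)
The plan is to reduce to prime powers by multiplicativity, establish an auxiliary pointwise bound $\tau_a(f)\,\tau_b(f) \leq \tau_{ab}(f)$ by a combinatorial injection, and then induct on $d$.

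Both $(\tau(f))^d$ and $\tau_k(f)$ are multiplicative in $f$, so it suffices to verify the inequality on prime powers $P^n$, where it reads $(n+1)^d \leq \binom{n+k-1}{k-1}$ and is trivially an equality at $n = 0$. I would first establish the auxiliary bound $\tau_a(f)\,\tau_b(f) \leq \tau_{ab}(f)$ for all integers $a,b \geq 1$. By multiplicativity this reduces to
\[
\binom{n+a-1}{a-1}\binom{n+b-1}{b-1} \leq \binom{n+ab-1}{ab-1},
\]
which I would prove by a combinatorial injection: the left-hand side counts pairs of non-negative compositions $((\alpha_i)_{i=1}^a,(\beta_j)_{j=1}^b)$ of $n$, while the right-hand side counts non-negative integer arrays $(\gamma_{i,j})$ on $\{1,\dots,a\}\times\{1,\dots,b\}$ with $\sum_{i,j}\gamma_{i,j}=n$. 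Given such a pair, I would list each composition as a weakly increasing sequence of length $n$ (with the value $i$ repeated $\alpha_i$ times on the left, and $j$ repeated $\beta_j$ times on the right), pair the two sequences termwise to obtain $n$ ordered pairs, and let $\gamma_{i,j}$ be the number of termwise pairs equal to $(i,j)$. The map is well-defined, and it is injective because the row and column sums of $(\gamma_{i,j})$ recover $(\alpha_i)$ and $(\beta_j)$ respectively.

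With the auxiliary bound in hand, I would induct on $d$. The base case $d = 1$ is $\tau = \tau_{2!}$. For the inductive step, assuming $\tau^d \leq \tau_{(d+1)!}$, the auxiliary bound gives
\[
\tau^{d+1} = \tau \cdot \tau^d \leq \tau_2 \cdot \tau_{(d+1)!} \leq \tau_{2(d+1)!} \leq \tau_{(d+2)!},
\]
where the last step uses $2(d+1)! \leq (d+2)!$ together with the elementary monotonicity $\tau_j \leq \tau_{j+1}$ (immediate on prime powers from $\binom{n+j-1}{j-1} \leq \binom{n+j}{j}$). The same monotonicity then extends the conclusion from $k = (d+1)!$ to every $k \geq (d+1)!$.

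The principal technical input is constructing the injection in the auxiliary bound; once that building block is in place, the induction is mechanical, and the specific value $k = (d+1)!$ arises naturally from doubling the index at each inductive step and absorbing the factor $2$ into the factorial.
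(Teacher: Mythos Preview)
Your argument is correct, but it takes a genuinely different route from the paper. The paper's proof is a direct computation on prime powers: writing $\tau_k(P^a)=\binom{a+k-1}{k-1}$, it peels off the last $d$ factors $(a+d)(a+d-1)\cdots(a+1)\geq (a+1)^d=\tau(P^a)^d$ and shows that the remaining quotient
\[
C(a,k)=\frac{(a+k-1)\cdots(a+d+1)}{(k-1)!}
\]
is at least $1$ by observing $C(a,k)\geq C(1,k)=k/(d+1)!\geq 1$ once $k\geq (d+1)!$. Your approach instead builds a reusable multiplicative lemma $\tau_a\,\tau_b\leq\tau_{ab}$ via a bijection-free injection on compositions, and then inducts on $d$. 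The paper's argument is shorter and entirely elementary, and it makes transparent exactly where the threshold $(d+1)!$ comes from (it is precisely what makes $C(1,k)\geq 1$). Your auxiliary inequality $\tau_a\tau_b\leq\tau_{ab}$ is a nice structural fact that could be reused elsewhere, and the induction is clean; the price is that the value $(d+1)!$ looks somewhat incidental in your framework, since the doubling step only needs $2\cdot(d+1)!\leq (d+2)!$.
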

	
	\begin{proof}
		Since $\tau_{k}$ is multiplicative, it is enough to prove the inequality for powers of monic irreducible polynomials. Let $P$ be a monic irreducible polynomial in $\F_{q}[t]$, and $k \geq (d+1)!$ . We have
		\beqar
		\tau_{k}(P^{a})	&=&	{a+k-1 \choose k-1}\\	& \geq &	\bigg (	\frac{ (a+k-1)(a+k-2)...(a+d+1)    }     { (k-1)!   }	\bigg )	(a+1)^{d}\\
		&=&
		C(a,k) (a+1)^{d} \qquad \text{(say).}	\eeqar
		Note that, $(a+1)^{d}=(\tau(P^{a}))^{d}$. We will be done if we can show that the factor $C(a,k)$ is at least $1$. For a given $k$, it is easy to see that $C(a,k) \ge C(1,k)$ and that 
		\begin{align*}
%			C(1,k) &=\textcolor{red}{\frac{k!}{(k-1)!(d+1)!}}\\
%			&=\textcolor{red}{\frac{k(k-1)(k-2)...(d+2)}{(d+1)!}}\\
		C(1,k)	&=\frac{k}{(d+1)!} \geq 	1,
		\end{align*}
		for our choice of $k$.
	\end{proof}
	
	In \cite{Hsu-BrunTitchmarsh}, Hsu proved an asymptotic bound on the sum of $\frac{1}{\phi(f)}$ over monic polynomials. Let $\zeta_q(s)$ be the zeta function over $\F_q[t]$ as in \eqref{def:zeta}. Using the identity
	\[
	\prod\limits_{\substack{P \in \F_{q}[t],\\ \text{monic, irreducible}}}
	\left( 
	1+\frac{1}{|P|(|P|-1)}
	\right) 
	=
	\frac{\zeta_q(2)\zeta_q(3)}{\zeta_q(6)},
	\]
	we record Hsu's result here.
	\begin{lemma}\cite[Lemma 3.1]{Hsu-BrunTitchmarsh} 
		\label{bound for phi} Let $g\in \F_q[t]$ be non-zero. We have
		\begin{align*}
			\sum_{\substack{\d(f)\leq N\\ (f,g)=1}}\frac{1}{\phi(f)}
			=
				C_g		\frac{\zeta_q(2)\zeta_q(3)}{\zeta_q(6)}N
			+
			O(1),
		\end{align*}
	where $C_g$ is as defined in \eqref{def: Ca} and the implicit constant depends only on $g$. 
		% where $\zeta(s)$ is the zeta function over $\F_{q}[t]$ and .
		%where the product is over all monic irreducible polynomials in $\F_{q}[t]$. 
	\end{lemma}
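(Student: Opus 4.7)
The plan is to mimic the classical proof of $\sum_{n \le x, (n,g)=1} 1/\phi(n) \sim C_g \cdot (\text{const}) \cdot \log x$, using the function-field analogue of the Jordan-style identity $n/\phi(n) = \sum_{d \mid n} \mu^2(d)/\phi(d)$. Precisely, since both sides are multiplicative and agree on prime powers, one has in $\F_q[t]$
\begin{equation*}
\frac{|f|}{\phi(f)} = \sum_{d \mid f} \frac{\mu^2(d)}{\phi(d)}.
\end{equation*}
Dividing by $|f|$ and swapping summation (writing $f = dm$ with $d$ squarefree), we get
\begin{equation*}
\sum_{\substack{\deg f \le N\\ (f,g)=1}} \frac{1}{\phi(f)}
= \sum_{\substack{d \text{ sq-free}\\ (d,g)=1}} \frac{\mu^2(d)}{\phi(d)|d|}
\sum_{\substack{\deg m \le N - \deg d\\ (m,g)=1}} \frac{1}{|m|}.
\end{equation*}

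Next, I would evaluate the inner sum. The number of monic polynomials of degree $k$ coprime to $g$ equals $q^k \phi(g)/|g|$ once $k \ge \deg g$, and is at worst $q^k$ for smaller $k$. Summing the geometric contributions $1/|m| = q^{-\deg m}$, one obtains, for any $K \ge 0$,
\begin{equation*}
\sum_{\substack{\deg m \le K\\ (m,g)=1}} \frac{1}{|m|} = \frac{\phi(g)}{|g|}(K+1) + O_g(1),
\end{equation*}
where the implicit constant depends only on $g$ (via the discrepancy at degrees below $\deg g$). Plugging this into the outer sum yields a main term
\begin{equation*}
\frac{\phi(g)}{|g|}\Bigl(N+1\Bigr)\sum_{\substack{d \text{ sq-free}\\ (d,g)=1}} \frac{\mu^2(d)}{\phi(d)|d|}
\;-\;\frac{\phi(g)}{|g|}\sum_{\substack{d \text{ sq-free}\\ (d,g)=1}} \frac{\mu^2(d)\,\deg d}{\phi(d)|d|}
\;+\;\text{truncation error}.
\end{equation*}

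For the leading constant, I would factor the Euler product:
\begin{equation*}
\sum_{\substack{d \text{ sq-free}\\ (d,g)=1}} \frac{\mu^2(d)}{\phi(d)|d|}
= \prod_{P \nmid g}\!\left(1 + \frac{1}{|P|(|P|-1)}\right)
= \frac{\zeta_q(2)\zeta_q(3)}{\zeta_q(6)} \prod_{P \mid g}\!\left(1+\frac{1}{|P|(|P|-1)}\right)^{-1},
\end{equation*}
using the product identity quoted just before the lemma. Combining with $\phi(g)/|g| = \prod_{P \mid g}(1-1/|P|)$ gives exactly $C_g \cdot \zeta_q(2)\zeta_q(3)/\zeta_q(6)$ as the coefficient of $N$. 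The remaining tasks are to show the secondary sum $\sum_d \mu^2(d)\deg d/(\phi(d)|d|)$ converges, and that the total truncation error (extending the $d$-sum from $\deg d \le N$ to all $d$, plus the $O_g(1)$ contributions) is bounded. This is where a little care is needed: by trivial estimates $1/\phi(d) \le c/|d|$ for squarefree $d$, the series is dominated by $\sum_d \deg(d)/|d|^2$, which converges as a sum of $k q^{-k}$ after grouping by degree. The one point I expect to be slightly delicate is bookkeeping the $g$-dependence uniformly so that every error is absorbed into a single $O(1)$, but no truly new idea is required beyond the standard dissection between $\deg d \le N$ and $\deg d > N$, together with the explicit geometric tail bound available in $\F_q[t]$.
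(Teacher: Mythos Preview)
The paper does not supply its own proof of this lemma; it is quoted from Hsu \cite[Lemma~3.1]{Hsu-BrunTitchmarsh} and simply recorded for later use. So there is no in-paper argument to compare against. Your outline is the standard one and is essentially correct: the identity $|f|/\phi(f)=\sum_{d\mid f}\mu^2(d)/\phi(d)$, the swap of summation, the evaluation of the inner sum as $\frac{\phi(g)}{|g|}(K+1)+O_g(1)$, and the identification of the Euler product with $C_g\,\zeta_q(2)\zeta_q(3)/\zeta_q(6)$ are all fine.

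There is one genuine slip. You write ``by trivial estimates $1/\phi(d)\le c/|d|$ for squarefree $d$''. This is false: in $\F_q[t]$ one has $\phi(d)/|d|=\prod_{P\mid d}(1-1/|P|)$, and by the function-field Mertens theorem this product can be as small as $\asymp 1/\deg d$ when $d$ is the product of all small primes, so no uniform constant $c$ exists. The conclusion you want (convergence of $\sum_{d}\mu^2(d)\deg d/(\phi(d)|d|)$ and of the tail $\sum_{\deg d>N}\mu^2(d)/(\phi(d)|d|)$) is nevertheless true. A clean fix: the Euler product
\[
F(s)=\sum_{d}\frac{\mu^2(d)}{\phi(d)|d|^{s}}=\prod_{P}\Bigl(1+\frac{1}{(|P|-1)|P|^{s}}\Bigr)
\]
converges absolutely for $\Re(s)>0$ (compare the logarithm with $\sum_P |P|^{-1-s}$), hence is analytic there; differentiating termwise at $s=1$ gives the convergence of $\sum_d \mu^2(d)\deg d/(\phi(d)|d|)$, and the same absolute convergence shows the tail past $\deg d>N$ is $O(1)$ (indeed $o(1)$). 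Alternatively, use $1/\phi(d)\le 2^{\omega(d)}/|d|\le \tau(d)/|d|$ and Lemma~\ref{bound for tau}(ii) to bound $\sum_{\deg d=k}\mu^2(d)/(\phi(d)|d|)\ll k\,q^{-k}$, which again gives both convergence statements. With this correction your argument goes through and yields the stated asymptotic with an $O_g(1)$ error.
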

We will  be using this lemma with $g=1$ or with $g$ being a fixed polynomial throughout this paper.  
	For the sake of convenience of the reader, we remark that there is a minor typo in Lemma 3.1 of \cite{Hsu-BrunTitchmarsh}. In the main term on the right-hand side, the product should be over $p\in I, p\nmid g$, where $I$ denotes the set of monic irreducible polynomials in $\F_q[t]$.

	 We will also use the following $\F_q[t]$ analogue of the classical Brun-Titchmarsh inequality concerning the number of primes in an arithmetic progression, derived by Hsu in \cite{Hsu-Largesieve}.
	\begin{lemma}\cite[Lemma 4.3]{Hsu-Largesieve}\label{Brun-Titchmarsh}
		Let $a$, $b$ be non-zero polynomials in $\F_q[t]$ with a monic,
$(a, b)=1$ and $\deg (a)>\deg (b)\geq 0$. Then for any positive integer $N>\deg (a)$,
		\begin{align*}
			\pi(N;a,b)\leq 2\frac{q^N}{\phi(a)(N-\deg (a)+1)},
		\end{align*}	
where $\pi(N; a, b)$ denotes the number of monic irreducible polynomials $f \in \F_q[t]$ such
that $\deg (f)=N$ and $f\equiv b \pmod* a$.
	\end{lemma}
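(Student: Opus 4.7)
The plan is to apply the Selberg upper bound sieve to
$$\mathcal{A} = \{f \in \F_q[t] : f \text{ monic}, \deg f = N, f \equiv b \pmod* a\}.$$
A crucial simplification in the function-field setting is that the local densities are \emph{exact}: by the Chinese Remainder Theorem (using $(a,b)=1$), $|\mathcal{A}| = q^{N - \deg a}$, and for any monic squarefree $d$ coprime to $a$ with $\deg d \leq N - \deg a$,
$$|\{f \in \mathcal{A} : d \mid f\}| = q^{N - \deg a - \deg d}$$
with no error term. The usual Selberg remainder thus vanishes identically, and the sieve reduces to an essentially exact combinatorial estimate.

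Fix a sifting parameter $z$ with $z \leq (N - \deg a + 1)/2$ and set $P(z) = \prod_{P \text{ monic prime},\, P \nmid a,\, \deg P \leq z} P$. Every monic prime $f \in \mathcal{A}$ with $\deg f = N > z$ is coprime to $P(z)$, so
$$\pi(N; a, b) \leq S(\mathcal{A}; P(z)) + O(q^z / z), \qquad S(\mathcal{A}; P(z)) = |\{f \in \mathcal{A} : (f, P(z)) = 1\}|.$$
The Selberg upper bound sieve, together with the exact local counts, then gives
$$S(\mathcal{A}; P(z)) \leq \frac{q^{N - \deg a}}{G(z)}, \qquad G(z) = \sum_{\substack{d \text{ monic squarefree} \\ (d,a)=1,\, \deg d \leq z}} \frac{1}{\phi(d)}.$$

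To bound $G(z)$ from below, I would invoke a squarefree variant of Lemma \ref{bound for phi} (differing from it by a bounded, explicit Euler-product factor that comes from removing the contributions of prime powers). This yields $G(z) \gg_a z$, with the leading coefficient of the shape $(\phi(a)/|a|) \cdot c$ for an explicit constant $c > 1$ involving the function-field zeta values $\zeta_q(2)\zeta_q(3)/\zeta_q(6)$. Choosing $z = \lfloor (N - \deg a + 1)/2 \rfloor$, using $|a| = q^{\deg a}$, and absorbing the $O(q^z/z)$ secondary term then produces
$$\pi(N; a, b) \leq \frac{2 q^N}{\phi(a)(N - \deg a + 1)}.$$

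The main technical hurdle is pinning down the constant to exactly $2$ rather than an unspecified $O(1)$. In the integer setting this sharp constant is the content of Montgomery--Vaughan's refinement of Selberg's sieve; in $\F_q[t]$, the corresponding refinement can be carried out either by a direct Euler-product calculation (the exact vanishing of the local remainders is a distinctly function-field feature and makes this bookkeeping cleaner than its integer counterpart) or, alternatively, by appealing to the large-sieve inequality for multiplicative characters developed in Section \ref{sec:Large sieve}. Once the lower bound on $G(z)$ is in hand with the right constant, the rest of the argument is a routine adaptation of classical sieve theory.
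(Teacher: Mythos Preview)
The paper does not give its own proof of this lemma; it is quoted from Hsu \cite{Hsu-Largesieve} and used as a black box. Hsu's argument (consistent with the theme of that paper) proceeds via the \emph{large sieve} in the Montgomery--Vaughan style, not via the Selberg sieve, so your outline is a genuinely different route.

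Your Selberg-sieve plan is sound in structure, and you are right that the exact vanishing of the local remainders is the decisive function-field simplification. Two points, however. First, the additive $O(q^{z}/z)$ is superfluous: a monic irreducible of degree $N>z$ is automatically coprime to $P(z)$, so $\pi(N;a,b)\le S(\mathcal A;P(z))$ on the nose. Second, and more substantively, the route you propose for bounding $G(z)$ below---a squarefree variant of Lemma~\ref{bound for phi}---only delivers $G(z)=c\,z+O_a(1)$, and that $O_a(1)$ spoils the constant: you end up with $2+o(1)$ in place of the exact~$2$, non-uniformly in $N-\deg a$. Neither an ``Euler-product calculation'' nor Section~\ref{sec:Large sieve} is needed to repair this. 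The clean argument uses the elementary identity $\mu^{2}(d)/\phi(d)=\sum_{\mathrm{rad}(m)=d}|m|^{-1}$, which gives
\[
G(z)\ \ge\ \sum_{\substack{(m,a)=1\\ \deg m\le z}}\frac{1}{|m|}\ \ge\ \frac{\phi(a)}{|a|}\,(z+1),
\]
the last step (for $z\ge\deg\mathrm{rad}(a)$; small $z$ can be handled directly) by inclusion--exclusion together with the exact count $\sum_{\deg m=j}|m|^{-1}=1$ in $\F_q[t]$. Taking $z=\lfloor(N-\deg a)/2\rfloor$ then yields $G(z)\ge\tfrac{\phi(a)}{2|a|}(N-\deg a+1)$ and hence the stated bound with the sharp constant~$2$, with no appeal to a Montgomery--Vaughan refinement. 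So your intuition that exact remainders make the function-field case cleaner than the classical one is correct; what needs sharpening is the $G(z)$ estimate itself, not an external large-sieve input.
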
	
	
Next, we derive what can be thought of as some version of Perron's formula over $\F_{q}[t]$. This is derived using  Cauchy's residue theorem and will play a crucial role in proving our main theorem. 	
	\begin{lemma}\label{Perron}
		Let $\sum\limits_{f \, \text{monic}} \frac{a(f)}{|f|^{s}}$ be a Dirichlet series with $a(f)\ll |f|^\epsilon$, for any $\epsilon>0$. Let $N \in \mathbb{R}\setminus\mathbb{Z}$. Then for $\sigma>1$, and any $M \geq N$,
		\begin{align*}
			\sum_{\d(f)\leq N}a(f)=\frac{1}{2\pi i} \int_{\sigma-iT}^{\sigma+iT}\sum_{\d(f)\leq M}\frac{a(f)}{|f|^s}\frac{q^{Ns}}{s} ds+O\left(\frac{q^{\sigma N}}{T}\right).
		\end{align*}
		
	\end{lemma}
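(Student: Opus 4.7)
The plan is to reduce the identity to the classical discontinuous Perron integral and then carefully bound the resulting error. Since the inner sum on the right is finite and the integration range is compact, I can interchange sum and integral to obtain
\[
\frac{1}{2\pi i}\int_{\sigma-iT}^{\sigma+iT}\sum_{\d(f)\leq M}\frac{a(f)}{|f|^{s}}\frac{q^{Ns}}{s}\,ds=\sum_{\d(f)\leq M}a(f)\,I(N-\d(f),T),
\]
where, for $r\in\mathbb{R}$, I denote
\[
I(r,T):=\frac{1}{2\pi i}\int_{\sigma-iT}^{\sigma+iT}\frac{q^{rs}}{s}\,ds.
\]

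Next, I would invoke the standard truncated Perron-type estimate for $I(r,T)$: by completing the vertical segment with a large rectangular contour, shifted to the left when $r>0$ (thereby enclosing the simple pole of $1/s$ at $s=0$ with residue $1$) and to the right when $r<0$ (enclosing no pole), and controlling the horizontal and far vertical segments directly, one obtains
\[
I(r,T)=\mathbf{1}_{r>0}+O\!\left(\frac{q^{\sigma r}}{T\,|r|\log q}\right)
\]
with an absolute implied constant. Because $N\notin\mathbb{Z}$ while $\d(f)\in\mathbb{Z}_{\geq 0}$, the quantity $r=N-\d(f)$ is never zero, and the indicator $\mathbf{1}_{r>0}$ fires exactly when $\d(f)\leq N$. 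Summing the main terms therefore recovers $\sum_{\d(f)\leq N}a(f)$, which is the desired left-hand side of the lemma.

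It remains to control the aggregate error. Using the hypothesis $a(f)\ll|f|^{\epsilon}=q^{\epsilon\d(f)}$ (for arbitrarily small $\epsilon>0$) and the fact that there are $q^{k}$ monic polynomials of degree $k$, grouping by degree yields
\[
\sum_{\d(f)\leq M}|a(f)|\cdot\frac{q^{\sigma(N-\d(f))}}{T\,|N-\d(f)|\log q}\ll\frac{q^{\sigma N}}{T\log q}\sum_{k=0}^{M}\frac{q^{(1+\epsilon-\sigma)k}}{|N-k|}.
\]
Choosing $\epsilon<\sigma-1$ makes the geometric factor $q^{(1+\epsilon-\sigma)k}$ summable; since $N$ has positive distance $\delta_{N}>0$ from the set of integers, $|N-k|\geq\delta_{N}$ for the two closest indices and $|N-k|\geq 1$ for all others, so the remaining sum is $O(1)$. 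This gives the desired bound $O(q^{\sigma N}/T)$, with an implied constant depending on $N$ only through $\delta_{N}$.

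The main technical obstacle is the truncated Perron estimate for $I(r,T)$ itself. Although this is classical for the usual variable $y>0$, here one must track its dependence on the real parameter $r$, noting that the factor $1/(T\,|r|\log q)$ arises from integrating $q^{rs}=e^{rs\log q}$ along the horizontal sides of the closed contour, and one must verify that the contribution of the far vertical segment vanishes as it is sent to infinity. Once this lemma is in place, the rest of the argument is a routine interchange of sum and integral together with the geometric tail estimate above.
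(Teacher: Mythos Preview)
Your proof is correct and follows essentially the same route as the paper: interchange sum and integral, apply the truncated Perron estimate for $\frac{1}{2\pi i}\int_{\sigma-iT}^{\sigma+iT}\frac{x^{s}}{s}\,ds$ with $x=q^{N}/|f|$, and bound the resulting error using absolute convergence of the Dirichlet series. The only cosmetic difference is that the paper first reduces without loss of generality to $N=\lfloor N\rfloor+\tfrac12$ (so that $|N-\deg f|\ge \tfrac12$ uniformly) and then quotes Tenenbaum's form $O\!\bigl(x^{\sigma}/(1+T|\log x|)\bigr)$ of the error, whereas you keep a general $N\notin\mathbb{Z}$ and absorb the dependence on $\delta_{N}=\mathrm{dist}(N,\mathbb{Z})$ into the implied constant; both are fine for the application, where one takes $N=M+\tfrac12$.
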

	
	\begin{proof}
		Let $\lfloor N\rfloor=N_0$. Note that\begin{align*}
				\sum_{\deg (f)\leq N}a(f)=\sum_{N_0+\frac{1}{2}}a(f).
		\end{align*}
	Hence without loss of generality we assume that $N=N_0+\frac{1}{2}$. We have 
		\begin{align}
			\int_{\sigma-iT}^{\sigma+iT}\sum_{\d(f)\leq M}\frac{a(f)}{|f|^s}\frac{q^{Ns}}{s} ds&= \sum_{\d(f)\leq M}a(f)\int_{\sigma-iT}^{\sigma+iT}\frac{(q^{N}/|f|)^s}{s} ds. \label{eq:perron1}
			%&=	\sum_{\d(f)\leq N}a(f)+O\left(\sum_{\d(f)\leq M}|f|^{\epsilon}\frac{(q^{N}/|f|)^\sigma}{T|\log(q^{N}/|f|)}\right).
		\end{align}
	 Writing the above integral as 
	 $\displaystyle
	 	\int_{\sigma-iT}^{\sigma+iT}\frac{x^s}{s} ds$, we have from $(2.7)$ on p. $219$ of Tenenbaum \cite{Tenenbaum},
	 \begin{align*}
	 \frac{1}{2\pi i}	\int_{\sigma-iT}^{\sigma+iT}\frac{(q^{N}/|f|)^s}{s} ds= h\left(\frac{q^N}{|f|}\right)+O\left(\frac{q^{\sigma N}}{|f|^\sigma\big(1+T|\log (q^N/|f|)|\big)}\right),
	 \end{align*}
	 where, 	
	 \begin{align*}
	 	h(x)=\left\{\begin{array}{cc}
	 		1 & \text{ if }x>1\\
	 		1/2 & \text{ if }x=1\\
	 		0 & \text{ if } 0<x<1.
	 	\end{array}\right.
 \end{align*}	
Since $\frac{q^{N_0+\frac{1}{2}}}{|f|}$ is never $1$, we obtain 
\begin{align} 
 	%\int_{\sigma-iT}^{\sigma+iT}\sum_{\d(f)\leq M}\frac{a(f)}{|f|^s}\frac{q^{Ns}}{s} ds&= 
 	\frac{1}{2\pi i}\sum_{\d(f)\leq M}a(f)\int_{\sigma-iT}^{\sigma+iT}\frac{(q^{N}/|f|)^s}{s} ds
 	&=	\sum_{\d(f)\leq N}a(f)\nonumber\\
 	&+O\left(q^{\sigma N}\sum_{\d(f)\leq M}\frac{|a(f)|}{|f|^\sigma\big(1+T|\log(q^{N}/|f|)\big)}\right).\label{eq:perron2}
 \end{align} 
We now analyze the error term above in more detail. If $|f|\leq q^{N_0}$,
we find that the logarithm in \eqref{eq:perron2} is bounded below as follows.
$$\log \left(\frac{q^N}{|f|}\right)=\log \left(\frac{q^{N_0+\frac{1}{2}}}{|f|}\right)\geq \frac{1}{2}\log q.$$
Similarly if $|f| >q^{N_0}+1$, we can again  see that
$$\log \left(\frac{q^N}{|f|}\right)\geq \frac{1}{2}\log q.$$
Thus, the logarithm term in \eqref{eq:perron2} is always $\gg 1$, yielding
\begin{align*}
	\sum_{\deg f\leq N}a(f)=	\int_{\sigma-iT}^{\sigma+iT}\sum_{\d(f)\leq M}\frac{a(f)}{|f|^s}\frac{q^{Ns}}{s} ds+O\left(q^{\sigma N}\sum_{\deg f\leq M}\frac{|a(f)|}{|f|^\sigma}\left(\frac{1}{1+T}\right)\right),
\end{align*}
upon combining \eqref{eq:perron1} and \eqref{eq:perron2}. As $\sum_f\frac{a(f)}{|f|^s}$ converges absolutely for $\Re(s)=\sigma$, we obtain the desired result.

	\end{proof}

%	Now, let for any $f\in \F_{q}[t]$, $\mathcal{M}(f)$ be the group of multiplicative characters on $\F_{q}[t]/(f)$. A multiplicative character $\chi \in \mathcal{M}(f)$ is said to be \textit{primitive} if there is no $d|f$ such that $\chi$ is induced by a character modulo $d$. We denote the set of all primitive multiplicative characters modulo $f$ by $\mathcal{M}^{\ast}(f)$. On the multiplicative characters, we prove a large sieve inequality in the next section.
	We end the current section by stating the orthogonality property of multiplicative characters. For more details, the reader may refer to \cite[ch. 4]{Ro}.
	\begin{lemma}
		\label{orthogonality}
		For any two characters $\chi_{1}$ and $\chi_{2}$ modulo $m$, we have
	\begin{enumerate}
		\item[(i)] \begin{equation*}
			\frac{1}{\phi(m)}
			\sum\limits_{g \pmod* m}
			\overline{\chi_{1}}(g) \chi_{2}(g)
			=
			\begin{cases}
				1, \,\text{if} \, \chi_{1}=\chi_{2};\\
				0, \, \text{otherwise}.
			\end{cases}
		\end{equation*}
	\item[(ii)] \begin{equation*}
		\frac{1}{\phi(m)}
		\sum\limits_{\chi \pmod* m}
		\chi(f)
		\overline{\chi(g)} 
		=
		\begin{cases}
			1, \,\text{if} \, f \equiv g \pmod* m;\\
			0, \, \text{otherwise}.
		\end{cases}
	\end{equation*}
	\end{enumerate}	
		where $\overline{\chi}$	is defined by $\overline{\chi_{1}}(h)=\overline{\chi_{1}(h)}$, for all $h\in \F_{q}[t]$ and $f, g \in \F_{q}[t] $ are coprime to $m$.
		
	\end{lemma}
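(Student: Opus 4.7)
The plan is to treat this as a standard character-theoretic fact for the finite abelian group $G = (\F_q[t]/(m))^*$, whose order is $\phi(m)$. The two assertions are dual to one another and both reduce to the single identity
\[
\sum_{g \in G} \chi(g) = \begin{cases} |G|, & \chi = \chi_0, \\ 0, & \chi \neq \chi_0, \end{cases}
\]
for any character $\chi$ on $G$, where $\chi_0$ denotes the trivial (principal) character.

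To establish this key identity, I would argue as follows. If $\chi = \chi_0$, every term is $1$ and the sum is $|G| = \phi(m)$. If $\chi \neq \chi_0$, pick some $g_0 \in G$ with $\chi(g_0) \neq 1$. Writing $S = \sum_{g \in G} \chi(g)$ and reindexing via $g \mapsto g_0 g$ (a bijection of $G$), I get $S = \sum_g \chi(g_0 g) = \chi(g_0) S$, so $(1 - \chi(g_0)) S = 0$, forcing $S = 0$.

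For part (i), note that $\overline{\chi_1}\chi_2$ is itself a character of $G$, and it is the trivial character if and only if $\chi_1 = \chi_2$. Applying the identity above to $\chi = \overline{\chi_1}\chi_2$ and dividing by $\phi(m)$ yields the claim. For part (ii), since $f$ and $g$ are coprime to $m$, the residue class $h := fg^{-1} \pmod{m}$ is a well-defined element of $G$, and $\chi(f)\overline{\chi(g)} = \chi(h)$. The sum in question thus becomes $\frac{1}{\phi(m)}\sum_{\chi \pmod{m}} \chi(h)$. Now I would invoke the dual identity $\sum_{\chi \in \widehat{G}} \chi(h) = |\widehat{G}|$ if $h = 1$ and $0$ otherwise, which is proved by exactly the same trick: if $h \neq 1$, pick some character $\chi_0$ with $\chi_0(h) \neq 1$ (existence follows from the standard fact that $\widehat{G}$ separates points of a finite abelian group $G$, e.g.\ via the decomposition of $G$ into cyclic factors), reindex $\chi \mapsto \chi_0 \chi$, and conclude that the sum must vanish. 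Since $|\widehat{G}| = |G| = \phi(m)$, this gives the claim.

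The only nontrivial ingredient is the separation statement used in (ii), namely that for any $h \neq 1$ in $G$ there is some character with $\chi(h) \neq 1$. This is routine once one writes $G$ as a product of cyclic groups $\prod_i \Z/n_i\Z$; if $h$ has a nontrivial coordinate in the $i$-th factor, one takes $\chi$ to be the character sending a generator of that factor to a primitive $n_i$-th root of unity and acting trivially on the other factors. No feature of $\F_q[t]$ beyond the finiteness of $(\F_q[t]/(m))^*$ is used, so the result is purely formal; indeed, the author's reference to \cite[ch. 4]{Ro} covers this.
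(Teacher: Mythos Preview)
Your argument is correct and is the standard proof of character orthogonality for a finite abelian group, specialized to $G=(\F_q[t]/(m))^\ast$. Note that the paper does not actually give a proof of this lemma; it merely states the result and refers the reader to \cite[ch.~4]{Ro}, so there is nothing to compare against beyond observing that your write-up supplies exactly the routine verification the paper omits.
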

	
	\section{The Large sieve inequality over $\F_{q}[t]$}
	\label{sec:Large sieve}

	%	To compute the sum for the middle partition we come with primitive multiplicative characters with relatively large conductor and we handle the sum over such characters with the help of large sieve method over function field. Now 
	The large sieve inequality has proved to be a versatile and powerful tool in number theory. The Bombieri-Vinogradov theorem can be considered as one of the finest applications of the large sieve method. The classical large sieve was first introduced by Linnik \cite{Li} around 1941 in the context of solving Vinogradov's hypothesis related to the size of the least quadratic non-residue modulo a prime. It was further developed by  contributions of
	Bombieri, Davenport, Halberstam, Gallagher, and many others. Analogous results on the large sieve inequality over number fields have been proved by Huxley (\cite{Hu1}, \cite{Hu2}) and Hinz (\cite{Hi2}, \cite{Hi1}). 
	%Compare to the other direction, results on large sieve inequality over function fields or even for polynomial rings over finite fields are seems very limited.
	In 1971,  Johnsen \cite{Jo} established an analogue of the large sieve inequality for additive characters, and using similar techniques as introduced by Gallagher in \cite{Ga}, extended the theory to multiplicative characters as well. However, it appears that one requires to impose additional conditions on the set of moduli (see \cite{Jo}, p. 173). More generally, Hsu \cite{Hsu-Largesieve} proved a function field analogue of the large sieve inequality in arbitrary dimension. Recently Baier and Singh (\cite{BaSi}, \cite{BaSi2}) extended the large sieve inequality to square moduli and power moduli. In particular, \cite{BaSi} yields results on the large sieve inequality with additive characters in arbitrary dimension with a restricted set of moduli. For our purpose, we concentrate on the dimension one case (cf. \cite{BaSi}, Corollary 6.5).
	
	%	 We found such results for more general setup in recent work by Baier and Singh \cite{BaSi}, \cite{BaSiErratum}, \cite{BaSi2}, where they studied large sieve inequality for additive characters over rational function fields. In particular, the article \cite{BaSi} provides results on large sieve inequality with additive characters in arbitrary dimension with a restricted set of moduli, and for our purpose, we concentrate on the dimension one case (cf. Corollary 6.4 in \cite{BaSi}). Before Baier and Singh,  and following Hsu, Baier and Singh (\cite{BaSi}, \cite{BaSi2}) extended the large sieve inequality for square moduli and for power moduli of characters.
	
	Continuing with the same notation and definitions as in \cite{Hsu-Largesieve} and \cite{BaSi}, let $\F_{q}(t)$ be the rational function field and $\F_{q}(t)_{\infty}$ be the completion of $\F_{q}(t)$ at the prime at infinity denoted by $\infty$. The absolute norm denoted by $|.|_{\infty}$ is defined as
	\[
	\bigg|~\sum\limits_{i=-\infty}^{n} a_{i}t^{i} \bigg|_{\infty}=q^{n},
	\]
	when $0\neq a_{n} \in \F_{q}$. 
	Over $\F_{q}[t]$, this defines the usual norm of a polynomial. Let
	$
	\Tr: \, \F_{q} \rightarrow \F_{p}
	$ be the usual trace map,
	where $p$ is the characteristic of the field $\F_{q}$. Consider the non-trivial group homomorphism 
	$E: \F_{q} \rightarrow \mathbb{C}^{\ast} $ defined as
	\begin{align*}
		E(x)= 
		\exp 
		\left(   \frac{2 \pi i}{p} \Tr(x)    \right),
	\end{align*}
	and define a map $e: \F_{q}(t)_{\infty} \rightarrow \mathbb{C}^{\ast}$ as
	\begin{align*} e\left(\sum\limits_{i=-\infty}^{n} a_{i}t^{i} \right)= E(a_{-1}).
	\end{align*}
	Using the additivity property of the trace function we see that, $E$ is a nontrivial additive character on $\F_{q}$ and consequently $e$ becomes an additive character on $\F_{q}[t]$. In particular, for some fixed $f\in \F_q[t]$, let $\sigma_f:\, \F_{q}[t] \rightarrow \mathbb{C}^{\ast}$, be the additive character
	\begin{align*}
	\sigma_f(g) =e \bigg(    \frac{g}{f}      \bigg).\label{def sigmaf}
	\end{align*}
%We may extend $\sigma_f$ to $\F_{q}[t]$ by putting $\sigma_f(g)=\sigma_f(\overline{g})$, where $g\equiv\overline{g}\pmod{f}$ if $(g,f)=1$ and zero otherwise. It can be seen that $\sigma_f$ is a primitive character modulo $f$.  
With the above notation in mind, we record the following result by Baier and Singh for dimension one. Considering the $n=1$ case of Corollary $6.1$ of \cite{BaSi}, we are able to obtain a version of Corollary $6.5$ of \cite{BaSi} with $q+1$ replaced by $q$ as noted below. This is significant to us since we will keep $q$ fixed and $N\rightarrow\infty$.
	\begin{theorem}
		\label{Baier}
		Let $Q\geq 1$ be any natural number. Then
		\begin{equation}
			\label{additive bound}
			\sum\limits_{\substack{\deg (f) \leq Q\\ f\, \text{monic}}}
			\sum\limits_{\substack{h \pmod* f\\ (h,f)=1}}
			\bigg|
			\sum\limits_{\deg (g) \leq N}
			a_{g}e \bigg(    g\frac{h}{f}      \bigg)
			\bigg|^{2}
			\ll
			\left(q^{N}+q^{2Q}\right)
			\sum\limits_{\deg (g) \leq N}
			|a_{g}|^{2},
		\end{equation}
		where $a_{g} \in \mathbb{C}$ for all $g$.	
	\end{theorem}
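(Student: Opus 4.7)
The plan is to derive Theorem \ref{Baier} from the multi-dimensional large sieve inequality of Baier and Singh \cite[Corollary 6.1]{BaSi} specialized to dimension $n=1$. The argument follows the classical three-step architecture of analytic large sieves: a duality reduction, a well-spacing estimate for Farey fractions in $\F_q(t)_\infty/\F_q[t]$, and the analytic large sieve for well-spaced frequencies.

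First, I would invoke the duality principle to transform \eqref{additive bound} into the equivalent estimate
\[
\sum_{\deg(g)\le N}\bigg|\sum_{\deg(f)\le Q,\, f\text{ monic}}\sum_{\substack{h\pmod{f}\\ (h,f)=1}} b_{h,f}\, e\!\left(g\tfrac{h}{f}\right)\bigg|^2 \ll (q^N + q^{2Q})\sum_{h,f}|b_{h,f}|^2.
\]
This reduces the problem to controlling the mean square of exponential sums at the prescribed Farey frequencies $h/f$ with $(h,f)=1$ and $\deg(f)\le Q$, viewed as elements of the "circle" $\F_q(t)_\infty/\F_q[t]$.

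Next, I would establish that these frequencies are well-separated: for distinct reduced fractions $h_1/f_1$ and $h_2/f_2$ with $\deg(f_i)\le Q$, the polynomial $h_1 f_2 - h_2 f_1 \in \F_q[t]$ is nonzero, so
\[
\left|\frac{h_1}{f_1} - \frac{h_2}{f_2}\right|_\infty = \frac{|h_1 f_2 - h_2 f_1|_\infty}{|f_1 f_2|_\infty} \ge q^{-2Q}.
\]
This is the function-field analogue of Farey spacing. With this in hand, one invokes the analytic large sieve for $\delta$-separated frequencies $\{x_j\}$ in $\F_q(t)_\infty/\F_q[t]$, which asserts
\[
\sum_j\bigg|\sum_{\deg(g)\le N}a_g\, e(g x_j)\bigg|^2 \ll (q^N + \delta^{-1})\sum_g|a_g|^2.
\]
Taking $\delta = q^{-2Q}$ then produces the bound appearing on the right-hand side of \eqref{additive bound}.

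The main obstacle, and the reason for specializing \cite[Corollary 6.1]{BaSi} rather than quoting \cite[Corollary 6.5]{BaSi} directly, is to obtain the sharper coefficient $q^{2Q}$ in place of the weaker $(q+1)\, q^{2Q-1}$ that would otherwise arise. This refinement rests on a tighter count of Farey fractions inside a single arc of radius $\delta$, which exploits the ultrametric property of $|\cdot|_\infty$ on $\F_q(t)_\infty$ to replace the loose factor $q+1$ by $q$. Verifying this improved count in the one-dimensional setting is the only non-routine step in the argument; once it is in place, the remainder is a direct application of the machinery already assembled in \cite{BaSi}.
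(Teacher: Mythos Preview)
Your proposal and the paper take the same route: both derive the theorem by specializing Corollary~6.1 of Baier--Singh \cite{BaSi} to dimension $n=1$. The paper in fact gives no proof at all beyond the sentence ``Considering the $n=1$ case of Corollary~6.1 of \cite{BaSi}, we are able to obtain a version of Corollary~6.5 of \cite{BaSi} with $q+1$ replaced by $q$,'' so your outline is strictly more detailed than what appears there.

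Two minor remarks on the extra detail you supply. First, the duality step is not needed: the inequality is already in the form $\sum_{j}\big|\sum_{g} a_g\, e(g x_j)\big|^2$, so once the Farey spacing $\|h_1/f_1 - h_2/f_2\|_\infty \ge q^{-2Q}$ is established, the analytic large sieve for $\delta$-spaced points applies directly with $\delta^{-1}=q^{2Q}$. Second, your attribution of the $q$ versus $q+1$ improvement to ``a tighter count of Farey fractions inside a single arc'' is a bit off the mark: the spacing bound already yields $\delta^{-1}=q^{2Q}$ on the nose, so no counting refinement is required. The looser constant in \cite[Corollary~6.5]{BaSi} arises from how the general $n$-dimensional statement is packaged, and simply does not appear when one works directly from Corollary~6.1 in dimension one. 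Neither of these affects the correctness of your approach.
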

%We next state the following consequences of Lemma 3, 4 of Johnsen \cite{Jo}.
	\begin{lemma}\cite[Lemma 3]{Jo}\label{res1}
		Let $\chi$ be a primitive character modulo $f$. Let
		\begin{equation}
			\tau(\overline{\chi}) = 
			\sum\limits_{h \pmod* f}
			\overline{\chi}(h) e\left(\frac{h}{f}\right).\label{def:tau}
		\end{equation}
	 For any $g \in \F_{q}[t]$, we have 
		\begin{align}
			\chi(g)\tau(\overline{\chi})=
			\sum\limits_{\substack{h \pmod* f\\ (h,f)=1}}
			\overline{\chi}(h)  e\left(\frac{gh}{f}\right). \label{eq: chitau}
		\end{align}
	
		\end{lemma}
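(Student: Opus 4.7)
I would prove this in the classical style used for Gauss sums over $\mathbb{Z}$, adapted to $\F_q[t]$, by splitting on whether $g$ is coprime to $f$.

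First, observe that the sum defining $\tau(\overline{\chi})$ is effectively over $h \pmod* f$ with $(h,f)=1$, since $\overline{\chi}(h)=0$ otherwise; this normalization will let me compare both sides term-by-term after a change of variables.

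\textbf{Case 1: $(g,f)=1$.} Here $g$ has a multiplicative inverse modulo $f$, so the map $h \mapsto g^{-1}h$ is a bijection of $(\F_q[t]/f)^*$. Substituting in the right-hand side of \eqref{eq: chitau} gives
\[
\sum_{\substack{h \pmod* f\\(h,f)=1}} \overline{\chi}(g^{-1}h)\, e\!\left(\tfrac{h}{f}\right)
= \chi(g)\sum_{\substack{h \pmod* f\\(h,f)=1}} \overline{\chi}(h)\, e\!\left(\tfrac{h}{f}\right)
= \chi(g)\tau(\overline{\chi}),
\]
using $\overline{\chi}(g^{-1})=\chi(g)$. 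This matches the left-hand side.

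\textbf{Case 2: $(g,f)\neq 1$.} Now $\chi(g)=0$, so it suffices to show the right-hand sum vanishes. Let $d=\gcd(g,f)$, write $g=dg_1$, $f=df_1$ with $(g_1,f_1)=1$ and $\deg(f_1)<\deg(f)$. Then $gh/f = g_1 h/f_1$, so by additivity of $e$ under the embedding $\F_q(t)\hookrightarrow \F_q(t)_\infty$,
\[
S := \sum_{\substack{h \pmod* f\\(h,f)=1}} \overline{\chi}(h)\, e\!\left(\tfrac{g_1 h}{f_1}\right).
\]
Because $\chi$ is primitive modulo $f$, it is \emph{not} induced from any character modulo a proper divisor of $f$, in particular not modulo $f_1$. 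This gives an element $a \in \F_q[t]$ with $a \equiv 1 \pmod* {f_1}$, $(a,f)=1$, and $\chi(a)\neq 1$. Substituting $h \mapsto ah$ (again a bijection of $(\F_q[t]/f)^*$) and noting that $a\equiv 1\pmod* {f_1}$ implies $e(g_1 ah/f_1)=e(g_1 h/f_1)$, I get $S=\overline{\chi}(a)\,S$, and hence $S=0$.

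The only non-routine point is the production of the witness $a$ in Case 2; this is the function field analogue of the standard fact that a primitive Dirichlet character modulo $f$ cannot be constant on the subgroup of units $\equiv 1 \pmod* {f_1}$ for any proper divisor $f_1\mid f$. This follows from the definition of primitivity recalled in Section~1 (``$\chi$ is not induced by a character mod $d'$ for any proper $d'\mid f$''), applied to $d'=f_1$, together with the fact that $(\F_q[t]/f)^* \twoheadrightarrow (\F_q[t]/f_1)^*$ is surjective so that the kernel is exactly the subgroup on which an induced character would be trivial. Once this is in hand, both cases are short and the identity \eqref{eq: chitau} follows.
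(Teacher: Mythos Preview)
Your proof is correct and follows essentially the same approach as the paper: both split into the coprime and non-coprime cases, handle Case~1 by the obvious change of variables, and in Case~2 produce a witness $a\equiv 1\pmod{f_1}$ with $(a,f)=1$ and $\chi(a)\neq 1$ from primitivity. The only minor difference is cosmetic: in Case~2 the paper first decomposes $h=f_1\ell+c$ and shows each inner sum $S(c)=\sum_{\ell}\overline{\chi}(f_1\ell+c)$ vanishes via the substitution $\ell\mapsto a\ell$, whereas you apply the substitution $h\mapsto ah$ directly to the full sum; both routes use the same idea and the same witness.
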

	\begin{proof}
	%The case when $f|g$ is trivial as both expressions are zero provided $\chi$ is primitive, i.e., $\chi\neq \chi_0$. Also f
	Consider the case $(g,f)=1$. Let $\overline{g}\in \F_q[t]$ satisfy $g\overline{g}\equiv 1\pmod*f$. Thus, in this case, we have
	\begin{align*}
			\chi(g)\tau(\overline{\chi}) = 
		\sum\limits_{h \pmod* f}
		\overline{\chi}(h\overline{g})e\left(\frac{h}{f}\right)=	\sum\limits_{m \pmod* f}
		\overline{\chi}(m) e\left(\frac{mg}{f}\right).
	\end{align*}
We now consider $(g,f)=d$; $d>1$. Clearly the left-hand side of \eqref{eq: chitau} is zero. %We want to show that  $\sum\limits_{\substack{h \pmod* f\\ (h,f)=1}}
%\overline{\chi}(h)  e\left(\frac{gh}{f}\right)=0$.\\
 Let $f=f_1d$ and $g=g_1d$. Then, applying the division algorithm on $h$ modulo $f_1$ to write $h=f_1\ell+c$, we have that the right-hand side of \eqref{eq: chitau} is
 \begin{align*}
 	\sum\limits_{\substack{h \pmod* f\\ (h,f)=1}}
 	\overline{\chi}(h)  e\left(\frac{gh}{f}\right)=\sum_{c\pmod*{f_1}}e\left(\frac{g_1c}{f_1}\right)\sum_{\ell\pmod*d}\overline{\chi}(f_1\ell+c).
 \end{align*}
Let $S(c):=\sum_{\ell\pmod*d}\overline{\chi}(f_1\ell+c)$. Note that $S(c+kf_1)=S(c)$, for any $k\in \F_q[t]$. For $a\in \F_q[t]$ with $(a,f)=1$ and $a\equiv 1\pmod*{f_1}$,
\begin{align*}
	\overline{\chi}(a)S(c)&=\overline{\chi}(a)\sum_{\ell\pmod*d}		\overline{\chi}(f_1\ell+c)\\
	&=\sum_{a\ell\pmod*d}\overline{\chi}(f_1a\ell+ac)\\
	&=\sum_{\ell'\pmod*d}\overline{\chi}(f_1\ell'+c)=S(c).
\end{align*}
But, $\overline{\chi}(a)\neq 1$ for all such $a$, as $\chi$ is primitive. Thus, $S(c)=0$. This completes the proof.
	\end{proof}
	
	\begin{lemma}\cite[Lemma 2]{Jo}
		\label{res2}
		Let $\chi$ be a primitive character modulo $f$. Then $|\tau(\chi)|^{2}=|f|$, where $\tau$ is defined as in Lemma \ref{res1}.
	\end{lemma}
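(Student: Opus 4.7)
The plan is to follow the classical Gauss-sum argument, with Lemma \ref{res1} playing the role of the identity $\chi(g)\tau(\overline{\chi}) = \sum_{h}\overline{\chi}(h)e(gh/f)$ that is usually proved by hand in the integer case. I will first establish $|\tau(\overline{\chi})|^{2}=|f|$ and then note that the same argument applied to $\overline{\chi}$ (which is also primitive modulo $f$) yields $|\tau(\chi)|^{2}=|f|$.

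To carry this out, I would start from Lemma \ref{res1}, which says that for every $g\in\F_{q}[t]$,
\[
\chi(g)\tau(\overline{\chi})=\sum_{\substack{h\pmod* f\\(h,f)=1}}\overline{\chi}(h)\, e\!\left(\frac{gh}{f}\right).
\]
Taking absolute squares, expanding, and summing over $g$ modulo $f$, I would obtain
\[
|\tau(\overline{\chi})|^{2}\sum_{g\pmod* f}|\chi(g)|^{2}
=\sum_{\substack{h_{1},h_{2}\pmod* f\\(h_{i},f)=1}}\overline{\chi}(h_{1})\chi(h_{2})\sum_{g\pmod* f} e\!\left(\frac{g(h_{1}-h_{2})}{f}\right).
\]
The left-hand side equals $\phi(f)\,|\tau(\overline{\chi})|^{2}$ since $|\chi(g)|^{2}=\mathbf{1}_{(g,f)=1}$. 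For the right-hand side I would invoke additive orthogonality on $\F_{q}[t]/(f)$: the inner sum over $g$ equals $|f|$ when $h_{1}\equiv h_{2}\pmod* f$ and vanishes otherwise. This collapses the double sum to $|f|\sum_{(h,f)=1}|\chi(h)|^{2}=|f|\phi(f)$, and dividing by $\phi(f)$ gives $|\tau(\overline{\chi})|^{2}=|f|$.

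The only step that needs genuine verification is the additive orthogonality identity $\sum_{g\pmod* f} e(ag/f)=|f|\cdot\mathbf{1}_{a\equiv 0\pmod* f}$, since the additive character $e$ is built through the trace map on $\F_{q}$ rather than via $\exp(2\pi i\,\cdot\,)$; but this is standard and follows by unpacking the definition of $e$ and using that $E\colon\F_{q}\to\mathbb{C}^{\ast}$ is a nontrivial character so that the induced character on $\F_{q}[t]/(f)$ given by $g\mapsto e(ag/f)$ is nontrivial precisely when $a\not\equiv 0\pmod* f$. Apart from this checkpoint, the argument is purely formal, and no further obstacles are anticipated.
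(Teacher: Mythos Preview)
Your argument is correct and matches the paper's proof essentially line for line: start from Lemma~\ref{res1}, square, sum over $g\pmod* f$, and apply additive orthogonality to collapse the double sum. The only addition you make is the explicit remark that one passes from $|\tau(\overline{\chi})|^{2}=|f|$ to $|\tau(\chi)|^{2}=|f|$ by noting $\overline{\chi}$ is also primitive, which the paper leaves implicit.
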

\begin{proof}
	Let $g\in \F_q[t]$. We have from \eqref{eq: chitau}
	\begin{align*}
		\left|\chi(g)\tau(\overline{\chi})\right|^2 &=
		\sum\limits_{\substack{h \pmod* f\\ (h,f)=1}}
		\overline{\chi}(h)  e\left(\frac{gh}{f}\right)\sum\limits_{\substack{h' \pmod* f\\ (h',f)=1}}
		\chi(h')  e\left(-\frac{gh'}{f}\right) \\
		&=\sum_{\substack{h,h'\pmod* f\\ (hh',f)=1}}\overline{\chi}(h)\chi(h')  e\left(\frac{g(h-h')}{f}\right).
	\end{align*}
Summing over $g$, we have
\begin{align*}
 \left|\tau(\overline{\chi})\right|^2	\sum_{g\pmod*f}\left|\chi(g)\right|^2 &= \sum_{\substack{h,h'\pmod* f\\ (hh',f)=1}}\overline{\chi}(h)\chi(h')  \sum_{g\pmod*f}e\left(\frac{g(h-h')}{f}\right)\\
	&= |f| \sum_{h\pmod*f}|\chi(h)|^2.
\end{align*}
This proves our claim.
\end{proof}
	Using Theorem \ref{additive bound}, and Lemmas \ref{res1} and \ref{res2}, we now obtain the following function-field analogue of the large sieve inequality with multiplicative characters, which we later make use of.
	
	\begin{theorem}
		\label{multiplicative bound} let $(a_g)$ be a sequence of complex
		numbers and $Q, N \in \mathbb{N}$. Then
			\begin{equation} \label{mult bd}
			\sum\limits_{\substack{\deg (f) \leq Q,\\ f\, \text{monic}}}
			\frac{|f|}{\phi(f)}
			 \sum\limits_{\chi}{\vphantom{\sum\limits_{\chi}}}^\ast
			\bigg|\sum\limits_{\deg (g) \leq N}
			a_{g}\chi (g)\bigg|^{2}
			\ll
			(q^{N}+q^{2Q})
			\sum\limits_{\deg (g) \leq N}
			|a_{g}|^{2},
		\end{equation}
		where $ \sum{\vphantom{\sum\limits_{\chi}}}^\ast$ represents that the sum runs over primitive characters modulo $f$.
		% and
%		\begin{equation*}
%			S(\chi)=
%			\sum\limits_{\deg (g) \leq N}
%			a_{g}\chi (g).
%		\end{equation*}
%	
	\end{theorem}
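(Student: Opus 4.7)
The plan is to derive the multiplicative large sieve of Theorem \ref{multiplicative bound} from its additive counterpart in Theorem \ref{Baier} via the classical Gauss-sum reduction, transplanted to $\F_q[t]$ by means of Lemmas \ref{res1} and \ref{res2}. The overall strategy is to convert each multiplicative character sum into an additive one, sum trivially over characters via orthogonality, and then appeal to the additive large sieve.

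To begin, I would fix a monic $f \in \F_q[t]$ and a primitive character $\chi$ modulo $f$. Multiplying $\sum_{\deg(g)\leq N} a_g \chi(g)$ by the Gauss sum $\tau(\overline{\chi})$ and applying Lemma \ref{res1} to expand $\chi(g)\tau(\overline{\chi})$ as $\sum_{h,\,(h,f)=1}\overline{\chi}(h)e(gh/f)$, an interchange of summation yields
\[
\tau(\overline{\chi}) \sum_{\deg(g) \leq N} a_g \chi(g) = \sum_{\substack{h \pmod* f \\ (h,f)=1}} \overline{\chi}(h) \sum_{\deg(g) \leq N} a_g\, e\!\left(\tfrac{gh}{f}\right).
\]
Taking absolute values squared and invoking Lemma \ref{res2} (which gives $|\tau(\overline{\chi})|^2 = |f|$) produces the identity
\[
|f|\,\left|\sum_{\deg(g) \leq N} a_g \chi(g)\right|^2 = \left|\sum_{\substack{h \pmod* f \\ (h,f)=1}} \overline{\chi}(h) \sum_{\deg(g) \leq N} a_g\, e\!\left(\tfrac{gh}{f}\right)\right|^2.
\]
Summing this over primitive $\chi$ modulo $f$ and using non-negativity to extend the sum to all characters modulo $f$, then expanding the square and applying the orthogonality relation Lemma \ref{orthogonality}(i), the inner character sum collapses to $\phi(f)\sum_{(h,f)=1}|b_h|^2$, where $b_h := \sum_{\deg(g)\leq N} a_g\, e(gh/f)$. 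After dividing through by $\phi(f)$, this rearranges to
\[
\frac{|f|}{\phi(f)}\,\sum_{\chi}^{\ast}\, \left|\sum_{\deg(g) \leq N} a_g \chi(g)\right|^2 \leq \sum_{\substack{h \pmod* f \\ (h,f)=1}} \left|\sum_{\deg(g) \leq N} a_g\, e\!\left(\tfrac{gh}{f}\right)\right|^2.
\]

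Finally, I would sum both sides over monic $f$ with $\deg(f) \leq Q$. The right-hand side then becomes exactly the additive sum of Theorem \ref{Baier}, whose bound by $(q^N + q^{2Q})\sum_{\deg(g)\leq N}|a_g|^2$ delivers the claim. The main subtle point, rather than a genuine obstacle (since all the heavy lifting is absorbed by Theorem \ref{Baier} together with Lemmas \ref{res1} and \ref{res2}), is the essential use of primitivity of $\chi$ in Lemma \ref{res1}: the Gauss-sum identity fails for imprimitive characters, and this is precisely what forces the $\sum^{\ast}$ restriction on the left-hand side of \eqref{mult bd}.
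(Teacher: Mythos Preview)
Your proposal is correct and follows essentially the same route as the paper's proof: you multiply by the Gauss sum, invoke Lemma \ref{res1} to pass to additive characters, use Lemma \ref{res2} to extract the factor $|f|$, extend from primitive to all characters by positivity before applying orthogonality (Lemma \ref{orthogonality}), and then sum over $f$ and apply Theorem \ref{Baier}. The paper carries out exactly these steps in the same order.
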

	\begin{proof}Let us write 
		\begin{equation}
						S(\chi)=
						\sum\limits_{\deg (g) \leq N}
						a_{g}\chi (g).\label{def:S}
		\end{equation}
	Recall the definition \eqref{def:tau} of $\tau$. We have
%		Let $\chi$ be a primitive character modulo $f$. Considering $\tau$ as defined in Lemma \ref{res1}, we have 
		\begin{align*} 
			\tau(\overline{\chi}) S(\chi)
			&=
			\tau(\overline{\chi}) 
			\sum\limits_{\deg (g) \leq N.}
			a_{g} \chi(g)\\
			&=	\sum\limits_{\deg (g) \leq N.}
			a_{g} \chi(g)\tau(\overline{\chi}) \\
%			&=
%			\sum\limits_{\deg (g) \leq N}
%			a_{g} \chi(g)
%			\sum\limits_{\substack{m \pmod* f\\ (m,f)=1}}
%			\overline{\chi}(m) e \bigg( \frac{m}{f}   \bigg)\\
			&=
			\sum\limits_{\deg (g) \leq N}
			a_{g}
			\sum\limits_{\substack{h \pmod* f\\ (h,f)=1}}
			\overline{\chi}(h) e\bigg( g \frac{h}{f} \bigg),
		\end{align*}
		using \eqref{eq: chitau}. Therefore,
		\begin{align*} 
			\tau(\overline{\chi}) S(\chi)
%			&=
%			\sum\limits_{\substack{h \pmod* f\\ (h,f)=1}}
%			\overline{\chi}(h)
%			\sum\limits_{\deg (g) \leq N}
%			a_{g} e\bigg( g \frac{h}{f} \bigg)\\
			&=
			\sum\limits_{\substack{h \pmod* f\\ (h,f)=1}}
			\overline{\chi}(h)
			U(h/f),
		\end{align*}
		where
		\begin{equation*}
			U(x):=\sum\limits_{\deg (g) \leq N}
			a_{g} e\big( g x\big).
		\end{equation*}
		This gives
		\begin{align*} 
			 \sideset{}{^\ast}\sum \limits_{\chi\pmod* f}
			|\tau(\overline{\chi}) S(\chi)|^{2}
			&= \sideset{}{^\ast}\sum \limits_{\chi\pmod* f}
			\bigg|
			\sum\limits_{\substack{h \pmod* f\\ (h,f)=1}}
			\overline{\chi}(h)
			U(h/f)
			\bigg|^{2}\\
%			&\leq
%			\sum\limits_{\chi\pmod* f}
%			\sum\limits_{\substack{h \pmod* f\\ (h,f)=1}}
%			\overline{\chi}(h)
%			U(h/f)
%			\sum\limits_{\substack{h' \pmod* f\\ (h',f)=1}}
%			\chi(h')
%			\overline{U(h'/f)}\\
			&=
			\sum\limits_{\substack{h \pmod* f\\ (h,f)=1\\(h',f)=1}}
			U(h/f) \overline{U(h'/f)}
			\sum\limits_{\chi\pmod* f}
			\overline{\chi}(h)
			\chi(h').
		\end{align*}
		Using Lemma \ref{res2} for the left-hand side and the orthogonality property of multiplicative characters (Lemma \ref{orthogonality}) on the right-hand side, we have
		\begin{align*} 
			|f|
			 \sideset{}{^\ast}\sum \limits_{\chi\pmod* f}
			|S(\chi)|^{2}
			&\leq
			\phi(f)
			\sum\limits_{\substack{h \pmod* f\\ (h,f)=1}}
			|U(h/f)|^{2}.
		\end{align*}
		Therefore,
		\begin{align*} 
			\frac{|f|}{\phi(f)}
			 \sideset{}{^\ast}\sum \limits_{\chi\pmod* f}
			|S(\chi)|^{2}
			&\leq
			\sum\limits_{\substack{h \pmod* f\\ (h,f)=1}}
			|U(h/f)|^{2}.
		\end{align*}
		Finally, summing over all monic polynomials $f$ having degree $\leq Q$ and using Theorem \ref{Baier},
		%\cite[Corollary 6.4]{BaSi} in (Baier, Singh) 
		we obtain the result. 
	\end{proof}
 Observe  that from Lemma 4.2 of Klurman, Mangerel and Ter\"{a}v\"{a}inen \cite{Oleksiy}, one can obtain the   bound 
\begin{align}
\frac{|f|}{\phi(f)} 	 \sum \limits_{\chi \pmod* f}
\bigg|\sum_{\deg (g) \le  N}a_g\chi(g)\bigg|^{2}  
\le  
 ( q^{N} + 2q^{\text{deg}(f)} )	\sum\limits_{\deg (g) \leq N}
|a_{g}|^{2}. 
\end{align}
Summing this over monic polynomials $f$ of degree $t$ and then letting $t$ run from $D$ to $Q$, we obtain 
\begin{align}
			\sum\limits_{\substack{D< \deg (f) \leq Q\\ f\, \text{monic}}} \frac{|f|}{\phi(f)}  \sum \limits_{\chi \pmod* f}
			\bigg|\sum_{\deg (g) \le  N}a_g\chi(g)\bigg|^{2} 
			 \ll  			
( q^{N+Q} + q^{2Q} )	\sum\limits_{\deg (g) \leq N}
			|a_{g}|^{2}. 
\end{align}
Our bound \eqref{mult bd} gives a savings of $q^Q$ in the first main term by comparison. This is necessary to obtain the  factor of $q^{-D}$ in the   modified form of the large sieve inequality stated in the next theorem.  This plays a  crucial role in  the proof of Theorem \ref{mainthm}.

	\begin{theorem}
		\label{large sieve inequality}let $(a_g)$ be a sequence of complex
		numbers and $D, Q, N \in \mathbb{N}$ . Then
		%Let $S(\chi)$ be the sum as defined in Theorem \ref{multiplicative bound}. Then
		\begin{equation}
			\sum\limits_{\substack{D< \deg (f) \leq Q\\ f\, \text{monic}}}
			\frac{1}{\phi(f)}
			 \sideset{}{^\ast}\sum \limits_{\chi}
			\bigg|\sum_{\deg (g) \leq N}a_g\chi(g)\bigg|^{2}
			\ll
			(q^{N-D}+q^{Q})
			\sum\limits_{\deg (g) \leq N}
			|a_{g}|^{2},
		\end{equation}
	where $\sideset{}{^\ast}\sum $ represents that the sum
	runs over primitive characters modulo $f$.
	\end{theorem}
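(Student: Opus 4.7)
The plan is to deduce Theorem \ref{large sieve inequality} from Theorem \ref{multiplicative bound} by a degree-by-degree decomposition of the outer sum over $f$. The key observation is that on each shell $\deg(f) = t$, the factor $|f| = q^t$ appearing on the left-hand side of \eqref{mult bd} is constant and can be divided out to produce the factor $1/\phi(f)$ required on the left of Theorem \ref{large sieve inequality}, at the cost of a factor of $q^{-t}$ on the right. This is exactly the gain stressed in the remark preceding the theorem: the improvement over the naive application of the Klurman--Mangerel--Ter\"av\"ainen bound comes from having $q^{2Q}$ (rather than $q^{N+Q}$) as the second term in \eqref{mult bd}.

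First, I would fix an integer $t$ with $D < t \leq Q$ and apply Theorem \ref{multiplicative bound} with $Q$ replaced by $t$. Since every summand is nonnegative, restricting the outer sum to monic $f$ of degree exactly $t$ only decreases the left-hand side, so
\[
\sum_{\substack{\deg(f) = t \\ f \text{ monic}}} \frac{q^t}{\phi(f)} \sideset{}{^\ast}\sum_{\chi} \bigg|\sum_{\deg(g) \leq N} a_g \chi(g)\bigg|^2 \ll (q^{N} + q^{2t}) \sum_{\deg(g) \leq N} |a_g|^2.
\]
Dividing through by $q^t$ (which is constant on this shell) yields the corresponding estimate with $1/\phi(f)$ on the left and $q^{N-t} + q^t$ on the right.

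Finally, I would sum this shell estimate over $t = D+1, D+2, \ldots, Q$. The resulting right-hand side is controlled by two geometric series, each dominated up to a factor depending only on $q$ by its extreme term: $\sum_{t=D+1}^{Q} q^{N-t} \ll q^{N-D}$ and $\sum_{t=D+1}^{Q} q^t \ll q^Q$. Combining these gives the bound $(q^{N-D} + q^Q)\sum |a_g|^2$, as claimed. No serious obstacle is anticipated; the proof is essentially a clean repackaging of Theorem \ref{multiplicative bound}, and the only point requiring attention is that \eqref{mult bd} must be applied at each degree level individually in order to extract the saving $q^{-t}$, which would be lost by a single application at the top level.
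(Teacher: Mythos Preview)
Your proposal is correct and follows essentially the same idea as the paper: both arguments decompose the outer sum by degree shells, use Theorem \ref{multiplicative bound} to bound the contribution at level $t$, divide out the constant factor $|f|=q^t$, and then control the resulting sums $\sum q^{N-t}$ and $\sum q^{t}$ by their extreme terms. The only cosmetic difference is that the paper packages this via Abel (partial) summation with the weight $\psi(t)=q^{-t}$, whereas you sum the shells directly; your version is if anything slightly cleaner.
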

	\begin{proof}
		Let $S(\chi)$ be as defined in \eqref{def:S}. Let $\psi : \mathbb{R} \rightarrow \mathbb{R}$ be the positive-valued decreasing continuous function defined as
		$	 \psi(t)={q^{-t}}$.	For any polynomial $f \in \F_{q}[t]$ and $t\in \mathbb{N}$, we define
		\begin{equation*}
			a_{f}:=\frac{|f|}{\phi(f)}
			 \sideset{}{^\ast}\sum \limits_{\chi\pmod* f}
			|S(\chi)|^{2}, \qquad a_{f}(t)=\sum\limits_{\substack{\deg (f)=t,\\f\, \text{monic}}}
			a_{f} \qquad \text{ and } \qquad	A(x):=\sum\limits_{\substack{t\leq x}}
			a_{f}(t).
		\end{equation*}
		Using partial summation we have,
		\begin{equation}\label{partial sum1}
			\sum\limits_{\substack{D<t\leq Q}}
			\psi(t)a_{f}(t)
			=
			\psi(Q)A(Q)-\psi(D)A(D)
			-\int\limits_{D}^{Q}
			A(t)\psi'(t) \, dt.
		\end{equation}		
		Note that the left-hand side of \eqref{partial sum1} is
		\begin{equation}\label{partial sum2}
%			\sum\limits_{\substack{D< t \leq Q}}
%			\sum\limits_{\substack{\deg (f)=t,\\f\, \text{monic}}}
%			\frac{1}{\phi(f)}
%			 \sideset{}{^\ast}\sum \limits_{\chi\pmod* f}
%			|S(\chi)|^{2}
%			=
			\sum\limits_{\substack{D< \deg (f) \leq Q\\f\, \text{monic}}}
			\frac{1}{\phi(f)}
			 \sideset{}{^\ast}\sum \limits_{\chi\pmod* f}
			|S(\chi)|^{2}.
		\end{equation}
%		Now, using binary series expansion it is easy to check that
%		\begin{align*}
%			q^{t}&=q^{t}
%			\left\lbrace 1+{t\choose 1}q^{-1}+...+q^{-t}\right\rbrace \\
%			&\ll {\color{red} q^{t}}.
%		\end{align*}
		From Theorem \ref{multiplicative bound} we have,
		\begin{equation*}
			A(x)\ll \left(q^{N}+q^{2x}\right)Z,
		\end{equation*}
		where $Z:=\sum\limits_{\deg (g) \leq N}
		|a_{g}|^{2}$.
		Therefore, the right-hand side of equation \eqref{partial sum1} is
		\begin{align}
%			\text{R.H.S} &\ll \nonumber
%			\frac{1}{q^{Q}}\bigg( q^{N}+2q^{2Q} \bigg)Z
%			+
%			\frac{1}{q^{D}}\bigg( q^{N}+2q^{2D} \bigg)Z
%			+Z\log q 
%			\int\limits_{D}^{Q}
%			\frac{1}{q^{t}}\bigg( q^{N}+2q^{2t} \bigg) dt\\ \nonumber
%			&=
%			\bigg( q^{N-Q}+2q^{Q} \bigg)Z
%			+
%			\bigg( q^{N-D}+2q^{D} \bigg)Z
%			+\bigg(-q^{N-Q}+q^{N-D}+2q^{Q}-2q^{D}\bigg)Z\\ 
			&\ll\left( q^{N-D}+q^{Q} \right)Z. \label{partial sum3}
		\end{align}	
		The result thus follows from \eqref{partial sum2} and \eqref{partial sum3}.\end{proof}	
	
	\section{Proof of Theorem \ref{mainthm}}
	\label{sec:Main theorem}
	Let $\psi_1$ and $\psi_2$ satisfy properties \eqref{cond:1}, \eqref{cond:2} and \eqref{cond:3}. Using Lemma \ref{lem:bound for tau2}, we have a constant $k$ such that $\psi_{1}(f)\ll \tau_{k}(f)$ and $\psi_{2}(f)\ll \tau_{k}(f)$. It is easy to derive property \eqref{cond:1} for the convolution $\psi_{1}\ast \psi_{2}$. In the classical case, it is well-known that the Bombieri-Vinogradov type bound implies that of Siegel-Walfisz type. For the sake of completeness, we present the $\F_q[t]$ analogue of this fact. Let us first assume that $\psi_{1}\ast\psi_{2}$ has property \eqref{cond:3} and derive property \eqref{cond:2} from that. 
	 
	Let $\chi$ be a non-principal character modulo $d$ having conductor $d'$ of degree $\leq D\log N$, for sufficiently large $D$. Let $\chi_1$ be a primitive character modulo $d'$ such that $\chi(f)=\chi_{0}(f)\chi_{1}(f)$ for all $f\in \F_q[t]$. Write $d=d'r$. We see that
	%\begin{align}
	%	\sum_{\beta \pmod*{d}}\chi(\beta)=\sum_{\beta}
	%\end{align}
	\begin{align*}
		\sum\limits_{\deg(f) = N}
		\psi_{1}\ast\psi_{2}(f)\chi(f)&=\sum_{\deg (f)=N}\psi_{1}\ast\psi_{2}(f)\chi_1(f)\chi_0(f)\\
		&=\sum_{\substack{\beta\pmod*{d'}\\ (\beta,r)=1}}\chi_1(\beta)\sum_{\substack{f\equiv \beta \pmod*{d'}\\ \deg (f) =N}}\psi_{1}\ast\psi_{2}(f).
	\end{align*}
	Using property \eqref{cond:3} with $A=4D$ for the inner sum and fundamental property of $\mu$, we derive that the above sum is,
	\begin{align*}
		&\frac{1}{\phi(d')}\sum_{\substack{\deg (f) =N\\ (f,d')=1}}\psi_{1}\ast\psi_{2}(f)\sum_{\substack{\beta\pmod*{d'}}}\chi_1(\beta)\sum_{\substack{k|\beta\\k|r}}\mu (k)+O\left(\frac{q^{N}|d'|}{N^{4D}}\right)\\
		&=\frac{1}{\phi(d')}\sum_{\substack{\deg (f) =N\\ (f,d')=1}}\psi_{1}\ast\psi_{2}(f)\sum_{\substack{k|r\\(k,d')=1}}\mu(k)\sum_{\substack{k\beta'\pmod*{d'}}}\chi_1(k\beta')+O\left(\frac{q^N}{N^{3D}}\right)\\
		&\ll \frac{q^N}{ N^{3D}},
	\end{align*}
	as the sum  $\displaystyle \sum_{k\beta\pmod*{d'}}\chi(k\beta)$ vanishes due to orthogonality property of Dirichlet characters.

	The remainder of this section is devoted to proving property \eqref{cond:3}. The crucial part here is judicious use of the large sieve inequality. Let $m$-monic and $l \in \F_{q}[t]$ with $(m,l)=1$.
	
	 From property \eqref{cond:3} of $\psi_{1}$ and $\psi_{2}$, for any fixed constant $A>0$, we have a constant $B=B(A)>0$ such that
	\begin{align}
		\label{error bound psi1}
		\sum\limits_{       \deg (m)\leq Q       }
		\max\limits_{M\leq N}
		\max\limits_{(m, l)=1}
		|E(M;m,l;\psi_1)|\ll
		\frac{q^{N}}{N^{A}}, 
	\end{align}
	and
	\begin{align}
		\label{error bound psi2}
		\sum\limits_{       \deg (m)\leq Q       }
		\max\limits_{M\leq N}
		\max\limits_{(m, l)=1}
		|E(M;m,l;\psi_2)|\ll
		\frac{q^{N}}{N^{A}}; 
	\end{align}
	where $Q=\frac{N}{2}-B\log N$.
	
%From property \eqref{cond:1} and Lemma \ref{lem:bound for tau2}, we get some $k>0$ such that $\psi_{1},\psi_{2}\ll \tau_{k}$. Using this, 
As $\psi_{1}(f),\psi_{2}(f)\ll \tau_{k}(f)$, we have $E(M;m,l;\psi_1\ast\psi_2)\ll q^N/N^{B-k+1}$ trivially for $M\leq \frac{N}{2}$. Thus, we only need to deal with $\frac{N}{2}<M\leq N$. Let $\frac{N}{2}<M\leq N$. By the definition in \eqref{def:error}, we have
	\begin{align}
		\label{mainexp}
		E(M;m,l;\psi_{1}\ast\psi_{2})
		=
		\sum\limits_{\substack{gh\equiv l \pmod* m\\ \d(gh)=M}}
		\psi_{1}(g)\psi_{2}(h)
		-\frac{1}{\phi(m)}
		\sum_{\substack{  (gh,m)=1 \\  \d(gh) = M  }}
		\psi_{1}(g)\psi_2(h).
	\end{align}
	We divide the range of summation over $\d(g)$ in both sums in \eqref{mainexp} into three sub-ranges as follows: (i) $\d(g)\leq A'\log N$, (ii)  $A'\log N<\d(g)\leq M-B'\log N$ and (iii) $M-B'\log N<\d(g) \leq M$;
	where $A', B'>0$ are suitable constants to be chosen later.
	Since $(gh, m)=1$, the contribution to \eqref{mainexp} from the range (i) is
	\begin{align*}
		&	\sum_{\substack{ \d(g)\leq A'\log N\\(g,m)=1}}\psi_{1}(g)\sum_{\substack{\d(h)\leq M-\d(g)\\h\equiv g^{-1}l\pmod* m}}\psi_{2}(h)-\frac{1}{\phi(m)}\sum_{\substack{\d(g) \leq A'\log N \\  (g,m)=1 }}\psi_{1}(g)\sum_{\substack{\d(h)\leq M-\d(g)\\(h,m)=1}}\psi_2(h),
	\end{align*}
	which can be re-written as
	\begin{align*}
		\sum_{\substack{ \d(g)\leq A'\log N\\(g,m)=1}}\psi_{1}(g)E(M-\d(g);m,g^{-1}l;\psi_2).
	\end{align*}
	Expressing the two other sums arising from the ranges (ii) and (iii) in the same way, \eqref{mainexp} can be written as
	\begin{align*}
		E(M;m,l;\psi_{1}\ast \psi_{2})&=\sum_{\substack{ \d(g)\leq A'\log N\\(g,m)=1}}\psi_{1}(g)E(M-\d(g);m,g^{-1}l;\psi_2)\\
		&+\sum_{\substack{ A'\log N<\d(g)\leq M-B'\log N \\(g,m)=1}}\psi_{1}(g)E(M-\d(g);m,g^{-1}l;\psi_2)\\
		&+\sum_{\substack{ \d(h)\leq B'\log N\\(h,m)=1}}\psi_{2}(h)E(M-\d(h);m,h^{-1}l;\psi_1)\\
		&:=S_1+S_2+S_3 \qquad \text{(say)}.
	\end{align*}
	On the sum $S_{1}$, using \eqref{error bound psi2} and Lemma \ref{bound for tau}(i), we obtain
	\begin{align*}
		%	\label{bound for S1}
		\sum_{\deg (m)\leq Q}\max\limits_{M\leq N}
		\max\limits_{(m, l)=1}S_1 
		&\ll \frac{q^N}{N^A}\sum_{\d(g)\leq A'\log N}\tau_{k}(g) \\
%		&=\frac{q^N}{N^A}
%		\sum\limits_{\gamma=1}^{A'\log N} 
%		\sum\limits_{\d(g)=\gamma} \tau_{k_{1}}(g)\\
		&\ll \frac{q^N}{N^A}(A'\log N)^{k-1}q^{A'\log N}.
	\end{align*}
	Now, considering the logarithm base $q$, we conclude that
	\begin{align}
		\sum_{\deg (m)\leq Q}\max\limits_{M\leq N}
		\max\limits_{(m, l)=1}S_1 
		& \ll \frac{q^N}{N^{A-A'-\epsilon}},
		\label{bound for S1}
	\end{align}
	for any $\epsilon>0$. Similarly, using \eqref{error bound psi1} and  Lemma \ref{bound for tau}(i) we obtain for any $\epsilon>0$,
	\begin{align}
		\sum_{\deg (m)\leq Q}\max\limits_{M\leq N}
		\max\limits_{(m, l)=1}S_3 
		& \ll \frac{q^N}{N^{A-A'-\epsilon}}.
		\label{bound for S3}
	\end{align}
	Next, we turn our attention to
	\begin{align*}
		S_4:=	\sum_{\deg (m)\leq Q}\max\limits_{M\leq N}
		\max\limits_{(m, l)=1}S_2.
	\end{align*}	
	Choosing $D_1=\frac{A'}{2}\log N$, we split
	the range of $\deg (m)$ and write $S_4$ into two subsums
	\begin{equation}
		S_4 = \sum_{\deg (m)\leq D_1}\max\limits_{M\leq N}
		\max\limits_{(m, l)=1}S_2
		+\sum_{D_1<\deg (m)\leq Q}\max\limits_{M\leq N}
		\max\limits_{(m, l)=1}S_2 \\
		=:T_1+T_2.\label{bound for S4}
	\end{equation}
	Now onwards, we use the notation $A_{1}:=A'\log N$ and $A_{2}:=M-B'\log N$. 	
	Using Lemma \ref{orthogonality}, we express $S_{2}$ as
	\begin{align}
		S_2&%=\sum_{\substack{ A'\log N<\d(g)\leq M-B'\log N \\(g,m)=1}}\psi_{1}(g)E(M-\d(g),m,g^{-1}l;\psi_2)\\
		=\frac{1}{\phi(m)}\sum_{\substack{\chi \pmod* m\\ \chi\neq \chi_0}}\overline{\chi}(l)\sum_{\substack{A_1<\d(g)\leq A_2}}\psi_{1}(g)\chi(g)\sum_{\substack{\d(h)\leq M-\d(g)}}\psi_{2}(h)\chi(h),\label{S2 with chi}
	\end{align}
	where $\chi_0$ denotes the principal character modulo $m$. Putting \eqref{S2 with chi} in $T_1$, we see that
	\begin{align*}
		T_1 \leq \sum_{\deg (m)\leq D_1}\frac{1}{\phi(m)}\max \limits_{M\leq N}\sum_{\substack{\chi \pmod* m\\ \chi\neq \chi_0}} \Bigg|~\sum_{\substack{A_1<\d(g)\leq A_2}}\psi_{1}(g)\chi(g)\sum_{\substack{\d(h)\leq M-\d(g)}}\psi_{2}(h)\chi(h)\Bigg|.
	\end{align*} 
	Now, for some $\delta \in (0,1)$, we can write the inner sum over $g$ and $h$ in \eqref{S2 with chi}, say $T_1'(\chi)$, as
	\begin{align*}
		T_1'(\chi)%&:=\sum_{\substack{A_1<\d(g)\leq A_2}}\psi_{1}(g)\chi(g)\sum_{\substack{\d(h)\leq M-\d(g)}}\psi_{2}(h)\chi(h)\\
%		&=\sum_{A_1<\d(g)\leq (1-\delta)M}\psi_{1}(g)\chi(g)\sum_{\d(h)\leq M-\d(g)}\psi_{2}(h)\chi(h)\\
%		&\qquad\qquad\qquad\qquad+\sum_{(1-\delta)M<\d(g)\leq A_2}\psi_{1}(g)\chi(g)\sum_{\d(h)\leq M-\d(g)}\psi_{2}(h)\chi(h)\\
		&=\sum_{A_1<\d(g)\leq (1-\delta)M}\psi_{1}(g)\chi(g)\sum_{\d(h)\leq M-\d(g)}\psi_{2}(h)\chi(h)\\
		&\qquad\qquad\qquad\qquad+\sum_{\d(h)\leq \delta M}\psi_{2}(h)\chi(h)\sum_{(1-\delta)M<\d(g)\leq \min{ (M-\d(h), A_2)}}\psi_{1}(g)\chi(g).
	\end{align*}
	Using property \eqref{cond:1} and \eqref{cond:2} for $\psi_{1}$ and $\psi_{2}$ and the bound from Lemma \ref{bound for tau}(ii), we have
	\begin{align}
%		&\ll \sum_{A_1<\d(g)\leq (1-\delta)M}\tau_{k_1}(g)\frac{q^{M-\d(g)}}{(M-\d(g))^{3A'/2}}+\sum_{\d(h)\leq \delta M}\tau_{k_2}(h)\frac{q^{M-\d(h)}}{(M-\d(h))^{3A'/2}}\\
	T_1'(\chi)	&\ll \frac{q^M}{ M^{3A'/2}}\left(\sum_{A_1<\d(g)\leq (1-\delta)M}\frac{\tau_{k}(g)}{q^{\d(g)}}+\sum_{\d(h)\leq \delta M}\frac{\tau_{k}(h)}{q^{\d(h)}}\right)\ll\frac{q^M}{M^{3A'/2-k}}.\label{bound for T'}
	\end{align}
	 Thus,
		\begin{align}
		\label{bound for T1} T_1\ll \frac{q^N}{N^{3A'/2-k}}\sum_{\deg (m)\leq D_1}1\ll\frac{q^N}{N^{A'-k}} .
	\end{align}	
	Now, to estimate $T_2$, we first write the sum \eqref{S2 with chi} in terms of primitive characters and separate the terms coming from characters having small conductor. Note that, any character $\chi$ modulo $m$ having conductor $m_1$ can be expressed as $\chi(g)=\chi_1(g)\chi_{0}(g)$
	for all $g \in \F_{q}[t]$, where $\chi_{0}$ is the principal character modulo $m$ and $\chi_1$ is a primitive character modulo $m_1$. 
	%we have a primitive character $\xi$ modulo $m_1$, where $m_1|m$ and $m_1$ is a conductor for $\chi$. 
	Let $m=m_1m_{2}$. Using the inequality $\phi(m_1m_{2})\geq \phi(m_1)\phi(m_{2})$, we obtain	
	\begin{align}
		T_2
		\leq 
		&	\nonumber
		\sum\limits_{\deg (m_{2})\leq Q} 
		\max\limits_{M\leq N}
		\max\limits_{(m_{2}, l)=1}
		\frac{1}{\phi(m_{2})}\left(
		 \sum_{\deg (m_1)\leq D_{1}}\max\limits_{(m_1, l)=1}
		 \frac{1}{\phi(m_1)}\Bigg|~ \sideset{}{^\ast}\sum\limits_{\chi \pmod* {m_1}}\overline{\chi}(l)T_1'(\chi)
		 \Bigg|\right.\\
		%\max\limits_{(m_1, l)=1}
		%\frac{1}{\phi(m_1)}\\
		&\left.+\sum_{D_1<\deg (m_1)\leq Q}\max\limits_{(m_1, l)=1}
		\frac{1}{\phi(m_1)}\Bigg|~ \sideset{}{^\ast}\sum\limits_{\chi \pmod* {m_1}}\overline{\chi}(l)T_1'(\chi)
		\Bigg|\right)\nonumber
		\\
%		&+
%		\sum\limits_{\deg (m_{2})\leq Q} 
%		\max\limits_{M\leq N}
%		\max\limits_{(m_{2}, l)=1}
%		\frac{1}{\phi(m_{2})}
%		\times
%		\nonumber
%		\\
%		&+
%		\Bigg|
%		\sum_{\xi \in \M^{\ast}(m_{1})}
%		\overline{\xi}(l)
%		\sum\limits_{\substack{A_1<\d(g)\leq A_2\\(m_{1}, g)=1}}
%		\psi_{1}(g)\xi(g)
%		\sum\limits_{\substack{\d(h)\leq M-\d(g)\\(m_{1}, h)=1}}
%		\psi_{2}(h)\xi(h)
%		\Bigg|
%		\nonumber\\
		&=:
		U_{1}+U_{2} \qquad \text{(say)},
		\label{bound for T2}
	\end{align}	
where $ \sum{\vphantom{\sum\limits_{\chi}}}^\ast$ denotes the sum is over primitive characters. The first sum $U_1$ can be bounded using the above upper bound for $T_1'(\chi)$ in \eqref{bound for T'} and Lemma \ref{bound for phi} as follows.
	\begin{align}
		%\label{bound on U1}
	U_{1}
		&\ll 
		\sum_{\deg (m_{2})\leq Q}
		\max\limits_{M\leq N}
		\frac{1}{\phi(m_{2})}
		\sum_{\deg (m_{1})\leq D_{1}}
		\frac{q^M}{M^{3A'/2-k}} \nonumber\\
%		&\ll \frac{q^{N}}{N^{3A'/2-k}}\sum_{\deg (m_{2})\leq Q}
%		\frac{1}{\phi(m_{2})}
%		q^{D_{1}}	\\
		&\ll \frac{q^{N}}{N^{A'-k-1}}.	\label{final bound for U1}
		%\ll \frac{q^{N}}{N^{A'-k-1}}
	\end{align}
	Next, we define $\widetilde{\psi}_{1}(f)=\psi_{1}(f)$ and $\widetilde{\psi}_{2}(f)=\psi_{2}(f)$ if $(f,m_{2})=1$, and $0$ else. Thus,
	\begin{align*}
		U_2=
		&\sum\limits_{\deg (m_{2})\leq Q} 
		\max\limits_{M\leq N}
		\max\limits_{(m_{2}, l)=1}
		\frac{1}{\phi(m_{2})}
		\sum\limits_{D_1<\deg (m_1)\leq Q}
		\max\limits_{(m_1, l)=1}
		\frac{1}{\phi(m_1)}\\
		&\qquad\qquad\qquad \Bigg|~
		 \sideset{}{^\ast}\sum\limits_{\chi \pmod* {m_1}}
		\overline{\xi}(l)
		\sum\limits_{A_1<\d(g)\leq A_2}
		\widetilde{\psi}_{1}(g)\xi(g)
		\sum\limits_{\d(h)\leq M-\d(g)}
		\widetilde{\psi}_{2}(h)\xi(h)
		\Bigg|. 
	\end{align*}	
%	Now, for a fixed $M$, we write the inner sum 
%	\begin{align*}
%		\sum_{A_1<\d(g)\leq A_2}\widetilde{\psi}_{1}(g)\xi(g)\sum_{\d(h)\leq M-\d(g)}\widetilde{\psi}_{2}(h)\xi(h)&=\sum_{\d(gh)\leq M}\widetilde{\psi}_{1}'(g)\widetilde{\psi}_{2}(h)\xi(gh)\\
%		&=\sum_{\d(f)\leq M} \widetilde{\psi}_1'\xi \ast \widetilde{\psi}_2\xi(f),
%	\end{align*}
%	by defining 
%	\begin{equation*}
%		\widetilde{\psi}_1'(g)
%		=
%		\begin{cases}
%			\widetilde{\psi}_1(g),\, \text{if} \, A_1<\d(g)\leq A_2,\\
%			0, \, \text{otherwise}.
%		\end{cases}
%	\end{equation*}
	Now, let $M_0=M+\frac{1}{2}$, $\sigma=1+\frac{1}{2N}$ and $T>M$ to be specified later. Note that $\tau_k(f)\ll |f|^\epsilon$, for any $\epsilon>0$. Thus, applying the function field analogue of Perron’s formula given in Lemma \ref{Perron} on $U_{2}$, using Lemma \ref{bound for phi} to simplify the error, we obtain
	\begin{align}
		U_2
		\leq	
		&\sum\limits_{\deg (m_{2})\leq Q}
		\max\limits_{M\leq N}
		\max\limits_{(m_{2}, l)=1}
		\frac{1}{\phi(m_{2})}
		\sum\limits_{D_1<\deg (m_1)\leq Q}
		\max\limits_{(m_1, l)=1}
		\frac{1}{\phi(m_1)}\nonumber\\
		&\qquad\qquad\Bigg| ~\sideset{}{^\ast}\sum\limits_{\chi \pmod* {m_1}}
		\overline{\xi}(l)
		\int_{\sigma-iq^T}^{\sigma+iq^T}
		\widetilde{\Phi}_1(\xi,s)\widetilde{\Phi}_2(\xi,s)\frac{q^{M_0s}}{s} ds
		\Bigg|  
		+ O
		\left( 
		\frac{ q^{\frac{3N}{2}}}    {q^{T}N^{B-1}}
		\right),
		\label{bound for U2}
		%	  O\left( 
		%	 \frac{ q^{(\sigma+\epsilon)Q}}    {q^{T}}
		%	  \right), 
	\end{align}
	where
	\begin{equation*}
		\widetilde{\Phi}_1(\xi,s):=\sum_{A_1<\d(g)\leq A_2}\frac{\widetilde{\psi}_1(g)\xi(g)}{|g|^{s}}\qquad \text{ and }\qquad
		\widetilde{\Phi}_2(\xi,s):=\sum_{\d(h)\leq T}\frac{\widetilde{\psi}_2(h)\xi(h)}{|h|^{s}}.
	\end{equation*}
%	and
%	\begin{align*}
%		\mathbf{Error} 
%		&=
%		O\left( 
%		\sum\limits_{\deg (m_{2})\leq Q} 
%		\max\limits_{M\leq N}
%		\max\limits_{(m_{2}, l)=1}
%		\frac{1}{\phi(m_{2})}
%		\sum\limits_{D_1<\deg (m_1)\leq Q}
%		\max\limits_{(m_1, l)=1}
%		\frac{1}{\phi(m_1)}
%		\phi(m_{1})
%		\frac{ q^{\sigma M_{0}}}    {q^{T}}
%		\right) .
%	\end{align*}
%	Using Lemma \ref{bound for phi}, and putting $Q=\frac{N}{2}-B\log N$, we obtain
%	\begin{equation}
%		\label{error}
%		\mathbf{Error}
%		=
%		O
%		\left( 
%		Q \frac{ q^{N+Q}  }  {q^{T}  }
%		\right) 
%		=
%		O
%		\left( 
%		\frac{ q^{\frac{3N}{2}}}    {q^{T}N^{B'}}
%		\right) .
%	\end{equation}
	We define
	\begin{align*}
		I_{M,m_2}(l)
		&:=	
		\sum\limits_{D_1<\deg (m_1)\leq Q}
		\max\limits_{(m_1, l)=1}
		\frac{1}{\phi(m_1)}
		\Bigg|~		 \sideset{}{^\ast}\sum\limits_{\chi \pmod* {m_1}}
		\overline{\xi}(l)
		\int_{\sigma-iq^T}^{\sigma+iq^T}
		\widetilde{\Phi}_1(\xi,s)
		\widetilde{\Phi}_2(\xi,s)\frac{q^{M_0s}}{s} ds
		\Bigg| .
	\end{align*}	
	Now, we divide the sum over $\deg (m_{1})$ in the form
	\begin{align*}
		I_{M,m_2}(l)
		=\sum\limits_{j=0}^{J}I_{M,m_2}^{(j)}(l),%+I_{M,m_2}^{(J')},
	\end{align*}
	where $J+D_1 < Q \leq J+1+D_1$ and 
	\begin{align*}
		I_{M,m_2}^{(j)}(l)
		= \sum\limits_{\deg (m_1)\in R_j}
		\max\limits_{(m_1, l)=1}
		\frac{1}{\phi(m_1)}
		\Bigg|~	 \sideset{}{^\ast}\sum_{\xi \pmod* {m_1}}
		\overline{\xi}(l)
		\int_{\sigma-iq^T}^{\sigma+iq^T}
		\widetilde{\Phi}_1(\xi,s)
		\widetilde{\Phi}_2(\xi,s)\frac{q^{M_0s}}{s} ds
		\Bigg|.
	\end{align*}	
	Here $R_j$ denotes the range $ D_1+j<\deg (m_1) \leq j+1+D_1$ for each $j=0,.., (J-1)$, and $J+D_1<\deg (m_1)\leq Q$ at $j=J$. Also, we divide the integral $I_{M,m_2}$ into two sub parts by writing $\widetilde{\Phi}_2(\xi, s) $ as
	\begin{align*}
		\widetilde{\Phi}_2{(\xi, s)}
		&=\sum\limits_{ h\leq Y_j}
		\frac{\psi_2(h)\xi(h)}{|h|^s}
		+\sum\limits_{Y_j< h\leq T}
		\frac{\psi_2(h)\xi(h)}{|h|^s}\\
		&:=
		\widetilde{\Phi}_{2j}^{(1)}{(\xi, s)}
		+\widetilde{\Phi}_{2j}^{(2)}{(\xi, s)},
	\end{align*}
	where $Y_j<T$ will be chosen later. Now, for each $j$, we have
	\begin{align*}
		&\int_{\sigma-iq^T}^{\sigma+iq^T}
		\widetilde{\Phi}_1(\xi,s)
		\widetilde{\Phi}_{2j}^{(2)}(\xi,s)
		\frac{q^{M_0s}}{s} ds
		-
		\int_{1/2-iq^T}^{1/2+iq^T}
		\widetilde{\Phi}_1(\xi,s)
		\widetilde{\Phi}_{2j}^{(2)}(\xi,s)
		\frac{q^{M_0s}}{s} ds\\
		%	&=
		%	\int_{\sigma-iq^T}^{\sigma+iq^T}
		%	\widetilde{\Phi}_1(\xi,s)
		%	\widetilde{\Phi}_{2j}^{(2)}(\xi,s)
		%	\frac{q^{M_0s}}{s} ds\\
		&=
		\int_{\sigma-iq^T}^{1/2-iq^T}
		\widetilde{\Phi}_1(\xi,s)
		\widetilde{\Phi}_{2j}^{(2)}(\xi,s)\frac{q^{M_0s}}{s} ds
		+
		\int_{1/2+iq^T}^{\sigma+iq^T}
		\widetilde{\Phi}_1(\xi,s)
		\widetilde{\Phi}_{2j}^{(2)}(\xi,s)
		\frac{q^{M_0s}}{s} ds\\
%		&\ll \frac{q^{M_0}}{q^T}
%		\left(
%		\sum_{A_1<\d g\leq A_2}
%		\frac{\tau_{k}(g)}{|g|^{1/2}}
%		\right)
%		\left(
%		\sum_{\d h\leq Y_j}
%		\frac{\tau_{k}(h)}{|h|^{1/2}}
%		\right)\\
%		&=
%		\frac{q^{M_0}}{q^T}
%		\left(
%		\sum_{A_1<\gamma\leq A_2}
%		\sum\limits_{\d g =\gamma}
%		\frac{\tau_{k}(g)}{|g|^{1/2}}
%		\right)
%		\left(
%		\sum_{\gamma \leq Y_j}
%		\sum_{\d h = \gamma}
%		\frac{\tau_{k}(h)}{|h|^{1/2}}
%		\right)\\
		&\ll
		\frac{q^{M_0}}{q^T}
		\left(
		\sum_{A_1<\gamma\leq A_2}
		q^{-\gamma/2}
		\sum\limits_{\d g =\gamma}
		\tau_{k}(g)
		\right)
		\left(
		\sum_{\gamma \leq Y_j}
		q^{-\gamma/2}
		\sum_{\d h = \gamma}
		\tau_{k}(h)
		\right)\\
%		&\ll \frac{q^N}{q^T}
%		\left(
%		q^{A_2/2}(A_2)^k
%		\right)
%		\left(
%		q^{Y_j/2}(Y_j)^k
%		\right)\\
		&\ll \frac{q^{3N/2}q^{Y_j/2}(Y_j)^k(A_2)^k}{q^T N^{B'/2}},
	\end{align*}
	using Lemma \ref{bound for tau}(i) and keeping in mind that $A_{2}=M-B' \log N\leq N$. Therefore, for each $j$ we can write
	\begin{align}\label{integration for j}
		I_{M,m_2}^{(j)}(l)
		=\int_{1/2-iq^T}^{1/2+iq^T}S_j^{(1)}(s)
		\frac{q^{M_0s}}{s}ds
		+\int_{\sigma-iq^T}^{\sigma+iq^T}S_j^{(2)}(s)
		\frac{q^{M_0s}}{s}ds
		+O\left(
		\frac{q^{3N/2}q^{Y_j/2}(Y_j)^k(A_2)^k}{q^T N^{B'/2}}
		\right),
	\end{align}
	where
	\begin{align}
		S_j^{(\alpha)}:
		=\sum_{\deg (m_1) \in R_j}
		\max\limits_{(m_1, l)=1}
		\frac{c}{\phi(m_1)}
			 \sideset{}{^\ast}\sum_{\xi \pmod* {m_1}}
		\overline{\xi}(l)
		\widetilde{\Phi}_1(\xi,s)
		\widetilde{\Phi}_{2j}^{(k)}(\xi,s),
	\end{align}
	for $\alpha=1,2 $ and some complex number $c$ with $|c|=1$. Now, using the Cauchy-Schwarz inequality first for the sum over the primitive characters $\chi$ and then for the sum over $m_1$, we obtain
	\begin{align*}
		S_j^{(1)}(s)
		\leq &
		\left(
		\sum_{\deg (m_1) \in R_j}
		\frac{1}{\phi(m_1)}
			 \sideset{}{^\ast}\sum_{\xi \pmod* {m_1}}
		|\widetilde{\Phi}_1(\xi,s)|^2
		\right)^{1/2}\\
		&\qquad\qquad\qquad\qquad	
		\times
		\left(
		\sum_{\deg (m_1) \in R_j}
		\frac{1}{\phi(m_1)}
			 \sideset{}{^\ast}\sum_{\xi \pmod* {m_1}}
		|\widetilde{\Phi}_{2j}^{(1)}(\xi,s)|^2
		\right)^{1/2}.
	\end{align*}	
	Next, applying the large sieve inequality as in Theorem \ref{large sieve inequality} for $s=1/2+it$ such that $-q^{T} \leq t \leq q^{T}$, and Lemma \ref{bound for tau}(ii), we have for $j<J$,
	\begin{align*}
		S_j^{(1)}(s)
		&\ll 
		\left(
		\left(
		q^{D_1+j+1}+q^{A_2-D_1-j}
		\right)
		\sum_{\deg (g) \leq A_2}
		\frac{\tau_{k}(g)}{|g|}
		\right)^{1/2}
		\left(
		\left(
		q^{D_1+j+1}+q^{Y_j-D_1-j}
		\right)
		\sum_{\deg (h) \leq Y_j}\frac{\tau_{k}(h)}{|h|}
		\right)^{1/2}\\
		&\ll 
		\left(
		q^{2Q}+q^{Y_j+1}+q^{A_2+1}+q^{A_2+Y_j-2D_1-2j}
		\right)^{1/2}
		A_{2}^{k/2}Y_{j}^{k/2}.
	\end{align*}
	Similarly, for $j=J$ also, we can get the same upper bound. Choose $Y_j=2D_1+2j$. Thus,
	\begin{align}
		S_j^{(1)}(1/2+it)\ll\frac{q^{N/2}}{N^{\frac{1}{2}\min \{2B,B'\}-k}}.\label{bound for Sj1}
	\end{align}
	Now, we turn to $S_j^{(2)}(s)$ at $s=\sigma +it$, such that $-q^{T} \leq t \leq q^{T}$. We divide the sums $\widetilde{\Phi}_{1}(\xi,s)$ and $\widetilde{\Phi}_{2j}^{(2)}(\xi, s)$ as follows. Define for $A_1+I<A_2\leq A_1+I+1$,
	\begin{align*}
		\widetilde{\Phi}_{1}(\xi,s)=\sum_{i=0}^{I}	\widetilde{\Phi}_{1i}(\xi,s),
	\end{align*}
	where $	\displaystyle\widetilde{\Phi}_{1i}(\xi,s)
	:=\sum_{A_1+i<\deg (g) \leq A_1+i+1}
	\frac{\widetilde{\psi}_1(g)\xi(g)}      {|g|^{s}}$; 
 and for  $Y_j+R<T\leq Y_j+R+1$,
	\begin{align*}
		\widetilde{\Phi}_{2j}^{(2)}(\xi,s)=\sum_{r=0}^{R}	\widetilde{\Phi}_{2jr}^{(2)}(\xi, s),
	\end{align*}
	where $\displaystyle	\widetilde{\Phi}_{2jr}^{(2)}(\xi,s)
	:=
	\sum_{Y_j+r<\deg (h) \leq Y_j+r+1}
	\frac{\widetilde{\psi}_2(g)\xi(g)}{|g|^{s}}$.\\			
	Thus, we can write $S_j^{(2)}(s)$ as
	\begin{align*}
		S_j^{(2)}(s)
		=
		\sum_{i=0}^{I}
		\sum_{r=0}^{R}
		\sum_{\deg (m_1)\in R_j}
		\frac{c}{\phi(m_1)}
			 \sideset{}{^\ast}\sum_{\chi \pmod* {m_1}}
		\overline{\chi_1}(l)
		\widetilde{\Phi}_{1i}(\xi,s)
		\widetilde{\Phi}_{2jr}^{(2)}(\xi,s).
	\end{align*}
	Applying the Cauchy-Schwarz inequality twice as in $S_j^{(1)}(s)$, we obtain that $S_{j}^{(2)}(s)$ is bounded above by
	\begin{align*}
		\sum_{i=0}^{I}\sum_{r=0}^{R}
		\left(\!
		\sum_{\deg (m_1)\in R_j}\!
		\frac{1}{\phi(m_1)}
		 \sideset{}{^\ast}\sum_{\xi \pmod* {m_1}}
		\left|
		\widetilde{\Phi}_{1i}(\xi,s)
		\right|
		\right)^{1/2}
		\left(\!
		\sum_{\deg (m_1)\in R_j}
		\frac{1}{\phi(m_1)}\!
			 \sideset{}{^\ast}\sum_{\xi \pmod* {m_1}}
		\left|
		\widetilde{\Phi}_{2jr}^{(2)}(\xi,s)
		\right|
		\right)^{1/2}\!.
	\end{align*}
Again applying the large sieve inequality from Theorem \ref{large sieve inequality} for $s=\sigma +it$ and Lemma \ref{bound for tau}(ii), we have for $j<J$ and for some $k'>0$,
	\begin{align}
		S_j^{(2)}(s)
		\ll 
		\sum_{i=0}^{I}
		\sum_{r=0}^{R}
		&\left(
		\left(
		q^{(A_1+i+1)-(D_1+j)}+q^{D_1+j+1}
		\right)
		\sum_{A_1+i<\deg(g)\leq A_1+i+1}
		\frac{\tau_{k'}(g)}{|g|^{2}} 
		\right)^{1/2}\nonumber \\
		&
		\left(
		\left(q^{(Y_j+r+1)-(D_1+j)}
		+q^{D_1+j+1}
		\right)
		\sum_{Y_j+r<\deg(h)
			\leq Y_j+r+1}
		\frac{\tau_{k'}(h)}{|g|^{2}} 
		\right)^{1/2}\nonumber\\
		\ll 
		\sum_{i=0}^{I}
		\sum_{r=0}^{R}
		&\left(
		\left(
		q^{(A_1+i+1)-(D_1+j)}
		+q^{D_1+j+1}
		\right)
		\frac{(A_1+i+1)^{k'}}{q^{A_1+i}} 
		\right)^{1/2}\nonumber\\
		&
		\left(
		\left(
		q^{(Y_j+r+1)-(D_1+j)}
		+q^{D_1+j+1}
		\right)
		\frac{(Y_j+r+1)^{k'}}{q^{Y_j+r}} 
		\right)^{1/2}
		\nonumber\\
		\ll 
		\sum_{i=0}^{I}
		\sum_{r=0}^{R}
		&\left(
		q^{-D_1-j}+q^{j-D_1-i}
		\right)^{1/2}
		\left(q^{-D_1-j}
		+q^{-D_1-j-r}
		\right)^{1/2}
		(A_2)^{k'/2}
		(T)^{k'/2},
		\label{bound for Sj2}
		%	&\label{bound for sj2}
		%	\ll q^{-2D_1}N^{k'+2}
		%	=N^{k'+2-A'/2},
	\end{align}
	because of our choices $A_1=2D_1$ and $Y_j=2D_1+2j$. We choose $T=2N$. Since the lengths of the sums over $i$ and $r$ are $O(A_{2})$ and $O(T)$ respectively, and the summand is maximum at $i=0$ and $r=0$, from \eqref{bound for Sj2}, we obtain
	%Using these bounds \eqref{bound for Sj1} and \eqref{bound for sj2}, we finally derive that
	\begin{equation}
		\label{final bound for Sj2}
		S_j^{(2)}(s)
		\ll
		q^{-D_1}N^{k'+2}
		=N^{k'+2-A'/2},
	\end{equation}
	putting the value $D_{1}=\frac{A'}{2}\log N$.
	Therefore, using \eqref{bound for Sj1}  and \eqref{final bound for Sj2} in \eqref{integration for j}, for each $j$ we have
	\begin{align*}
		I_{M,m_2}^{(j)}(l)
		\ll 
		\frac{q^{N}}{N^{\frac{1}{2}\min \{2B,B'\}-k}}+ \frac{q^N}{N^{A'/2-k'-2}},
	\end{align*}
	and hence 
	\begin{align}
		I_{M,m_2}(l)
	%	&\ll
	%	\sum\limits_{j=0}^{J}
	%	\left( \frac{q^{N}}{N^{\frac{1}{2}\min \{2B,B'\}-k}}+ \frac{q^N}{N^{A'/2-k'-2}}
		%\right) \\
		\ll
		\frac{q^{N}}{N^{\frac{1}{2}\min \{2B,B'\}-k-1}}+ \frac{q^N}{N^{A'/2-k'-3}},
		\label{final integral}
	\end{align}
	since the length of the sum over $j$ is $\ll Q \ll N$. 
	%	Modifying the argument as in \cite{Davenport}[pg.??] analogously, we can derive the bound $\sum_{\deg (m_2)\leq Q}\frac{1}{\phi(m_2)}\ll Q$. Using this we have
	Using Lemma \ref{bound for phi}, and putting \eqref{final integral} in \eqref{bound for U2}, we obtain 
	\begin{align}
		U_2\ll  \frac{q^{N}}{N^{\frac{1}{2}\min \{2B,B'\}-k-2}}+ \frac{q^N}{N^{A'/2-k'-4}}.
		\label{final bound for U2}
	\end{align}  
	Therefore, using \eqref{final bound for U1} and \eqref{final bound for U2} in \eqref{bound for T2}, we have
	\begin{equation}
		T_2
		\ll
		\frac{q^{N}}{N^{A'-k-1}}
		+
		\frac{q^{N}}{N^{\frac{1}{2}\min \{2B,B'\}-k-2}}+ \frac{q^N}{N^{A'/2-k'-4}}.\label{final bound for T2}
	\end{equation} 
	Combining \eqref{bound for T1} and \eqref{final bound for T2} in \eqref{bound for S4}, we get
	\begin{equation}\label{final bound for S4}
		S_{4} \ll
		\frac{q^N}{N^{A'-k}}
		+
		\frac{q^{N}}{N^{A'-k-1}}
		+
		\frac{q^{N}}{N^{\frac{1}{2}\min \{2B,B'\}-k-2}}+ \frac{q^N}{N^{A'/2-k'-4}}.
	\end{equation}
	Suppose $k''=\max\left\lbrace k,k'\right\rbrace $ and  we choose $A'=B'=A/2$.
	Combining all the bounds \eqref{bound for S1}, \eqref{bound for S3} and \eqref{final bound for S4}, we have for $A>8k''+32$,
	%by choosing $A'=B'=A/2$ and $A>8k'+32$,
	\begin{align*}
		\sum\limits_{\deg (m)\leq Q} \max\limits_{M\leq N}\max\limits_{(m, l)=1}
		|E(M;m,l;\psi_1\ast \psi_2)|\ll
		\frac{q^{N}}{N^{A/8}}.
	\end{align*}
Choosing $B=B(8A)$, we get the required form. Thus we have derived property \eqref{cond:3} for $\psi_{1}\ast\psi_{2}$.

	\section{Proof of Corollary \ref{result for tau_k}} 	\label{sec:result for tau_k}
Consider $\psi\equiv 1$. Fix some $m, \ell \in \F_{q}[t]$ with $(m,\ell)=1$, $\deg (\ell)<\deg (m)$. We have
	\begin{align}
		\sum_{\substack{f\equiv \ell \pmod m\\ \deg (f)=M}}\psi(f)=\sum_{\substack{f=md+\ell \\ \deg (f)=M}}1=\sum_{\deg (d)=M-\deg(m)}1=q^{M-\deg (m)},\label{a*}
	\end{align}
	and 
	\begin{align}
		\sum_{\substack{  (f,m)=1 \\  \deg(f) = M  }}\psi(f) &= \sum_{\deg (f)=M}\sum_{d|f, d|m}\mu(d)\nonumber\\
		&=\sum_{d|m}\mu(d)\sum_{\substack{d|f\\ \deg(f)=M}}1 \nonumber\\
		&=\sum_{d|m}\mu(d)q^{M-\deg(d)}=q^M\prod_{P|m}\left(1-\frac{1}{|P|}\right)=\phi(m)q^{M-\deg (m)}\label{b*}
	\end{align}
	from \eqref{a*} and \eqref{b*}, we have that $\psi$ has property \eqref{cond:3}, and \eqref{cond:1}, \eqref{cond:2} trivially follows. Since $\tau_k=\psi\ast \psi\ast... \psi$ ($k$ times), applying Corollary \ref{induction result}, we get our result.

%	
%	\section{Proof of Corollary \ref{prime indicator function}} 
%	\label{sec:Prime indicator function}
%	We only need to show that the indicator function for primes $\mathbbm{1}_{\mathcal{P}}$ satisfies all the three properties \eqref{cond:1}, \eqref{cond:2} and \eqref{cond:3}. Property \eqref{cond:1} is trivial. Properties \eqref{cond:2} and \eqref{cond:3} follows from the prime number theorem for arithmetic progression \eqref{PNTforAP}.
% Also, Property \eqref{cond:3}, i.e., the Bombieri-Vinogradov theorem for $\F_q[t]$ follows directly from \eqref{PNTforAP}.

\section{Application to the Titchmarsh divisor problem}
\label{sec:Titchmarsh problem}
As an application of our main result for the prime indicator function, we first prove Theorem \ref{Titchmarsh problem} in this section.
\begin{proof}[Proof on Theorem \ref{Titchmarsh problem}]
 For a fixed $a\in \F_{q}[t]$, consider the sum
\begin{equation}
	\label{sum S}
	S:=\sum\limits_{\substack{P_{1}, P_{2} \\ \deg(P_{1}P_{2})=N}}
	\tau(P_{1}P_{2}-a),
\end{equation}
where $P_i$ represents a monic irreducible polynomial in $\F_{q}[t]$. Let $N$ be large enough so that $\deg (a)<N$. % Throughout the proof we use $P_{i}$'s to denote the elements in $\mathcal{P}$ without mentioning this explicitly. 
We split the sum $S$ as follows.

\begin{align*}
	S&=\sum\limits_{\substack{\deg(P_{1}P_{2})=N}}
	\sum\limits_{d|(P_{1}P_{2}-a)} 1\\
	&=\sum\limits_{\substack{\deg(d) \leq N\\(d,a)=1}} \sum\limits_{\substack{\deg(P_{1}P_{2})=N\\ P_{1}P_{2} \equiv a	\pmod*{d}}}
	1+\sum\limits_{\substack{\deg(d) \leq N\\ (d,a)>1}} \sum\limits_{\substack{\deg(P_{1}P_{2})=N\\ P_{1}P_{2} \equiv a	\pmod*{d}}}
	1\\
	&=:S'+S''  \qquad \text{(say)}.
	\end{align*}
%Observe that for $(a,d)>1$, an arithmetic progression $a\pmod*{d}$ can contain more than one element of the form $P_1P_2$ of degree $N$ only if $(a,d)=P$ for some prime $P$ and  $\deg (d)<N$, and at most one prime for other cases. Thus, we can bound $S''$ using Lemma \eqref{Brun-Titchmarsh} followed by the identity in Lemma \ref{bound for phi} for $\deg (d)<N$ and trivial bound for the other case.
Let us first consider the sum $S''$. We can reduce the sum over $d$ as follows.
\begin{align*}
	S''=&\sum\limits_{\substack{\deg(P_{1}P_{2})=N}}
	\sum\limits_{\substack{d|(P_{1}P_{2}-a)\\ \deg(d)\leq \frac{N}{2}, (d,a)>1}}
	1
	+
	\sum\limits_{\substack{\deg(P_{1}P_{2})=N}}
	\sideset{}{'}\sum\limits_{\substack{d|(P_{1}P_{2}-a)\\ \deg(d)> \frac{N}{2},(d,a)>1}}
	1\\
	\leq&2\sum\limits_{\substack{ \deg(d)\leq \frac{N}{2}\\ (d,a)>1}}\sum\limits_{\substack{\deg(P_{1}P_{2})=N\\ P_1P_2\equiv a\pmod*d}}
	1
	\end{align*}
Observe that the inner sum is non trivial only when the gcd $(d,a)$ is a prime. Therefore, this can be bounded using \eqref{PNTforAP} and Lemma \ref{bound for phi} as
\begin{align*}
	S''\leq &2\sum_{P|a}\sum_{\substack{\deg (d)\leq \frac{N}{2}\\(d,a)=P}}~\sum_{\substack{\deg (P_1)=N-\deg (P)\\P_1\equiv \frac{a}{P}\pmod*{\frac{d}{P}}}}1 \nonumber\\
	\ll&\sum_{P|a}\frac{q^{N-\deg (P)}}{N-\deg(P)}\sum_{\deg \left(\frac{d}{P}\right)\leq \frac{N}{2}-\deg (P)}\frac{1}{\phi\left(\frac{d}{P}\right)}\ll q^N.\end{align*}
Therefore, 
\begin{align}
	S=S'+O\left(q^N\right).\label{Bound for S''}
\end{align}
We now consider $S'$ and split the sum in $S'$ as follows.
\begin{align}
S'	=&\sum\limits_{\substack{\deg(P_{1}P_{2})=N}}
	\sideset{}{'}\sum\limits_{\substack{d|(P_{1}P_{2}-a)\\ \deg(d)<\frac{N}{2}}}
	1
	+
	\sum\limits_{\substack{\deg(P_{1}P_{2})=N}}
	\sideset{}{'}\sum\limits_{\substack{d|(P_{1}P_{2}-a)\\ \deg(d)> \frac{N}{2}}}
	1
	+
	\sum\limits_{\substack{\deg(P_{1}P_{2})=N}}
	\sideset{}{'}\sum\limits_{\substack{d|(P_{1}P_{2}-a)\\ \deg(d)= \frac{N}{2}}}1\nonumber\\
%\end{align}
%Note that, in the second term on the right-hand side, the condition $d|(P_{1}P_{2}-a)$ implies $P_{1}P_{2}-a=dk$ for some $k \in \F_{q}[t]$. Thus, $\deg(d)> \frac{N}{2}$ implies $\deg(k)< \frac{N}{2}$, and hence we can write
%\begin{align}
	=& 2\sum\limits_{\substack{\deg(P_{1}P_{2})=N}} 
	\sideset{}{'}\sum\limits_{\substack{d|(P_{1}P_{2}-a)\\ \deg(d)<\frac{N}{2}}}
	1
	+
	\sum\limits_{\substack{\deg(P_{1}P_{2})=N}}
	\sideset{}{'}\sum\limits_{\substack{d|(P_{1}P_{2}-a)\\ \deg(d)=\frac{N}{2}}}
	1\nonumber\\
	=:& 2S_{1}+S_{2} \qquad \text{(say)}.\label{S}
\end{align}
%{\color{red} Now, the sum
	%\begin{align*}
	%	\sum\limits_{\substack{\deg(P_{1}P_{2})=N}}
	%	\sum\limits_{\substack{d|(P_{1}P_{2}-1)\\ \deg(d)=\frac{N}{2}}}
	%	1
	%	=
	%	\sum\limits_{\substack{(P_{1}P_{2}-1)=dk\\ \deg(d)=\frac{N}{2}\\\d(k)=\frac{N}{2}}}
	%	1
	%	\ll
	%	q^{n}. need q^n/n
	%\end{align*}
	%Interchanging the order of summation in \eqref{sum1}, we have
	%\begin{align}
	%	\label{sum2}
	%	S=& 
	%	2\sum\limits_{\deg(d)<\frac{N}{2}}  
	%	\sum\limits_{\substack{\deg(P_{1}P_{2})=N\\ P_{1}P_{2} \equiv a	\pmod*{d}}}
	%	1
	%	+
	%	O(q^{n}).
	%\end{align}}
	Here and now onwards, $ \sideset{}{'}\sum$ represents that the sum is restricted to $d$ with $(d,a)=1$. Let $Q:=\frac{N}{2}-B\log N$, for some $B>0$ to be chosen later. We split the range of $\deg (d)$ in $S_1$ into two parts, which gives
	\begin{align}
		S_1
		=& \nonumber
		\sum\limits_{\substack{\deg(d) \leq Q\\(d,a)=1}}  
		\sum\limits_{\substack{\deg(P_{1}P_{2})=N\\ P_{1}P_{2} \equiv a	\pmod*{d}}}
		1+	\sum\limits_{\substack{Q<\deg(d)< \frac{N}{2}\\(d,a)=1}}  
		\sum\limits_{\substack{\deg(P_{1}P_{2})=N\\ P_{1}P_{2} \equiv a	\pmod*{d}}}
		1
		\\
		=:&
		\, S_{11}+S_{12} \qquad \text{(say)}. \label{sum in two parts}
	\end{align}
	First, we concentrate on the sum $S_{11}$. We have
	\begin{align*}
	S_{11}=&	 \sideset{}{'}\sum\limits_{\substack{\deg(d) \leq Q}} \sum\limits_{\substack{\deg(P_{1}P_{2})=N\\ P_{1}P_{2} \equiv a	\pmod*{d}}}
		1\\
		=&	 \sideset{}{'}\sum\limits_{\substack{\deg(d) \leq Q}}\sum\limits_{\substack{\deg(f)=N\\
				f \equiv a\pmod* d}} 
		\mathbbm{1}_{\mathcal{P}} \ast \mathbbm{1}_{\mathcal{P}} (f)\\
		=&
		 \sideset{}{'}\sum\limits_{\substack{\deg(d) \leq Q}}\Bigg(\frac{1}{\phi(d)} 
		\sum\limits_{\substack{\deg(f)=N\\
				(f,\,d)=1 }} 
		\mathbbm{1}_{\mathcal{P}} \ast \mathbbm{1}_{\mathcal{P}} (f)
		+
		E(N; d, a; \mathbbm{1}_{\mathcal{P}} \ast \mathbbm{1}_{\mathcal{P} })\Bigg),
	\end{align*}
where $E(N; d, a; \mathbbm{1}_{\mathcal{P}} \ast \mathbbm{1}_{\mathcal{P} })$ is as defined in \eqref{def:error}. Therefore,
	\begin{align*}
		S_{11}= &
			 \sideset{}{'}\sum\limits_{\substack{\deg(d) \leq Q}}  
		\frac{1}{\phi(d)} 
		\sum\limits_{\substack{\deg(P_{1}P_{2})=N\\ (P_{1}P_{2},\,d)=1 }}
		1
		+
			 \sideset{}{'}\sum\limits_{\substack{\deg(d) \leq Q}}
		E(N; d, a; \mathbbm{1}_{\mathcal{P}} \ast \mathbbm{1}_{\mathcal{P} }).
		%	=&
		%	\sum\limits_{\deg(d) \leq Q}  
		%	\frac{1}{\phi(d)} 
		%	\sum\limits_{\substack{\deg(P_{1}P_{2})=n\\ (P_{1},P_{2})=1 }}
		%	1
		%	+
		%	O_{A}\left( 
		%	\frac{q^{n}}{n^{A}}
		%	\right) 
	\end{align*} 
	Let $A=1$ in Corollary \ref{prime indicator function} and choose $B=B(1)$. Thus we have
	\begin{align*}
		 \sideset{}{'}\sum\limits_{\substack{\deg(d) \leq Q}}  
		E(N; d, 1; \mathbbm{1}_{\mathcal{P}} \ast \mathbbm{1}_{\mathcal{P} })
		=&
		O\left( 
		\frac{q^{N}}{N}
		\right), 
	\end{align*}
	Using this bound we further write the sum as
	\begin{align*}
		S_{11}
		=&
		S_{11}^1 -S_{11}^2 + O\left( 
		\frac{q^{N}}{N}
		\right),  
	\end{align*}
	where 
	\begin{align*}
		S_{11}^1
		:=&
			 \sideset{}{'}\sum\limits_{\substack{\deg(d) \leq Q}}  
		\frac{1}{\phi(d)} 
		\sum\limits_{\substack{\deg(P_{1}P_{2})=N}}
		1
		\qquad
		\text{and} \qquad
		S_{11}^2
		:=&	 \sideset{}{'}\sum\limits_{\substack{\deg(d) \leq Q}} 
		\frac{1}{\phi(d)} 
		\sum\limits_{\substack{\deg(P_{1}P_{2})=N\\ (P_{1}P_{2},\, d)\neq 1 }}
		1.
	\end{align*}
	%Therefore,
	%\begin{align*}
	%	S_{11}^{2}
	%	\ll &
	%	Q \frac{q^{N-1}}{N}
	%	\sum\limits_{k=1}^{N-1}
	%	\left( 
	%	\frac{1}{k}
	%	+
	%	\frac{1}{N-k}
	%	\right) \\
	%	\ll &
	%	Q \frac{q^{N-1}}{N} \log N.
	%\end{align*}
	%using the partial summation. That gives 
	%\begin{equation*}
	%	S_{1,2} \le 	S_{1,2}^{1}  \ll 
	%	Q \frac{q^{N-1}}{N} \log n.
	%\end{equation*}
	The main contribution to our sum comes from $S_{11}^1$. We determine this as follows.
	\begin{align}
		S_{11}^1
		%=&
%			 \sideset{}{'}\sum\limits_{\substack{\deg(d) \leq Q}}
%		\frac{1}{\phi(d)} 
%		\sum\limits_{\substack{\deg(P_{1}P_{2})=N}}
%		1 \nonumber \\
		=& \sideset{}{'}\sum\limits_{\substack{\deg(d) \leq Q}}	\frac{1}{\phi(d)}\sum\limits_{k=1}^{N-1}
		\sum\limits_{ \deg(P_{1})=k} 
		1
		\sum\limits_{ \deg(P_{2})=N-k} 1.
				\label{Bound for S11}
	\end{align}
	Using \eqref{PNT} to compute the inner sum, we get,
	\begin{align}
		\sum\limits_{k=1}^{N-1}
		\sum\limits_{ \deg(P_{1})=k} 
		1
		\sum\limits_{ \deg(P_{2})=N-k} 1=&\sum\limits_{k>\log N}^{N-\log N}
		\sum\limits_{ \deg(P_{1})=N-k} 
		1
		\sum\limits_{ \deg(P_{2})=k}1+		
	2\sum\limits_{k=1}^{\log N}
		\sum\limits_{ \deg(P_{1})=N-k} 
		1
		\sum\limits_{ \deg(P_{2})=k} 
		1
		\nonumber\\
		=&
		\sum\limits_{k>\log N}^{N-\log N}
		\frac{q^{k}}{k}
		\left(
		1+O\left(  q^{-\frac{k}{2}}   \right) 
		\right) 
		\frac{q^{N-k}}{N-k}
		\left(
		1+O\left(  q^{-\frac{N-k}{2}}   \right) 
		\right)\nonumber\\
		+&O\left(\frac{q^{N}}{N}\log\log N\right) \nonumber\\
		=&\frac{q^{N}}{N}
		\sum\limits_{k>\log N}^{N-\log N}
		\left( 
		\frac{1}{k}
		+
		\frac{1}{N-k}
		\right)
		\left(
		1+O\left(  q^{-\frac{k}{2}}   \right) 
		+ O\left(  q^{-\frac{N-k}{2}}   \right)
		\right)\nonumber\\
		+&O\left(\frac{q^{N}}{N}\log\log N\right)\nonumber\\
		=&\frac{2q^{N}}{N}
		\left( 
		\log N+O\left(\log\log N\right)
		\right) 
		\left( 
		1+ O\left( q^{-\frac{\log N}{2}}   \right) 
		\right)+O\left(\frac{q^{N}}{N}\log\log N\right) \nonumber\\
		=&
		\frac{2q^{N}}{N}\log N
		+
		O\left( 
		\frac{q^{N}\log N}{N^{\frac{3}{2}}}
		\right)+O\left(\frac{q^N\log\log N}{N}\right).\label{Bound for S11-1}
	\end{align}
	Substituting \eqref{Bound for S11-1} into \eqref{Bound for S11} and using Lemma \ref{bound for phi} for the fixed polynomial $a$, we obtain
	\begin{align*}
		S_{11}^1
		=&
		\left( C_a\frac{\zeta_q(2)\zeta_q(3)}{\zeta_q(6)}Q+O(1)\right)
		\left( 
		\frac{2q^{N}}{N}\log N
		+
		O\left( 
		\frac{q^{N}\log N}{N^{\frac{3}{2}}}
		\right)+O\left(\frac{q^N\log\log N}{N}\right)
		\right) \\
		=&
		2C_a\frac{\zeta_q(2)\zeta_q(3)}{\zeta_q(6)}\frac{q^{N}}{N} Q(\log N)
	+O\left(q^{N} \log\log N \right),
	\end{align*}
where $C_a$ is as in \eqref{def: Ca}. Now, putting $Q=\frac{N}{2}-B\log N$, we have
	\begin{align}
		S_{11}^1
		=&
		C_a\frac{\zeta_q(2)\zeta_q(3)}{\zeta_q(6)}q^{N}\log N 
	+O\left(q^{N} \log\log N \right).\label{S11^1}
	\end{align}
	We will now show that the contribution from $S_{11}^{2}$ is negligible. If $(P_{1}P_{2},\, d)\neq 1$, we have $P_{1}|d$ or $P_{2}|d$. Since both the cases are symmetric, we have
\begin{align*}
	S_{11}^2
	\ll&
	\sum\limits_{\deg(d) \leq Q}  
	\frac{1}{\phi(d)} 
	\sum\limits_{\substack{\deg(P_{1}P_{2})=N\\ P_{1}|d }}
	1.
\end{align*}
By writing $d=P_{1}^{m}k$ such that $(P_{1}, k)=1$, we see that
\begin{align*}
	S_{11}^2
	%=&
	%	\sum\limits_{ \deg(P_{1}P_{2})=N}  
	%	\sum\limits_{\substack{\deg(d) \leq Q\\ P_{1}|d }}
	%	\frac{1}{\phi (d)}\\
	\ll &
	\sum\limits_{ \deg(P_{1}P_{2})=N}  
	\sum\limits_{m\geq 1}
	\frac{1}{\phi(P_{1}^{m})} 
	\sum\limits_{\deg(k)\leq Q}
	\frac{1}{\phi(k)}.
\end{align*}
Further, using Lemma \ref{bound for phi} with $g=1$, the innermost sum is $\ll Q$. Using the bound $\sum\limits_{m\geq 1}
\frac{1}{\phi(P_{1}^{m})}\ll\frac{1}{|P_1|}$ and \eqref{PNT}, we have
\begin{align}
	S_{11}^{2}\ll&
	2Q\sum\limits_{k=1}^{\log N}
	\sum\limits_{ \deg(P_{1})=k} 
	\frac{1}{|P_{1}|}
	\sum\limits_{ \deg(P_{2})=N-k} 
	1+Q\sum\limits_{k>\log N}^{N-\log N}
	\sum\limits_{ \deg(P_{1})=N-k} 
	\frac{1}{|P_{1}|}
	\sum\limits_{ \deg(P_{2})=k}
	1\nonumber\\
	\ll& N \log\log N+\frac{q^{N}}{N}\log N\ll \frac{q^{N}}{N}\log N\label{S11^2}.
\end{align}
	%Further using $\log (N-1)=\log N + O(1/N)$, we obtain
	%\begin{equation}
	%	S_{1,1}
	%	=
	%	\frac{\zeta(2)\zeta(3)}{\zeta(6)}	q^{N}\log N
	%	+
	%	O(q^{N}).
	%\end{equation}
	Therefore, from \eqref{S11^1} and \eqref{S11^2}, we have
	\begin{align}
		S_{11}
		=&
		C_a\frac{\zeta_q(2)\zeta_q(3)}{\zeta_q(6)}q^{N}\log N
		+O\left(q^{N} \log\log N \right).\label{S11}
	\end{align}
	Next, we estimate the sum $S_{12}$ by using the analogue of the Brun-Titchmarsh inequality over $\F_{q}[t]$ due to Hsu as stated in  Lemma \ref{Brun-Titchmarsh}. Recall that
	\begin{align*}
		S_{12}=	\sideset{}{'} \sum\limits_{Q<\deg(d)< \frac{N}{2}}  
		\sum\limits_{\substack{\deg(P_{1}P_{2})=N\\ P_{1}P_{2} \equiv a	\pmod*{d}}}
		1.
	\end{align*}
	From Lemma \ref{Brun-Titchmarsh} and \eqref{PNT}, we see that
	\begin{align}
		S_{12}=	& \sideset{}{'}\sum\limits_{\substack{Q<\deg(d)< \frac{N}{2}}}\left(\sum\limits_{k\leq \frac{N}{2}}
		\sum\limits_{ \substack{\deg(P_{1})=k\\ (P_1,d)=1}} 
		\sum\limits_{\substack{\deg(P_{2})=N-k\\ P_2\equiv aP_1^{-1}\pmod*{d}}} 
		1+\sum\limits_{k<\frac{N}{2}}
		\sum\limits_{ \substack{\deg(P_{2})=k\\ (P_2,d)=1}} 
		\sum\limits_{\substack{\deg(P_{1})=N-k\\ P_1\equiv aP_2^{-1}\pmod*{d}}} 
		1\right)\nonumber\\
		\ll&\sum\limits_{\substack{Q<\deg(d)< \frac{N}{2}}}\sum\limits_{k\leq \frac{N}{2}}
		\left(\frac{q^{k}}{k}\right)\left(\frac{q^{N-k}}{\phi(d)(N-k+\deg d+1)}\right)\ll_B \frac{q^N}{N}(\log N)^2,\label{S12}
	\end{align}
	where $aP_i^{-1}$ is the residue class modulo $d$ such that $P_iP_i^{-1}\equiv a\pmod*{d}$. Here we have used Lemma \ref{bound for phi} to write the final bound. Using \eqref{S11} and \eqref{S12}, we have from \eqref{sum in two parts} that
		\begin{align}
		S_{1}
		=&
	C_a	\frac{\zeta_q(2)\zeta_q(3)}{\zeta_q(6)}q^{N}\log N
		+
		O\left( 
		q^{N}
		\log\log N
		\right).\label{S1}
	\end{align}
	It remains to estimate $S_2$. Again we split the sum as 
	\begin{align*}
		S_2=&2\sum_{\substack{\deg d=\frac{N}{2}\\(d,a)=1}} \sum\limits_{k<\frac{N}{2}}
		\sum\limits_{ \substack{\deg(P_{1})=k\\ (P_1,d)=1}} 
		\sum\limits_{\substack{\deg(P_{2})=N-k\\ P_2\equiv aP_1^{-1}\pmod*{d}}} 1
		%	1+\sum\limits_{k<\frac{N}{2}}
		%	\sum\limits_{ \substack{\deg(P_{2})=k\\ (P_2,d)=1}} 
		%	\sum\limits_{\substack{\deg(P_{1})=N-k\\ P_1\equiv P_2^{-1}\pmod*{d}}} 
		%	1
		+\sum_{\substack{\deg d=\frac{N}{2}\\(d,a)=1}}\sum\limits_{ \substack{\deg(P_{2})=\frac{N}{2}\\ (P_2,d)=1}} 
		\sum\limits_{\substack{\deg(P_{1})=\frac{N}{2}\\ P_1\equiv aP_2^{-1}\pmod*{d}}} 
		1.
		%&\ll_B\frac{q^N}{N}(\log N)^2+\sum_{\deg d=\frac{N}{2}}\sum\limits_{ \substack{\deg(P_{2})=\frac{N}{2}\\ (P_2,d)=1}} 1	
	\end{align*}
	The first term in the above expression is $ \ll_Bq^N(\log N)^2/N$ as done in \eqref{S12}. For the second term, observe that the arithmetic progression $aP_2^{-1}\pmod*{d}$ can contain atmost one prime of degree $\frac{N}{2}$. Indeed, for a general term $aP_2^{-1}+fd$ in the arithmetic progression, $f$ must have degree zero as $\deg (d)=\frac{N}{2}$. Since we are counting monics, $f$ must be $1$. Thus, using \eqref{PNT},we have that the second term above is $\ll q^N/N$. Thus, we have 
	\begin{align}
		S_2\ll_B\frac{q^N}{N}(\log N)^2.\label{S2}
	\end{align}
	Using \eqref{S1} and \eqref{S2} in \eqref{S} and \eqref{Bound for S''} completes the proof of Theorem \ref{Titchmarsh problem}.
\end{proof}
	
	\begin{proof}[Proof of Theorem \ref{q>inf version}]
		Proof follows by taking $q\rightarrow\infty$ Theorem \ref{Titchmarsh problem} with slight modification. Note that for $q\rightarrow\infty$, we have $S"\ll q^{N-1}$. Also, from equation \eqref{Bound for S11-1} we have
		\begin{align*}
			\sum_{k=1}^{N-1}\sum_{\deg P_1=k}1\sum_{\deg P_2=N-k}1&=\sum_{k=1}^{N-1}\frac{q^k}{k}\left(1+O\left(q^{-k/2}\right)\right)\frac{q^{N-k}}{N-k}\left(1+O\left(q^{-(N-k)/2}\right)\right)\\
			&=\frac{2q^N}{N}\sum_{k=1}^{N-1}\frac{1}{k}\left(1+O\left(q^{-1/2}\right)\right)\\
			&=\frac{2q^N}{N}(\log N+\gamma)+O\left(\frac{q^N}{N^2}\right)+O\left(\frac{q^{N-\frac{1}{2}}}{N}\log N\right),
		\end{align*}
		as $q\rightarrow\infty$. Therefore,
		\begin{align}
			S_{11}^1=q^N(\log N +\gamma)+O\left(\frac{q^N}{N}(\log N)^2\right)+O\left(q^{N-\frac{1}{2}}\log N\right),
		\end{align}
		as the constant in Lemma \ref{bound for phi} tends to $0$ as $q\rightarrow \infty$. Considering $q\rightarrow\infty$ in rest of the argument, we derive the result.
	\end{proof}

	\noindent
	{\textbf{Acknowledgments.}} 
	Both the authors express their sincere gratitude to Prof. Akshaa Vatwani for suggesting the problem that led to this work, as well as for her insightful discussions and suggestions on the relevant references. The second author is very thankful to Prof. M. Ram Murty for his lectures on ``Arithmetic in function fields"	at IIT Gandhinagar which were instrumental in motivating this project. The funding from the MHRD SPARC project SPARC/2018 -2019/P567/SL, under which these lectures were organized is gratefully acknowledged.
	
	\bibliographystyle{amsplain}
\bibliography{Induction_principle_for_BV.bib}	
\end{document}